\providecommand\@dotsep{5}\def\listtodoname{List of Todos}\def\listoftodos{\hypersetup{linkcolor=black}\@starttoc{tdo}\listtodoname\hypersetup{linkcolor=blue}}\makeatother
\newcommand{\bfx}{\boldsymbol x}
\newcommand{\bfbeta}{\boldsymbol \beta}
\newcommand{\ele}{S}
\numberwithin{equation}{section}
\newcommand{\jump}[1]{[\![#1]\!]}
\begin{document}
\title{Weighted error estimates for transient transport problems
  discretized using continuous finite elements with interior penalty
  stabilization on the gradient jumps
}

\titlerunning{Weighted error estimates for transient transport problems
  discretized with CIP-FEM}        

\author{Erik Burman
}


\institute{Erik Burman \at
Department of Mathematics, 
University College London, Gower Street, London, 
UK--WC1E  6BT, 
United Kingdom. \\
              \email{e.burman@ucl.ac.uk}           
}

\date{Received: date / Accepted: date}

\maketitle

\begin{abstract}
In this paper we consider the semi-discretization in space of a first
order scalar transport equation. For the space discretization we use
standard continuous finite elements with a stabilization consisting of
a 
penalty on the jump of the gradient over element faces.
We recall some global error estimates for smooth and rough solutions
and then prove a new local error estimate for the transient linear
transport equation. In particular we show that in the stabilized method the effect of
non-smooth features in the solution decay exponentially from the space
time zone where the solution is rough so that smooth features will be
transported unperturbed. Locally the $L^2$-norm error converges with the expected
order $O(h^{k+\frac12})$, if the exact solution is locally smooth. We then illustrate the results
numerically. In particular we show the good local accuracy in the
smooth zone of the stabilized method and that the standard Galerkin fails to
approximate a solution that is smooth at the final time if
underresolved features have been present in the solution at some time during
the evolution.
\keywords{Continuous Galerkin \and Stability \and Scalar hyperbolic
transport equations \and Initial-boundary value problem
 \and Stabilized methods}
\end{abstract}

\section{Introduction}
\label{intro}
The discretization of transport problems has traditionally been dominated by
discontinuous Galerkin methods or finite volume methods, typically of
low order, since the continuous Galerkin method is known to have robustness
problems for first order partial differential equations (see
\cite[Chapter 5]{EG04}), or convection--diffusion
equations in the convection dominated regime. In certain situations
the use of high order continuous Galerkin methods is appealing, for
instance in the case of convection--diffusion equations, in particular
where the diffusion is nonlinear, or more complex situations such as large eddy simulation of turbulent flows, where
the pressure-velocity coupling can be decoupled using a pressure
projection method and the convective part handled explicitly. In
such situations, if continuous finite element spaces are used,
one must resort to a stabilized method to avoid a
  reduction of accuracy due to spurious oscillations. There is a very wide
literature on stabilized methods and for an overview of the topic
see for example \cite{EG21}. In the high order case, the
Spectral Vanishing Velocity method has been a popular choice
\cite{MT89,KK00,MAPS20}, but
other methods have also been designed to work in the high order case,
see the discussion in \cite{BQS08}.
In this work we will focus on the continuous interior penalty 
(CIP) stabilization, that was shown to allow for
close to $hp$-optimal error estimates in the high Peclet regime in
\cite{BE07}.
Recently \cite{MCBS21} this method was applied to under resolved simulations of
turbulent flows using high order polynomial approximation and shown to perform very well in this
context. Therein an eigenanalysis was performed which showed that the
CIP finite element method has similar advantageous dispersion
properties as the discontinuous Galerkin method (see also the report
\cite{CDBS20}) and in the
computations it was verified that its numerical dissipation was less
important than that of the spectral vanishing viscosity. 

Ideally stability of
the finite element method should match that of the continuous problem.
This is typically, by and large, true for elliptic pde, but much harder to achieve
in the hyperbolic case. Indeed, this would mean satisfaction
of a discrete maximum principle and stability and error estimates in
$L^1$. Both which typically remain open questions. Herein we will only
consider the stability in the $L^2$-norm for continuous finite element
approximations and linear symmetric stabilization of gradient penalty
type applied to the transient
scalar, linear first order equation. The analysis will mainly focus on semi discretization in space
on periodic domains, but the extension to the fully discrete case and
weakly imposed boundary conditions will be sketched.
The classical estimate for smooth solutions that is proven for
stabilized finite element methods is on the form
\begin{equation}\label{eq:error_stab}
\|(u- u_h)(\cdot,T)\|_\Omega \leq C(u) h^{k+{\frac12}}
\end{equation}
where $C(u)$ is a constant that depends on Sobolev norms of the exact
solution and on equation data, $h$ is the mesh-size and $k$ the
polynomial order. This estimate that is suboptimal by
  $h^{\frac12}$ is known to be sharp on general meshes \cite{PS95}
  (see also \cite{Bu05} for the sharpness of the estimate for the CIP method).
The continuous Galerkin method without stabilization, however,
only admits a bound of order $h^k$. The lost factor $h^{\frac12}$ is of
little consequence for smooth solutions, and high polynomial
order. However for low polynomial order or rough solutions it becomes
significant.
In section \ref{sec:error} below, we prove this type of error
estimate and some variations in weak norm for rough solutions. This
analysis uses ideas from \cite{BEF10,Bu14}.
Some remarks on the time discretization will
be added in subsection \ref{subsec:time}. In particular we will point out the situations
where the stabilization actually improves the stability of time stepping methods.

The estimate \eqref{eq:error_stab} is a weak result,
but it
 has become a proxy for stronger estimates
that give convergence also of the material derivative (see
\cite{Guer01,BG20} and Theorem \ref{thm:error_bounds} below) and importantly,
local estimates, using weighted norms, well known in the stationary
case \cite{JNP84,JSW87,Guz06,BGL09}. In the context of
time dependent problems such a weighted estimate takes the form
\begin{equation}\label{eq:weighted}
\|\varpi (u - u_h)(\cdot,T)\|_{\Omega} \leq C h^{k+\frac12} \left(\int_0^T
  \|\varpi D^{k+1} u\|_\Omega^2 ~\mbox{d} t \right)^{\frac12},
\end{equation}
where $D$ is a multi-index differential operator and the $\varpi$ is a weight function that is aligned with the characteristics
and decays exponentially away
from some zone of interest. This means that if $\varpi = 1$ in some zone
where the solution is smooth the influence of locally large
derivatives and underresolution at some
distance $d$ from this zone will be damped with a factor
$e^{-d/\sqrt{h}}$. 
We prove such an estimate in section \ref{sec:weight} for the space
semi-discretized stabilized formulation. To the best of my knowledge
there are no previous such estimates for continuous finite element
methods using symmetric stabilization. For earlier works on Streamline
Upwind Petrov-Galerkin methods (SUPG) in this
direction see \cite{Zhou95,FGN16}. The approach in \cite{Zhou95} relies strongly on
the space time finite element discretisation and an additional
artificial viscosity term and in \cite{FGN16} the
authors consider the SUPG method together with a first order backward
differentiation in time, on a form that can not easily be
extended to higher order time-discretizations. In neither case can the
arguments be applied independently of the time discretization. In this paper we apply the ideas
from \cite{BGL09} where weighted estimates were proved for the stationary
convection--diffusion equation with CIP-stabilization and \cite{BNO20},
where they were applied to an inverse boundary value problem subject
to a convection--diffusion equation. The result is presented for the
semi-discretized case only, but can be extended to standard stable time
discretizations. 
The results can also be extended to the case of
convection--diffusion equations with Neumann conditions on
the outflow boundary, by straightforward addition of the diffusive
terms and following the argument of \cite{BGL09}. 

In the numerical
section (section \ref{sec:numerics}) we will illustrate this localization
property of the error and show that
it is not shared by the standard (unstabilized) Galerkin finite element
method. Indeed, as we shall see, without stabilization Galerkin FEM fails to
approximate even smooth solutions satisfactory in case the solution
has had non-smooth features at any time during the computation. Indeed
it appears that the standard Galerkin method does not propagate
underresolved features of the solution
with the right speed, making it impossible for the
method to evacuate high frequency content from the computational
domain. For the stabilized
method on the other hand the weighted estimate \eqref{eq:weighted} guarantees that
smooth components of the solution are untainted by spurious high
frequency content at all times, since perturbations are damped
exponentially when crossing the characteristics.

\section{Model problem and finite element discretization}\label{sec:model}
We will discuss a first order hyperbolic problem in a periodic domain,
$\Omega = [-L,L]^n$, where $n \ge 1$ is the space dimension. Let
$\bfbeta \in C^0([0,T];[C^m(\bar \Omega)]^n)$,
$m\ge 1$, be a periodic vector field
satisfying $\nabla \cdot \bfbeta=0$ and consider
the first order hyperbolic problem
\begin{alignat}{1}\label{eq:1st_order}
\mathcal{L} u := \partial_t u + \bfbeta \cdot \nabla u & = f \quad\mbox{ in }
{(0,T) \times \Omega} \\
u(\cdot,0) & = u_0 \quad \mbox{ in } \Omega.
\end{alignat}
For smooth data $\bfbeta$, $u_0$ and $f$ there exists a unique solution by the
method of characteristics, but the problem admits a unique solution
also for more rough data \cite{GS10}. The solution satisfies the following regularity estimate
(a proof of this can be obtained after minor modifications of \cite[Lemma 2]{BOG20}),
\begin{equation}\label{eq:regularity}
\|u(t)\|_{H^j(\Omega)}
\leq C_\beta (\|f\|_{L^2((0,T);H^j(\Omega))} + \|u_0\|_{H^j(\Omega)}),
\quad t>0, \quad j \ge 0 \mbox{ when $m\ge j$}.
\end{equation}
Below we will always assume that $\bfbeta$ is smooth enough for
\eqref{eq:regularity} to hold.
The constant $C_{\beta}$ grows exponentially in time, with coefficient
dependent on the sup-norm of $\bfbeta$, and its derivatives of order up
to $j$. 
Below the notation $\beta_\infty = \sup_{x \in \bar \Omega}
|\bfbeta(x)|$ will be used. The $L^2$-norm over a domain $X \subset \Omega$ will be denoted by
$\|\cdot\|_X = (\cdot,\cdot)_X^{\frac12}$, where
$(\cdot,\cdot)_X^{\frac12}$ is the $L^2$-scalar product over $X$, also
$\|\cdot\|_{\infty}$ will denote the norm on $C^0(\bar \Omega)$.

Let $\{\mathcal{T}\}_h$ be a family of shape regular decomposition of
$\Omega$ in simplices $S$, 
$\mathcal{T} = \{ \ele \}$,
indexed by the (uniform) mesh size $h$. Let $\mathcal{F}$ denote
the set of faces of $\mathcal{T}$. $C$ will denote a generic constant that can have
different value at each appearance, but is always independent of the
mesh-parameter $h$.
Now define the finite element space
\[
V_h := \{v \in H_{per}^1(\Omega): v\vert_{\ele} \in \mathbb{P}_k(\ele), \mbox{
  for all } \ele \in \mathcal{T}\}
\]
where $\mathbb{P}_k(\ele)$ denotes the set of polynomials of degree less
than or equal to $k$ on $\ele$ and $H^1_{per}(\Omega)$ denotes the set of
periodic functions in $H^1$ on $\Omega$. We may then write a
semi-discretization in space, for $t>0$ find $u_h(t) \in V_h$, with
$u_h(0) = \pi_h u_0$, such
that
\begin{equation}\label{eq:standard_Galerkin}
(\mathcal{L} u_h(t),v_h)_\Omega =F(v_h), \quad \forall v_h \in V_h
\end{equation}
where $F(v_h) :=
(f,v_h)_\Omega$. Above $\pi_h$ denotes the $L^2$-projection onto
the finite element space $V_h$. For all $v \in L^2(\Omega)$, $\pi_h v
\in V_h$ satisfies
\[
(\pi_h v,w_h)_\Omega = (v,w_h)_\Omega, \forall w_h \in V_h.
\]
It is
well known that on locally quasi-uniform meshes the $L^2$-projection
satisfies the approximation bound,
\begin{equation}\label{eq:L2approx}
\|v - \pi_h v\|_\Omega + h \|\nabla (v - \pi_h v) \|_\Omega \leq C
h^{k+1} |v|_{H^{k+1}(\Omega)}, \quad \forall v \in H^{k+1}(\Omega).
\end{equation}

The formulation \eqref{eq:standard_Galerkin} defines a dynamical
system that admits a unique solution for $m \ge 0$ using standard techniques.
 Taking $v_h = u_h$ in \eqref{eq:standard_Galerkin}
and integrating in time
we see that \eqref{eq:standard_Galerkin} satisfies the bound
\eqref{eq:regularity} with $j=0$
\begin{equation}\label{eq:regularity_FEM}
\|u_h(t)\|_{\Omega}
\leq C_\beta \|f\|_{L^2((0,T);\Omega)} + \|u_0\|_{\Omega}, \quad t>0
\end{equation}
Since $\nabla \cdot \bfbeta = 0$ the bound holds with $C_\beta =
T^{\frac12}$.
Actually a stronger results holds for the $L^2$-norm when the norm on
$f$ is weakened. Indeed one may use that
\[
\int_0^T (f,u_h)_\Omega ~\mbox{d}t  + \|u_0\|_{\Omega}^2\leq \sup_{t \in (0,T)} \|u_h(t)\|_{\Omega} (\|f\|_{L^1((0,T);L^2(\Omega))}+\|u_0\|_{\Omega})
\]
to show that
\[
\sup_{t \in (0,T)} \|u_h(t)\|_{\Omega} \leq \|f\|_{L^1((0,T);L^2(\Omega))} + \|u_0\|_{\Omega}.
\]
However \eqref{eq:regularity} does not hold
for $u_h$ for $j=1$. A natural question to ask is then if the solution
to \eqref{eq:standard_Galerkin} gives any control of the
derivatives. In case $f \in L^2((0,T);\Omega)$ the immediate control
offered by \eqref{eq:1st_order} is $\mathcal{L} u\in
L^2((0,T);\Omega)$, that is the material derivative is bounded in
$L^2$. For \eqref{eq:standard_Galerkin} we get the corresponding 
bound $\pi_h \mathcal{L} u_h\in
L^2((0,T);\Omega)$. Since $\mathcal{L} u_h$ may be discontinuous over element
faces (due to the presence of derivatives in space) and $V_h \in C^0(\Omega)$, we see that $\pi_h \mathcal{L} u_h
\ne \mathcal{L} u_h $. It follows that not even this weakest
measure of derivatives of $u$ is controlled by
\eqref{eq:standard_Galerkin}. However since we are looking for control
in a discrete space we can use norm equivalence on discrete spaces in
the form of the inverse inequality \cite[Lemma 4.5.3]{BS08},
\begin{equation}\label{eq:inverse_ineq}
\|\nabla u_h\|_{\ele} \leq C h^{-1}
\|u_h\|_{\ele}
\end{equation}
 and observing that $\partial_t u_h \in V_h$, we see that
\begin{equation}\label{eq:Lstab}
\|\mathcal{L} u_h\|_\Omega \leq \|\pi_h \mathcal{L} u_h \|_\Omega +
C\beta_\infty h^{-1} \|u_h\|_\Omega.
\end{equation}
Combining \eqref{eq:Lstab} with the bound
\eqref{eq:regularity_FEM} 
\[
\|\mathcal{L} u_h\|_{L^2((0,T);\Omega)} \leq (1+C_\beta h^{-1}) (\|f\|_{L^2((0,T);\Omega)} + \|u_0\|_{\Omega}).
\]
So the constant in the control of the material derivative grows as
$O(h^{-1})$ under mesh refinement. Hence there is no improvement
compared to obtaining an $H^1$ estimate by combining the
$L^2$-stability of \eqref{eq:regularity_FEM} with \eqref{eq:inverse_ineq}.

The rationale for the addition of stabilized terms is to improve the
control of derivatives of $u_h$. As an example of stabilization terms
we here propose the gradient penalty term, introduced in \cite{DD76}
and shown to result in improved robustness and error estimates for convection dominated
flows in \cite{BH04},
\begin{equation}\label{eq:grad_jump}
s(w_h,v_h) = \sum_{F \in \mathcal{F}} \left<h_F^2 |\bfbeta|
\jump{\nabla u_h} ,\jump{\nabla v_h} \right>_F
\end{equation}
where $\left<u,v\right>_F = \int_F uv ~\mbox{d} s$, $\jump{\nabla v_h}\vert_F = \nabla v_h\vert_{F \cap \partial \ele_1}  \cdot
n_1 +\nabla v_h\vert_{F \cap \partial \ele_2}  \cdot n_2$ for $F = \bar \ele_1 \cap
\bar \ele_2$ and $n_1$ and $n_2$ denote the outward pointing normals of
the simplices $\ele_1$ and $\ele_2$ respectively. To reduce the amount of
crosswind diffusion the $|\bfbeta|$ factor may be replaced by  $|\bfbeta
\cdot n|$.  Define the stabilization semi norm by
\[
|w_h|_s := s(w_h,w_h)^{\frac12}.
\]
Also recall the following inverse inequality 
\begin{equation} \label{eq:stab_invest}
|w_h|_s \leq C h^{-\frac12} \beta_\infty^{\frac12} \|w_h\|_\Omega,
\quad \forall w_h \in V_h
\end{equation}
which is a consequence of the scaled trace inequality,
\cite[Theorem 1.6.6]{BS08}), 
\begin{equation}\label{eq:trace}
\|v\|_{\partial \ele} \leq C_{\ele} (h^{-\frac12} \|v\|_{\ele} + h^{\frac12}
\|\nabla v\|_{\ele}), \quad \forall v \in H^1(\ele)
\end{equation}
and \eqref{eq:inverse_ineq}.

The enhanced control of derivatives offered by this stabilization term
can be expressed as 
\begin{equation}\label{eq:stab_bound}
\inf_{v_h \in V_h} \|h^{\frac12}(\bfbeta \cdot \nabla u_h -
v_h)\|_\Omega^2 \leq C_s( \beta_\infty |u_h|^2_s + h \|\nabla \bfbeta\|^2_{\infty} \|u_h\|_\Omega^2).
\end{equation}
This is an immediate consequence of the local
estimate of \cite[Lemma 5.3]{BE07} and local approximation of $\bfbeta$
using lowest order Raviart-Thomas functions (for details see the discussion
\cite[Page 4]{BG20}). In particular this implies (since
$\partial_t u_h \in V_h$) that
\begin{equation}\label{eq:material_stab}
\|\mathcal{L} u_h\|_{\Omega} \leq C \|\pi_h \mathcal{L} u_h\|_\Omega +
 C^{\frac12}_s ( h^{-\frac12} \beta_\infty^{\frac12} |u_h|_s +  \|\nabla \bfbeta\|_{\infty} \|u_h\|_\Omega).
\end{equation}
It follows that when the finite element method has the additional
stability offered by the operator $s$, the constant in the bound for
$\mathcal{L} u_h$ will grow at the rate $O(h^{-\frac12})$ under mesh
refinement. Therefore we propose the stabilized method,
 find $u_h(t) \in V_h$, with
$u_h(0) = \pi_h u_0$, such
that
\begin{equation}\label{eq:stab_Galerkin}
(\mathcal{L} u_h(t),v_h)_\Omega +  \gamma s(u_h,v_h) = F(v_h), \quad \forall v_h \in V_h
\end{equation}
for $\gamma>0$. Clearly for $\gamma=0$ we recover the standard
Galerkin method.
\begin{remark}
Although we only consider continuous FEM below all the results
holds true for dG methods if the standard Galerkin
method (without stabilization) is replaced by the standard
dG method with central flux and the stabilized
finite element method is replaced by the standard dG method with upwind flux. There is indeed a common
misconception that the enhanced stability of the dG methods (space
discretization) is due to
the discontinuity of the element. The discontinuity only allows for
the improved control of the material derivative if there is
sufficent control on the solution jump. This can be introduced
through upwind fluxes, or otherwise. Indeed it is easy to see that the
upwind flux formulation is obtained from the central flux formulation
by adding the following stabilization term \cite{BMS04}
\[
s_{up}(v_h,w_h):= \frac12 \sum_{F \in \mathcal{F}}  \left<
|\bfbeta \cdot n_F| [v_h],[w_h] \right>_F
\]
where $[\cdot]$ simply denotes the jump of the function over the
element face $F$. In general the full jump needs to be penalized, but the minimal stabilization needed to make the dG method
satisfy the bound \eqref{eq:material_stab} depends on the mesh
geometry and the polynomial order \cite{BS07,WLZS19}. 
\end{remark}
\section{Stability estimate of the finite element method}\label{sec:stability}
Here we will formalize the discussion of the previous section to
obtain a stability estimate that will be useful for the subsequent
error analysis. 
First define the 
operator norms
\begin{equation}\label{eq:Fdef}
\|F\|_0 := \sup_{v_h \in V_h}
\frac{|F(v_h)|}{\|v_h\|_{\Omega}}  \mbox{ and } \|F\|_h := \sup_{v_h \in V_h}
\frac{|F(v_h)|}{\|v_h\|_{\Omega} + |v_h|_s}.
\end{equation}
With these definitions the arguments discussed in the previous section
may be written as follows.
\begin{theorem}\label{thm:stab_stab}
Let $u_h$ solve \eqref{eq:stab_Galerkin} with $\gamma>0$ then for all
$\tau \in [0,T]$
\[
\|u_h(\tau)\|_\Omega^2 + \gamma \int_0^\tau|u_h|_s^2 ~\mbox{d}t \leq
C_\beta \left(\int_0^\tau \|F\|^2_h~\mbox{d}t + \|u_h(0)\|_\Omega^2\right)
\]
where $C_\beta = O(\gamma^{-1}+T)$.
\end{theorem}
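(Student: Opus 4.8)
The plan is a standard energy argument, with attention paid to tracking the dependence of the constant on $\gamma$ and $T$ so that no exponential factor in $T$ appears.

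First I would test \eqref{eq:stab_Galerkin} with $v_h = u_h(t)$, obtaining the pointwise-in-time identity $(\partial_t u_h, u_h)_\Omega + (\bfbeta\cdot\nabla u_h, u_h)_\Omega + \gamma |u_h|_s^2 = F(u_h)$. The convective term vanishes: integrating by parts (legitimate since $u_h \in H^1_{per}(\Omega)$ and $\bfbeta \in C^1$), the boundary contribution cancels by periodicity of $u_h$ and $\bfbeta$, and the remaining volume term equals $-\tfrac12(\nabla\cdot\bfbeta\, u_h,u_h)_\Omega = 0$ because $\nabla\cdot\bfbeta = 0$. Since \eqref{eq:stab_Galerkin} is a finite-dimensional ODE system with coefficients depending continuously on $t$, the solution $u_h(\cdot)$ is continuously differentiable in time and $(\partial_t u_h, u_h)_\Omega = \tfrac12\tfrac{d}{dt}\|u_h\|_\Omega^2$, so the identity becomes $\tfrac12\tfrac{d}{dt}\|u_h\|_\Omega^2 + \gamma|u_h|_s^2 = F(u_h)$.

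Next I would integrate this from $0$ to $\tau$ and estimate $F(u_h) \le \|F\|_h(\|u_h\|_\Omega + |u_h|_s)$, using the very definition of $\|\cdot\|_h$ in \eqref{eq:Fdef} (note $\|F\|_h \le \|F\|_0$, so this is the weaker requirement on the data). The contribution $\int_0^\tau\|F\|_h|u_h|_s\,\mbox{d}t$ is handled by Young's inequality, producing $\tfrac\gamma2\int_0^\tau|u_h|_s^2\,\mbox{d}t + \tfrac1{2\gamma}\int_0^\tau\|F\|_h^2\,\mbox{d}t$; the first of these is absorbed into the left-hand side, and this is precisely the step that requires $\gamma>0$ and that forces the $\gamma^{-1}$ in the final constant. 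For the contribution $\int_0^\tau\|F\|_h\|u_h\|_\Omega\,\mbox{d}t$ I would avoid Gr\"onwall (which would yield an $e^{CT}$ factor) and instead set $E(\tau) := \sup_{t\in[0,\tau]}\|u_h(t)\|_\Omega$, pull $E(\tau)$ out of the integral, and apply Cauchy--Schwarz in time followed by Young to bound it by $\tfrac14 E(\tau)^2 + \tau\int_0^\tau\|F\|_h^2\,\mbox{d}t$.

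After absorption this leaves $\tfrac12\|u_h(\tau)\|_\Omega^2 + \tfrac\gamma2\int_0^\tau|u_h|_s^2\,\mbox{d}t \le \tfrac12\|u_h(0)\|_\Omega^2 + \tfrac14 E(\tau)^2 + (\tau + \tfrac1{2\gamma})\int_0^\tau\|F\|_h^2\,\mbox{d}t$, valid for every $\tau\in[0,T]$ with a right-hand side monotone in $\tau$. I would then discard the stabilization term on the left, take the supremum over $[0,\tau]$ of $\|u_h\|_\Omega^2$, and absorb $\tfrac14 E(\tau)^2$ into the left to get $E(\tau)^2 \le 2\|u_h(0)\|_\Omega^2 + (4\tau + 2\gamma^{-1})\int_0^\tau\|F\|_h^2\,\mbox{d}t$; substituting this back into the previous display gives the assertion with $C_\beta = O(\gamma^{-1}+T)$. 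I do not anticipate any real obstacle: the only delicate points are the vanishing of the convective term (divergence-free field and periodic boundary) and arranging the absorptions so the constant is $O(\gamma^{-1}+T)$ rather than exponential in $T$; the factor $\gamma^{-1}$ is intrinsic, arising from the part of $F$ tested against $|u_h|_s$.
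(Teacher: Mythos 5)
Your proof is correct and the core of it coincides with the paper's: test with $v_h=u_h$, kill the convective term by skew-symmetry (periodicity and $\nabla\cdot\bfbeta=0$), integrate in time, bound $F(u_h)\le\|F\|_h(\|u_h\|_\Omega+|u_h|_s)$, and absorb the $|u_h|_s$ contribution with Young's inequality at the price of $\gamma^{-1}$. The only genuine difference is the treatment of the remaining term $\int_0^\tau\|F\|_h\|u_h\|_\Omega\,\mbox{d}t$: you pull out $E(\tau)=\sup_{t\in[0,\tau]}\|u_h(t)\|_\Omega$ (finite since $u_h$ solves a linear ODE system, as you note), apply Cauchy--Schwarz in time and absorb $\tfrac14E(\tau)^2$ after taking the supremum of the monotone estimate, entirely avoiding Gr\"onwall. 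The paper instead splits this term as $(\gamma^{-1}+T)\|F\|_h^2+\tfrac12 T^{-1}\|u_h\|_\Omega^2+\tfrac{\gamma}{2}|u_h|_s^2$ and applies Gr\"onwall; note that your stated motive for avoiding Gr\"onwall (an $e^{CT}$ factor) does not actually apply there, because the deliberate $T^{-1}$ weighting makes the Gr\"onwall factor $\exp(\tau/T)\le e$, a fixed constant. Both routes thus deliver $C_\beta=O(\gamma^{-1}+T)$ with no exponential growth in $T$; yours is marginally more self-contained (no Gr\"onwall lemma needed), the paper's handles the zeroth-order term without introducing the auxiliary supremum and extends verbatim to the stabilization bound, which you recover by back-substitution just as the paper does.
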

\begin{proof}
First take $v_h = u_h$ in \eqref{eq:stab_Galerkin} to obtain using the skew symmetry of the convective operator
\[
(\mathcal{L} u_h, u_h)_\Omega = \frac12 
\frac{d}{dt} \|u_h(t)\|_\Omega^2 
\]
and therefore after integration in time over $(0,\tau)$
\begin{multline*}
 \frac12 \|u_h(\tau)\|_\Omega^2 +\gamma \int_0^\tau |u_h(t)|_s^2 ~\mbox{d}t
 \leq \frac12 \|u_h(0)\|_\Omega^2+\int_0^\tau  F(u_h) ~\mbox{d}t\\
\leq \frac12 \|u_h(0)\|_\Omega^2+\int_0^\tau\|F\|_h (\|u_h(t)\|_{\Omega} + |u_h(t)|_s) ~\mbox{d}t .
\end{multline*}
Using the arithmetic-geometric inequality $ab \leq \tfrac12
a^2 + \tfrac12 b^2$ it follows that $\|F\|_h (\|u_h(t)\|_{\Omega} +
|u_h(t)|_s) \leq (\gamma^{-1} +T) \|F\|_h^2 + \tfrac12 T^{-1}
\|u_h(t)\|^2_{\Omega}+\gamma \tfrac12 |u_h(t)|_s^2$ leading to
\[
\|u_h(\tau)\|_\Omega^2 +\gamma \int_0^\tau |u_h(t)|_s^2 ~\mbox{d}t  \leq
\|u_h(0)\|_\Omega^2 +  (\gamma^{-1}+ T)
\int_0^\tau \|F\|_h^2 ~\mbox{d}t +
 \int_0^\tau T^{-1} \|u_h(t)\|^2_{\Omega}~\mbox{d}t.
\]
By Gronwall's inequality we have
\begin{multline*}
\|u_h(\tau)\|_\Omega^2 \leq \left(\exp{\int_0^\tau T^{-1} ~\mbox{d}t} \right)\left(\|u_h(0)\|_\Omega^2+  (\gamma^{-1}+T) \int_0^\tau\|F\|_h^2 ~\mbox{d}t \right)\\
\leq C \left(\|u_h(0)\|_\Omega^2 + (\gamma^{-1}+T)\int_0^\tau\|F\|_h^2 ~\mbox{d}t\right).
\end{multline*}
We may then bound
\begin{multline*}
\gamma \int_0^\tau |u_h(t)|_s^2 ~\mbox{d}t \leq \|u_h(0)\|_\Omega^2 +  (\gamma^{-1}+T)
\int_0^\tau \|F\|_h^2 ~\mbox{d}t +
 \int_0^\tau T^{-1} \|u_h(t)\|^2_{\Omega}~\mbox{d}t \\ \leq C\left(\|u_h(0)\|_\Omega^2 + (\gamma^{-1}+T)
\int_0^\tau\|F\|_h^2 ~\mbox{d}t \right)
\end{multline*}
which concludes the proof.
\end{proof}
For the material derivative we can prove the similar bound
\begin{corollary}\label{cor:material_stab}
Let $u_h$ solve \eqref{eq:stab_Galerkin} with $\gamma>0$ then there
holds
\[
\int_0^T \|h^{\frac12} \mathcal{L} u_h\|_\Omega^2 ~\mbox{d}t \leq
C_\beta \zeta(\gamma)^2 \left (\|u_h(0)\|_\Omega^2+
\int_0^T  
(h \|F\|_0^2 + (\beta_\infty + h \|\nabla \bfbeta\|^2_\infty T ) \|F\|_h^2 ) ~\mbox{d}t\right),
\]
where $\zeta(\gamma) = \gamma^{\frac12} + \gamma^{-\frac12}$.
\end{corollary}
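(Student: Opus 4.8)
The plan is to combine the pointwise-in-time bound \eqref{eq:material_stab} on $\|\mathcal{L} u_h\|_\Omega$ with the stability estimate of Theorem~\ref{thm:stab_stab}. Multiplying \eqref{eq:material_stab} by $h^{\frac12}$, squaring, and integrating over $(0,T)$ reduces the claim to controlling the three integrals $\int_0^T \|h^{\frac12}\pi_h\mathcal{L} u_h\|_\Omega^2\,\mbox{d}t$, $\beta_\infty\int_0^T |u_h|_s^2\,\mbox{d}t$ and $h\|\nabla\bfbeta\|_\infty^2\int_0^T \|u_h\|_\Omega^2\,\mbox{d}t$. The last two are immediate from Theorem~\ref{thm:stab_stab}: the stabilization integral is bounded by $\gamma^{-1}C_\beta\big(\int_0^T\|F\|_h^2\,\mbox{d}t + \|u_h(0)\|_\Omega^2\big)$, and $\int_0^T\|u_h\|_\Omega^2\,\mbox{d}t \leq T\sup_{[0,T]}\|u_h\|_\Omega^2$ is bounded by $T$ times the same quantity.

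The one genuinely new step is the estimate of the projected material derivative $\|\pi_h\mathcal{L} u_h\|_\Omega$. Since $\pi_h\mathcal{L} u_h \in V_h$ it is an admissible test function in \eqref{eq:stab_Galerkin}; choosing $v_h = \pi_h\mathcal{L} u_h$ and using that $(\mathcal{L} u_h, w_h)_\Omega = (\pi_h\mathcal{L} u_h, w_h)_\Omega$ for all $w_h\in V_h$ gives $\|\pi_h\mathcal{L} u_h\|_\Omega^2 = F(\pi_h\mathcal{L} u_h) - \gamma\, s(u_h,\pi_h\mathcal{L} u_h)$. The first term is bounded by $\|F\|_0\|\pi_h\mathcal{L} u_h\|_\Omega$ by definition of $\|\cdot\|_0$, and the second by Cauchy--Schwarz in $s(\cdot,\cdot)$ followed by the inverse inequality \eqref{eq:stab_invest} applied to $\pi_h\mathcal{L} u_h$, yielding $\gamma|u_h|_s\,|\pi_h\mathcal{L} u_h|_s \leq C\gamma h^{-\frac12}\beta_\infty^{\frac12}|u_h|_s\|\pi_h\mathcal{L} u_h\|_\Omega$. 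Dividing through by $\|\pi_h\mathcal{L} u_h\|_\Omega$ (the bound being trivial when this vanishes) leads to $\|h^{\frac12}\pi_h\mathcal{L} u_h\|_\Omega \leq h^{\frac12}\|F\|_0 + C\gamma\beta_\infty^{\frac12}|u_h|_s$.

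Squaring this, integrating in time, and inserting the Theorem~\ref{thm:stab_stab} bound on $\int_0^T|u_h|_s^2\,\mbox{d}t$ (which carries the factor $\gamma^{-1}$) produces a contribution with prefactor $\gamma^2\cdot\gamma^{-1}=\gamma$, whereas the direct $|u_h|_s$ contribution coming from \eqref{eq:material_stab} carries the factor $\gamma^{-1}$; since $\gamma+\gamma^{-1}\le\zeta(\gamma)^2$ this accounts for the $\zeta(\gamma)^2$ in the statement. Collecting the data terms, the $F(\pi_h\mathcal{L} u_h)$ term gives $h\|F\|_0^2$, the stabilization term gives $\beta_\infty\|F\|_h^2$, the $\|u_h\|_\Omega^2$ term gives $h\|\nabla\bfbeta\|_\infty^2 T\|F\|_h^2$, and there remains the initial-data term $\|u_h(0)\|_\Omega^2$, which after renaming the generic constant $C_\beta$ (and using that $C_\beta\zeta(\gamma)^2\gtrsim 1$ to absorb the isolated $h\|F\|_0^2$ contribution) reproduces exactly the claimed bound. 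I expect no serious obstacle: all the analytical content is already packaged in \eqref{eq:material_stab}, \eqref{eq:stab_invest} and Theorem~\ref{thm:stab_stab}, so the work is confined to the test-function choice $v_h = \pi_h\mathcal{L} u_h$ and to the bookkeeping of the constants $C_s$, $C_\beta$ and the powers of $\gamma$ and $h$.
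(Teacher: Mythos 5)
Your proposal is correct and follows essentially the same route as the paper: the paper splits $\|h^{\frac12}\mathcal{L} u_h\|_\Omega^2$ exactly into $(\mathcal{L} u_h, h\pi_h\mathcal{L} u_h)_\Omega + \|h^{\frac12}(I-\pi_h)\mathcal{L} u_h\|_\Omega^2$ and then tests the scheme with $h\pi_h\mathcal{L} u_h$, which is the same key step as your choice $v_h=\pi_h\mathcal{L} u_h$, with the same use of \eqref{eq:stab_invest}, \eqref{eq:stab_bound} and Theorem~\ref{thm:stab_stab} and the same $\gamma$-bookkeeping. The only cosmetic difference is that you start from the stated inequality \eqref{eq:material_stab} rather than the exact orthogonal decomposition.
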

\begin{proof}
\[
\int_0^T \|h^{\frac12} \mathcal{L} u_h\|_\Omega^2 ~\mbox{d}t =
\int_0^T  (\mathcal{L} u_h, h\pi_h \mathcal{L} u_h)_\Omega ~\mbox{d}t +
\int_0^T \|h^{\frac12}(I - \pi_h) \mathcal{L} u_h\|_\Omega^2
~\mbox{d}t = T_1 + T_2.
\]
To bound the term $T_1$ we use the formulation
\eqref{eq:stab_Galerkin} to obtain
\[
(\mathcal{L} u_h, h \pi_h \mathcal{L} u_h)_\Omega = F(h \pi_h \mathcal{L}
u_h) - \gamma s(u_h,h \pi_h \mathcal{L}
u_h). 
\]
For the first term on the right hand side we see that using the first
definition of \eqref{eq:Fdef} and the
stability of the $L^2$-projection there holds
\[
F(h \pi_h \mathcal{L}
u_h) \leq\|F\|_0 \|h \pi_h \mathcal{L}
u_h\|_\Omega\leq h^{\frac12} \|F\|_0 \|h^{\frac12} \mathcal{L} u_h\|_\Omega.
\]
For the second term we use \eqref{eq:stab_invest} and the $L^2$-stability
of the projection to get
\[
\gamma s(u_h,h \pi_h \mathcal{L} u_h) \leq \gamma
s(u_h,u_h)^{\frac12} s(h \pi_h \mathcal{L} u_h,h \pi_h \mathcal{L}
u_h)^{\frac12} \leq C \gamma \beta_\infty^{\frac12} |u_h|_s
\|h^{\frac12}  \mathcal{L} u_h\|_\Omega.
\]
Observe that in the last inequality a factor $h^{\frac12}$ is lost due to the
  application of \eqref{eq:stab_invest}.
Collecting these bounds we see that
\[
T_1 \leq \int_0^T (h \|F\|_0^2 + C^2 \gamma^2 \beta_\infty |u_h|^2_s + \frac12
\|h^{\frac12}  \mathcal{L} u_h\|_\Omega^2) ~\mbox{d} t.
\]
To bound $T_2$ we note that by the
definition of the $L^2$-projection
$\|h^{\frac12}(I - \pi_h) \mathcal{L} u_h\|_\Omega \leq
\|h^{\frac12}(\mathcal{L} u_h - v_h)\|_\Omega $ for all $v_h \in V_h$
and apply \eqref{eq:stab_bound} and the fact that $\partial_t u_h \in
V_h$, leading to
\[
T_2 = \int_0^T \inf_{v_h \in V_h} \|h^{\frac12}(\bfbeta \cdot \nabla u_h -
v_h)\|_\Omega^2 ~\mbox{d} t\leq C_s\int_0^T (\beta_\infty |u_h|^2_s + h \|\nabla \bfbeta\|^2_{\infty} \|u_h\|_\Omega^2) ~\mbox{d} t.
\]
The claim follows by the bounds on $T_1$ and $T_2$ and the result of Theorem \ref{thm:stab_stab}.
\end{proof}
\begin{remark}
Observe that the presence of both positive and negative powers of
$\gamma$ in $\zeta$, shows that the estimate degenerates both for
vanishing stabilization and for too strong stabilization. If $\gamma$
goes to inifinity the solution has to become $C^1$ and the solution
will in this case coincide with the standard Galerkin approximation in
the $C^1$-subspace, which is unstable, see
discussion in \cite{BQS10}
\end{remark}
\section{Error estimates for the stabilized formulation \eqref{eq:stab_Galerkin}}\label{sec:error}
Using the stability estimates of Theorem \ref{thm:stab_stab} it is
straightforward to derive the error estimate \eqref{eq:error_stab} for
smooth solutions. Below we will also use the Corollary \ref{cor:material_stab} to obtain
an optimal order $O(h^k)$ error estimate for the material derivative.

Then we will assume that $f \in L^2(0,T;\Omega)$ in \eqref{eq:regularity} so that
we only have $u \in L^2(0,T;\Omega)$. In this case we will show that
the stabilized finite element method still converges in a weaker norm.

\begin{theorem}\label{thm:error_bounds}
Let $u_0 \in H^{k+1}(\Omega)$, $f \in L^2(0,T;H^{k+1}(\Omega))$, let
$u$ be the solution of \eqref{eq:1st_order} and $u_h$ the solution of
\eqref{eq:stab_Galerkin}. Then there holds, for all $T>0$
\[
\|(u - u_h)(\cdot,T)\|_\Omega + \gamma \left(\int_0^T |u_h|_s^2 ~\mbox{d}t\right)^{\frac12} \leq C_\beta
\zeta(\gamma) h^{k+\frac12} (\|f\|_{ L^2(0,T;H^{k+1}(\Omega))} + \|u_0\|_{H^{k+1}(\Omega)})
\]
and
\[
\left(\int_0^T \|\mathcal{L}(u - u_h)\|_\Omega^2
  ~\mbox{d}t\right)^{\frac12} \leq C_\beta \zeta(\gamma)^2
h^{k} \|u\|_{ H^1(0,T;H^{k+1}(\Omega))},
\] 
where $\zeta(\gamma) := \gamma^{\frac12} + \gamma^{-\frac12}$ 
and $C_\beta$ depends on $\beta_\infty$ and $\|\nabla
\bfbeta\|_{\infty}$ and $T$.
\end{theorem}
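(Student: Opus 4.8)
The plan is the classical split-and-stability argument for linear symmetric stabilizations; the only non-routine ingredient is the recovery of the extra half power $h^{\frac12}$, which comes from combining the $L^2$-orthogonality of $\pi_h$ with the super-approximation bound \eqref{eq:stab_bound}. Write the error as $u-u_h=\eta+\xi_h$ with interpolation part $\eta:=u-\pi_h u$ and discrete part $\xi_h:=\pi_h u-u_h\in V_h$; since $u_h(0)=\pi_h u_0$, we have $\xi_h(0)=0$. By the regularity estimate \eqref{eq:regularity} (and with $k\ge 1$) the exact solution satisfies $u(t)\in H^{k+1}(\Omega)\subset H^2(\Omega)$, so $\jump{\nabla u}=0$ on every face, hence $s(u,v_h)=0$ and $u$ itself solves \eqref{eq:stab_Galerkin}. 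Subtracting the two formulations gives the error equation
\[
(\mathcal{L}\xi_h,v_h)_\Omega+\gamma s(\xi_h,v_h)=-(\mathcal{L}\eta,v_h)_\Omega-\gamma s(\eta,v_h)=:F_\eta(v_h),\qquad \forall\,v_h\in V_h,
\]
so that $\xi_h$ solves \eqref{eq:stab_Galerkin} with load $F_\eta$ and vanishing initial datum.

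The core estimate is a sharp bound on $F_\eta$. Since $\pi_h$ commutes with $\partial_t$, the orthogonality $((I-\pi_h)\partial_t u,v_h)_\Omega=0$ kills the time-derivative part of $(\mathcal{L}\eta,v_h)_\Omega$, leaving only the convective part; integrating that by parts, using periodicity and $\nabla\cdot\bfbeta=0$, and then inserting an arbitrary $w_h\in V_h$ and using $(\eta,w_h)_\Omega=0$,
\[
(\bfbeta\cdot\nabla\eta,v_h)_\Omega=-(\eta,\bfbeta\cdot\nabla v_h-w_h)_\Omega\le\|h^{-\frac12}\eta\|_\Omega\,\|h^{\frac12}(\bfbeta\cdot\nabla v_h-w_h)\|_\Omega .
\]
Minimising over $w_h$ and applying \eqref{eq:stab_bound} bounds the last factor by $C(\beta_\infty^{\frac12}|v_h|_s+h^{\frac12}\|\nabla\bfbeta\|_\infty\|v_h\|_\Omega)$, while $\|h^{-\frac12}\eta\|_\Omega\le C h^{k+\frac12}|u|_{H^{k+1}(\Omega)}$ by \eqref{eq:L2approx}: this is precisely where the half power of $h$ is gained, and it is exactly the control of $\bfbeta\cdot\nabla v_h$ by the stabilization seminorm in \eqref{eq:stab_bound} that is unavailable when $\gamma=0$. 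For the stabilization term we use $s(\eta,v_h)=-s(\pi_h u,v_h)$ and the scaled trace inequality \eqref{eq:trace} together with \eqref{eq:L2approx} (for first and second derivatives of $\eta$) to get $|\eta|_s\le C\beta_\infty^{\frac12}h^{k+\frac12}|u|_{H^{k+1}(\Omega)}$, so that $\gamma|s(\eta,v_h)|\le\gamma|\eta|_s|v_h|_s$; altogether $\|F_\eta\|_h$ and $\|F_\eta\|_0$ are bounded by a $\gamma$-dependent constant times $h^{k+\frac12}|u|_{H^{k+1}(\Omega)}$, respectively $h^{k}|u|_{H^{k+1}(\Omega)}$ (the latter obtained more crudely, without the integration by parts, using \eqref{eq:stab_invest} on the stabilization term).

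For the first estimate, test the error equation with $v_h=\xi_h$ and use the skew-symmetry of the convective term exactly as in Theorem \ref{thm:stab_stab} to get
\[
\tfrac12\tfrac{\mathrm d}{\mathrm d t}\|\xi_h\|_\Omega^2+\gamma|\xi_h|_s^2=F_\eta(\xi_h).
\]
Insert the bounds of the previous paragraph and apply the arithmetic–geometric inequality so that every $|\xi_h|_s$-factor — in particular the one coming from $\gamma s(\eta,\xi_h)$, split as $(\gamma^{\frac12}|\eta|_s)(\gamma^{\frac12}|\xi_h|_s)$ — is absorbed into the coercive term $\gamma|\xi_h|_s^2$, while the $\|\xi_h\|_\Omega$-factors are retained for Gronwall; this absorption is what generates the factor $\zeta(\gamma)$. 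Integrating over $(0,T)$, using $\xi_h(0)=0$ and Gronwall (whose constant, and $T$, are absorbed into $C_\beta$), and then estimating $\int_0^T|u(t)|_{H^{k+1}(\Omega)}^2\,\mathrm d t$ by \eqref{eq:regularity} with $j=k+1$, yields
\[
\|\xi_h(T)\|_\Omega^2+\gamma\int_0^T|\xi_h|_s^2\,\mathrm d t\le C_\beta\,\zeta(\gamma)^2\,h^{2k+1}\bigl(\|f\|_{L^2(0,T;H^{k+1}(\Omega))}^2+\|u_0\|_{H^{k+1}(\Omega)}^2\bigr).
\]
Combined with the triangle inequality, $\|\eta(T)\|_\Omega\le C h^{k+1}|u(T)|_{H^{k+1}(\Omega)}$, $|u_h|_s\le|\xi_h|_s+|\eta|_s$, and the regularity bound for $u(T)$, this gives the first assertion.

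For the material-derivative estimate, write $\mathcal{L}(u-u_h)=\mathcal{L}\eta-\mathcal{L}\xi_h$. The interpolation part is bounded directly, $\|\mathcal{L}\eta\|_\Omega\le\|(I-\pi_h)\partial_t u\|_\Omega+\beta_\infty\|\nabla\eta\|_\Omega\le C\bigl(h^{k+1}|\partial_t u|_{H^{k+1}(\Omega)}+\beta_\infty h^{k}|u|_{H^{k+1}(\Omega)}\bigr)$, which integrates in time to $O(h^{k})\|u\|_{H^1(0,T;H^{k+1}(\Omega))}$. For $\mathcal{L}\xi_h$, apply Corollary \ref{cor:material_stab} to $\xi_h$ (it solves \eqref{eq:stab_Galerkin} with load $F_\eta$ and zero initial datum), inserting the bounds on $\|F_\eta\|_0$ and $\|F_\eta\|_h$ from the second paragraph; after dividing by $h$ and using \eqref{eq:regularity}, this gives $\bigl(\int_0^T\|\mathcal{L}\xi_h\|_\Omega^2\,\mathrm d t\bigr)^{\frac12}\le C_\beta\,\zeta(\gamma)^2\,h^{k}\,\|u\|_{H^1(0,T;H^{k+1}(\Omega))}$, and adding the bound on $\mathcal{L}\eta$ completes the proof. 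I expect the only real obstacle to be the $h^{\frac12}$-improved bound on $\|F_\eta\|_h$ of the second paragraph; everything else (the orthogonality $((I-\pi_h)\partial_t u,v_h)_\Omega=0$, the seminorm bound $|\eta|_s\lesssim h^{k+\frac12}$, and the bookkeeping needed to keep the $\gamma$-dependence sharp, namely absorbing $s$-seminorm terms directly against $\gamma|\xi_h|_s^2$ rather than through the abstract norm $\|\cdot\|_h$) is routine.
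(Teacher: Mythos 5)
Your proof is correct and follows essentially the same route as the paper: the same splitting $u-u_h=(u-\pi_h u)+(\pi_h u-u_h)$, the same perturbation equation with residual $F_\pi$, the same gain of the half power via $L^2$-orthogonality, integration by parts and \eqref{eq:stab_bound}, and the same appeal to Corollary \ref{cor:material_stab} for the material derivative (the paper invokes Theorem \ref{thm:stab_stab} abstractly where you redo the energy/Gronwall argument by hand, but that is a presentational difference only). No gaps.
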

\begin{proof}
This result is a consequence of the stability of Theorem
\ref{thm:stab_stab}, the consistency and \eqref{eq:stab_bound}. It is standard material (see \cite[Section
76.4]{EG21}) however for completeness we include the short proof.

Using standard approximation estimates there holds \cite[Lemma 5.6]{BE07} 
\begin{equation}\label{eq:L2_approx}
\|\beta_\infty^{\frac12} h^{-\frac12}(u - \pi_h u)\|_\Omega + |u- \pi_h u|_s \leq C \beta_\infty^{\frac12} h^{k+\frac12} |u|_{H^{k+1}(\Omega)}.
\end{equation}
Hence by applying a triangle inequality we only need to consider the discrete error $e_h = 
\pi_h u - u_h$. Injecting it in the equation \eqref{eq:1st_order} and using
\eqref{eq:stab_Galerkin} we see that
\[
(\mathcal{L} e_h, v_h) + \gamma s(e_h,v_h) = F_\pi(v_h)
\]
with $F_\pi(v_h) = (\partial_t (\pi_h u-u), v_h)_\Omega + (\bfbeta \cdot
\nabla  ( \pi_h u-u), v_h)_\Omega +  \gamma s(\pi_h u, v_h)$.
Applying Theorem \ref{thm:stab_stab} we see that
\[
\|e_h(T)\|_\Omega^2 +\gamma \int_0^T |e_h|_s^2
  ~\mbox{d}t\leq C_\beta \int_0^T\|F_\pi\|_h^2 ~\mbox{d}t + \|e_h(0)\|_\Omega^2.
\]
By the definition of  $u_h(0)$, $e_h(0)=0$. Since $ \partial_t \pi_h u
= \pi_h \partial_t u$ we have using $L^2$-orthogonality and
intergration by parts
\[
F_\pi(v_h) =(u - \pi_h u, \bfbeta \cdot
\nabla  v_h - w_h)_\Omega +  \gamma s(\pi_h u, v_h), \quad \forall w_h \in V_h.
\]
It now follows using the Cauchy-Schwarz inequality,
\eqref{eq:stab_bound} and \eqref{eq:L2_approx} and recalling that under the
regularity assumptions on data $u(t) \in H^2(\Omega)$, that
\begin{equation}\label{eq:Ferror}
\|F_\pi\|_h \leq C_\beta \zeta(\gamma) h^{k+\frac12} |u|_{H^{k+1}(\Omega)}.
\end{equation}
The first claim then follows after an application of
\eqref{eq:regularity}.

For the second inequality we apply Corollary \ref{cor:material_stab} to
see that, since $e_h(0) = 0$,
\begin{equation}\label{eq:Lpert}
\int_0^T \|h^{\frac12} \mathcal{L} e_h\|_\Omega^2 ~\mbox{d}t \leq C \zeta(\gamma)^2
\int_0^T    (h \|F_\pi\|_0^2 + (\beta_\infty + h \|\nabla
\bfbeta\|_\infty^2 T) \|F_\pi\|_h^2 ) ~\mbox{d}t.
\end{equation}
It follows that we only need to bound $F$ in the stronger topology
$\|\cdot\|_0$ to conclude. Using the
Cauchy-Schwarz inequality and the inverse inequalities
\eqref{eq:inverse_ineq} and \eqref{eq:stab_invest}
\begin{alignat*}{1}
F_\pi(v_h) & =(u - \pi_h u, \bfbeta \cdot
\nabla  v_h)_\Omega + \gamma s(\pi_h u, v_h) \\
&\leq C  \beta_\infty \|h^{-1} (u - \pi_h u)\|_\Omega \|v_h\|_\Omega
+ C \gamma h^{-\frac12} \beta_\infty^{\frac12} |\pi_h u|_s \|v_h\|_\Omega.
\end{alignat*} 
It follows from \eqref{eq:L2_approx} that
\begin{equation}\label{eq:Fzero}
\|F\|_0 \leq C_\beta (1 +\gamma) h^k |u|_{H^{k+1}(\Omega)}.
\end{equation}
Combining this bound for $\|F\|_0$ with the bound \eqref{eq:Ferror} in \eqref{eq:Lpert} we see that
\begin{equation}
\int_0^T \|h^{\frac12} \mathcal{L} e_h\|_\Omega^2 ~\mbox{d}t \leq
C_\beta \zeta(\gamma)^4 h^{2 k+1}
\int_0^T 
|u|^2_{H^{k+1}(\Omega)} ~\mbox{d}t
\end{equation}
and we conclude using the approximation bound
\[
\| \mathcal{L} (u - \pi_h u) \|_\Omega \leq C (h^{k+1} \|\partial_t
u\|_{H^{k+1}(\Omega)} + \beta_\infty h^k  \|
u\|_{H^{k+1}(\Omega)} )
\]
and the triangle inequality.

\end{proof}
\begin{remark}
Note that the error estimate on the material derivative is optimal
compared with the approximation properties of the finite element space.
In the corresponding analysis for \eqref{eq:standard_Galerkin} 
only $\|F\|_0$ may be used for the upper bound in Theorem
\ref{thm:stab_stab}, resulting in a bound that is suboptimal by
$O(h^{\frac12})$.
\end{remark}
\subsection{Rough solutions: convergence in weak norms}\label{sec:rough}
Assume now that we have $f \in L^2((0,T);\Omega)$ in
\eqref{eq:stab_Galerkin} and $u_0 \in L^2(\Omega)$. Then $u \in  L^2((0,T);\Omega)$ is the best
we can hope for, making the error estimates of Theorem
\ref{thm:error_bounds} invalid. However if we estimate the error in
a weaker norm, we can still obtain an error bound with convergence
order, provided a stabilized method is used. For $\psi \in
H^1_{per}(\Omega)$ consider the adjoint
problem
\begin{alignat}{1}\label{eq:adjoint}
-\mathcal{L} \varphi & = 0\\
\varphi(\cdot,T) & = \psi .
\end{alignat}
This problem admits a unique solution and by \eqref{eq:regularity}
\begin{equation}\label{eq:stab_adjoint}
\sup_{t \in (0,T)} \|\varphi(t)\|_{H^1(\Omega)} \leq C_\beta
\|\psi\|_{H^1(\Omega)}.
\end{equation}
Let $V:=  H^1_{per} (\Omega)$ and introduce the dual norm
\[
\|v\|_{V'} := \sup_{w \in V\setminus 0} \frac{\left<v,w\right>_{V',V}}{\|w\|_{V}}
\]
where $\left<v,w\right>_{V',V}$ is a space duality pairing that we can
identify with the $L^2$-scalar product for $v \in L^2(\Omega)$.
We now proceed using duality to prove an a posteriori bound
\begin{proposition}\label{prop:apost} (A posteriori error bound)
 Let
$u$ be the solution of \eqref{eq:1st_order} with $f \in L^2(0,T;\Omega)$
and $u_0 \in L^2(\Omega)$ and $u_h$ the solution of
\eqref{eq:stab_Galerkin}, with $\gamma \ge 0$. Then there holds, for all $T>0$
\begin{alignat*}{1}
\frac{ ((u - u_h)(\cdot,T),\psi)_\Omega }{\|\psi\|_{V}}&\leq C_\beta h \| u_0 - \pi_h
u_0\|_\Omega \\ 
& + C_\beta\int_0^T (\inf_{v_h \in V_h}  h  \|f - \bfbeta \cdot
  \nabla u_h - v_h\|_\Omega + \gamma h^{\frac12} |u_h|_s )~\mbox{d}t.
\end{alignat*}
\end{proposition}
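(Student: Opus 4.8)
The plan is to run a standard duality argument against the adjoint solution $\varphi$ of \eqref{eq:adjoint} with final datum $\psi$, exploiting the stability bound \eqref{eq:stab_adjoint}. Write $w = u - u_h$. Since $\varphi(\cdot,T)=\psi$, we have $((u-u_h)(\cdot,T),\psi)_\Omega = (w(\cdot,T),\varphi(\cdot,T))_\Omega$, and the first step is to integrate $\tfrac{d}{dt}(w(t),\varphi(t))_\Omega$ over $(0,T)$. Using $\partial_t\varphi = -\bfbeta\cdot\nabla\varphi$ from \eqref{eq:adjoint}, the divergence-free condition $\nabla\cdot\bfbeta=0$ together with periodicity (the identity $\int_\Omega \bfbeta\cdot\nabla(u\varphi)~\mbox{d}x = 0$), and $\mathcal{L}u = f$, one arrives at
\[
((u-u_h)(\cdot,T),\psi)_\Omega = (u_0 - \pi_h u_0,\varphi(\cdot,0))_\Omega + \int_0^T (f - \mathcal{L}u_h,\varphi)_\Omega ~\mbox{d}t,
\]
where $w(\cdot,0) = u_0 - u_h(0) = u_0 - \pi_h u_0$ was used. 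As $u$ is only $L^2$ in space, this identity is the very weak formulation of \eqref{eq:1st_order} tested against the adjoint solution and is justified by a density argument (valid for smooth data, then passed to the limit, cf.\ \cite{GS10}).

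The second step is Galerkin orthogonality: for almost every $t$ and every $v_h \in V_h$, the scheme \eqref{eq:stab_Galerkin} gives $(f - \mathcal{L}u_h,v_h)_\Omega = \gamma s(u_h,v_h)$. Choosing $v_h = \pi_h\varphi(t)$ splits the integral as
\[
\int_0^T (f - \mathcal{L}u_h,\varphi)_\Omega ~\mbox{d}t = \int_0^T (f - \mathcal{L}u_h,\varphi - \pi_h\varphi)_\Omega ~\mbox{d}t + \gamma \int_0^T s(u_h,\pi_h\varphi)~\mbox{d}t.
\]
In the first integral, since $\partial_t u_h \in V_h$ is $L^2$-orthogonal to $\varphi - \pi_h\varphi$, we may replace $\mathcal{L}u_h$ by $\bfbeta\cdot\nabla u_h$, and by the same orthogonality subtract an arbitrary $v_h \in V_h$, so that the integrand equals $(f - \bfbeta\cdot\nabla u_h - v_h,\varphi - \pi_h\varphi)_\Omega$. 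Cauchy--Schwarz, the approximation estimate $\|\varphi - \pi_h\varphi\|_\Omega \leq C h \|\varphi\|_{H^1(\Omega)}$ (a consequence of \eqref{eq:L2approx}), \eqref{eq:stab_adjoint}, and a passage to the infimum over $v_h$ then produce the contribution $C_\beta h \|\psi\|_V \int_0^T \inf_{v_h \in V_h}\|f - \bfbeta\cdot\nabla u_h - v_h\|_\Omega ~\mbox{d}t$. The initial term is treated similarly: by $L^2$-orthogonality $(u_0 - \pi_h u_0,\varphi(\cdot,0))_\Omega = (u_0 - \pi_h u_0,\varphi(\cdot,0) - \pi_h\varphi(\cdot,0))_\Omega$, and Cauchy--Schwarz together with $\|\varphi(\cdot,0) - \pi_h\varphi(\cdot,0)\|_\Omega \leq C h \|\varphi(\cdot,0)\|_{H^1(\Omega)}$ and \eqref{eq:stab_adjoint} give $C_\beta h \|u_0 - \pi_h u_0\|_\Omega \|\psi\|_V$.

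The only delicate point, and what I expect to be the main obstacle, is the stabilization integral $\gamma\int_0^T s(u_h,\pi_h\varphi)~\mbox{d}t$, since the rough dual solution $\varphi$ cannot be inserted into the seminorm $|\cdot|_s$ (it involves $L^2$ traces of $\nabla\varphi$ on faces). The remedy is to stay on the discrete side: $s(u_h,\pi_h\varphi) \leq |u_h|_s \, |\pi_h\varphi|_s$, and to bound $|\pi_h\varphi|_s$ via the scaled trace inequality \eqref{eq:trace} and the inverse inequality \eqref{eq:inverse_ineq} applied to the piecewise polynomial $\nabla\pi_h\varphi$, which yields $|w_h|_s \leq C\beta_\infty^{\frac12} h^{\frac12}\|\nabla w_h\|_\Omega$ for all $w_h \in V_h$ (a counterpart of \eqref{eq:stab_invest}). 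Combining this with the $H^1$-stability of the $L^2$-projection on the quasi-uniform mesh, $\|\nabla\pi_h\varphi\|_\Omega \leq C\|\varphi\|_{H^1(\Omega)}$, and again \eqref{eq:stab_adjoint}, gives the contribution $C_\beta \gamma h^{\frac12}\|\psi\|_V \int_0^T |u_h|_s ~\mbox{d}t$. Dividing through by $\|\psi\|_V = \|\psi\|_{H^1(\Omega)}$ and collecting the three contributions yields the stated bound; note the argument is uniform in $\gamma \geq 0$, so it covers the unstabilized Galerkin case as well.
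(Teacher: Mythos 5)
Your proposal is correct and follows essentially the same route as the paper: the duality argument against the adjoint solution $\varphi$, Galerkin orthogonality with the test function $\pi_h\varphi$, replacement of $\mathcal{L}u_h$ by $\bfbeta\cdot\nabla u_h - v_h$ using the $L^2$-orthogonality of $\varphi - \pi_h\varphi$, and the bound $s(u_h,\pi_h\varphi)\leq |u_h|_s\, h^{\frac12}\beta_\infty^{\frac12}\|\nabla\varphi\|_\Omega$ combined with the adjoint stability \eqref{eq:stab_adjoint}. The only difference is that you spell out the trace/inverse-inequality and $H^1$-stability steps for $|\pi_h\varphi|_s$ that the paper leaves implicit.
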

\begin{proof}
Using the adjoint equation and integration by parts we see that for
any $\psi \in H^1_{per}(\Omega)$,
\begin{alignat*}{1}
((u - u_h)(\cdot,T),\psi)_\Omega & = ((u - u_h)(\cdot,T),\psi)_\Omega +\int_0^T (u -
u_h,-\mathcal{L} \varphi)_\Omega ~\mbox{d}t \\& =  (u_0 - \pi_h u_0,
\varphi(\cdot,0))_\Omega + \int_0^T (\mathcal{L} (u -
u_h), \varphi)_\Omega ~\mbox{d}t  \\
&= (u_0 - \pi_h u_0,
(I- \pi_h) \varphi(\cdot,0))_\Omega\\
& + \int_0^T ((\mathcal{L} (u -
u_h), \varphi - \pi_h \varphi)_\Omega +\gamma  s(u_h,\pi_h \varphi))~\mbox{d}t.
\end{alignat*}
Considering the terms of the right hand side we see that
\[
(u_0 - \pi_h u_0,
(I- \pi_h) \varphi(\cdot,0))_\Omega \leq C h \|u_0 - \pi_h
u_0\|_\Omega \|\nabla \varphi(\cdot,0)\|_\Omega,
\]
\[
 ((\mathcal{L} (u -
u_h), \varphi - \pi_h \varphi)_\Omega \leq C h \inf_{v_h \in V_h} \|f - \mathcal{L} 
u_h - v_h\|_\Omega  \|\nabla \varphi\|_\Omega = C h \inf_{v_h \in V_h} \|f - \bfbeta \cdot
  \nabla u_h  - v_h\|_\Omega  \|\nabla \varphi\|_\Omega
\]
and
\[
 s(u_h,\pi_h \varphi) \leq |u_h|_s h^{\frac12} \beta^{\frac12}_\infty \|\nabla \varphi\|_\Omega.
\]
It follows that
\begin{multline*}
(u_0 - \pi_h u_0,
(I- \pi_h) \varphi(\cdot,0))_\Omega + \int_0^T ((\mathcal{L} (u -
u_h), \varphi - \pi_h \varphi)_\Omega +\gamma  s(u_h,\pi_h
\varphi))~\mbox{d}t\\
 \leq C \left(h \|u_0 - \pi_h u_0\|+ \int_0^T (\inf_{v_h \in V_h}  h  \|f - \bfbeta \cdot
  \nabla u_h - v_h\|_\Omega + \gamma \beta_\infty^{\frac12} h^{\frac12} |u_h|_s )~\mbox{d}t\right) \sup_{t
    \in  (0,T)} \|\varphi(t)\|_{H^1(\Omega)}.
\end{multline*}
We end the proof by applying the stability \eqref{eq:stab_adjoint}.
\end{proof}
\begin{remark}
A posteriori error estimates in negative norms for stationary first order pde was
introduced in \cite{HMSW99} and the case of transient problems using
stabilized FEM in \cite{Bu14}.
Observe that this a posteriori error estimate can not in general be sharp, indeed
for a smooth solution, by Theorem \ref{thm:error_bounds} we get
$O(h^{k+1})$ convergence in the dual norm. This follows by observing
that since we may take $v_h=\partial_t u_h$ and $f = \mathcal{L} u$,
\[
\inf_{v_h \in V_h}  h  \|f - \bfbeta \cdot
  \nabla u_h - v_h\|_\Omega \leq h  \|\mathcal{L} (u - u_h)\|_\Omega 
\] 
and then applying the second bound of Theorem \ref{thm:error_bounds}. We see that compared to
the $L^2$-estimate we have lost another power $h^{\frac12}$. Sharp
residual type a posteriori error estimates in the $L^2$-norm for transport
equations in dimension $>1$, so far 
to the best of my knowledge, have only been obtained under a saturation
assumption and using a stabilized finite element method, or a dG
method with upwind flux \cite{Bu09}.
\end{remark}
\begin{theorem} (A priori error estimate for rough solutions)
Let $u$ be the solution of \eqref{eq:1st_order} with $f \in L^2(0,T;L^2(\Omega))$
and $u_0 \in L^2(\Omega)$ and $u_h$ that of
\eqref{eq:stab_Galerkin} with $\gamma>1$. Then there holds
\[
\sup_{t \in [0,T)} \|(u-u_h)(\cdot,t)\|_{V'} \leq C_\beta (\zeta(\gamma)+1) h^{\frac12}
(\|f\|_{L^2(0,T;L^2(\Omega))} + \|u_0\|_{\Omega}),
\]
with $\zeta(\gamma) = \gamma^{\frac12} + \gamma^{-\frac12}$.
\end{theorem}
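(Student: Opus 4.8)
The plan is to feed the stability estimates of Theorem~\ref{thm:stab_stab} and Corollary~\ref{cor:material_stab} into the a posteriori bound of Proposition~\ref{prop:apost}, all applied to the data functional $F = (f,\cdot)_\Omega$. For this particular $F$ one reads off from \eqref{eq:Fdef} that $\|F\|_0 \le \|f(t)\|_\Omega$ and hence $\|F\|_h \le \|F\|_0 \le \|f(t)\|_\Omega$, and since $\|u_h(0)\|_\Omega = \|\pi_h u_0\|_\Omega \le \|u_0\|_\Omega$, the two cited results give, after absorbing the harmless factors $h\le 1$, $\beta_\infty$, $\|\nabla\bfbeta\|_\infty$ and $T$ into $C_\beta$, the bounds $\gamma\int_0^T |u_h|_s^2~\mbox{d}t \le C_\beta(\|f\|_{L^2(0,T;\Omega)}^2 + \|u_0\|_\Omega^2)$ and $\int_0^T \|h^{\frac12}\mathcal{L}u_h\|_\Omega^2~\mbox{d}t \le C_\beta\,\zeta(\gamma)^2(\|f\|_{L^2(0,T;\Omega)}^2 + \|u_0\|_\Omega^2)$.

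Next I would estimate the three terms on the right of Proposition~\ref{prop:apost} one by one. The initial-data term is immediate from $L^2$-stability of $\pi_h$: $h\|u_0-\pi_h u_0\|_\Omega \le h\|u_0\|_\Omega \le h^{\frac12}\|u_0\|_\Omega$. For the residual term the key move is to pick the admissible test function $v_h=\partial_t u_h\in V_h$ inside the infimum, so that $h\inf_{v_h\in V_h}\|f-\bfbeta\cdot\nabla u_h - v_h\|_\Omega \le h\|f-\mathcal{L}u_h\|_\Omega \le h\|f\|_\Omega + h^{\frac12}\|h^{\frac12}\mathcal{L}u_h\|_\Omega$; integrating in time, applying Cauchy--Schwarz on $(0,T)$ and inserting the material-derivative bound above yields $\int_0^T h\inf_{v_h}\|f-\bfbeta\cdot\nabla u_h - v_h\|_\Omega~\mbox{d}t \le C_\beta\bigl(h + h^{\frac12}\zeta(\gamma)\bigr)(\|f\|_{L^2(0,T;\Omega)}+\|u_0\|_\Omega)$. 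Finally, the stabilization term is handled by Cauchy--Schwarz in time and Theorem~\ref{thm:stab_stab}: $\int_0^T \gamma h^{\frac12}|u_h|_s~\mbox{d}t \le \gamma h^{\frac12}T^{\frac12}\bigl(\int_0^T|u_h|_s^2~\mbox{d}t\bigr)^{\frac12} \le C_\beta\,\gamma^{\frac12}h^{\frac12}(\|f\|_{L^2(0,T;\Omega)}+\|u_0\|_\Omega)$.

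Collecting the three contributions, and using $\gamma>1$, $h\le 1$ and $\gamma^{\frac12}\le\zeta(\gamma)$ to consolidate the powers of $\gamma$, every term is dominated by $C_\beta(\zeta(\gamma)+1)\,h^{\frac12}(\|f\|_{L^2(0,T;\Omega)}+\|u_0\|_\Omega)$, where the unweighted ``$+1$'' records exactly the contributions of the $\pi_h u_0$ error and of the $h\|f\|_\Omega$ piece of the residual. Taking the supremum over $\psi\in V\setminus 0$ converts the left-hand side of Proposition~\ref{prop:apost} into $\|(u-u_h)(\cdot,T)\|_{V'}$, and since $T>0$ was arbitrary the same argument with $T$ replaced by any $t\in[0,T)$ gives the stated supremum-in-time bound. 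The main (and really only) obstacle is the bookkeeping of the $\gamma$-weights: the $O(h^{-\frac12})$ loss built into Corollary~\ref{cor:material_stab} is precisely what caps the residual term at order $h^{\frac12}$, and one should note that for rough data $\|h^{\frac12}\mathcal{L}u_h\|_{L^2(0,T;\Omega)}$ is only $O(1)$ rather than $O(h^{\frac12})$ as in the smooth case of Theorem~\ref{thm:error_bounds}, so no further gain is available; it is also worth checking that Corollary~\ref{cor:material_stab} applies verbatim under the mere assumption $f\in L^2(0,T;\Omega)$, which it does since its statement only involves $\|F\|_0$ and $\|F\|_h$.
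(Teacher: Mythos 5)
Your proof is correct and follows essentially the same route as the paper: apply Proposition~\ref{prop:apost}, use Cauchy--Schwarz in time, and close with the energy stability of Theorem~\ref{thm:stab_stab}. The only (cosmetic) difference is in the residual term: the paper bounds $\inf_{v_h}h\|f-\bfbeta\cdot\nabla u_h-v_h\|_\Omega^2$ directly via \eqref{eq:stab_bound}, leaving only $|u_h|_s$ and $\|u_h\|_\Omega$ to be controlled by Theorem~\ref{thm:stab_stab}, whereas you take $v_h=\partial_t u_h$ and invoke Corollary~\ref{cor:material_stab} --- itself a packaged consequence of those same two ingredients --- which is equally valid, applies under $f\in L^2(0,T;\Omega)$ as you check, and produces the same $\zeta(\gamma)$ dependence and the stated $O(h^{\frac12})$ rate.
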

\begin{proof}
By definition 
\[
\|u-u_h\|_{V'} =  \sup_{w \in V\setminus 0} \frac{(u - u_h,w)_\Omega  }{\|w\|_{V}}.
\]
Applying Proposition \ref{prop:apost} we see that, after a Cauchy-Schwarz
inequality in time, for any $T>0$,
\[
\|(u-u_h(\cdot,T)\|_{V'} \leq  C_\beta h \| u_0 - \pi_h
u_0\|_\Omega  + C_\beta h^{\frac12} T^{\frac12} \left(\int_0^T (\inf_{v_h \in V_h}  h  \|f - \bfbeta \cdot
  \nabla u_h - v_h\|^2_\Omega + \gamma^2 |u_h|^2_s
  )~\mbox{d}t\right)^{\frac12}.
\]
Then noting that by \eqref{eq:stab_bound} there holds
\[
\inf_{v_h \in V_h}  h  \|f - \bfbeta \cdot
  \nabla u_h - v_h\|^2_\Omega \leq  h \|f\|_\Omega^2 +C_s( |u_h|_s^2 +
  h \|\nabla \bfbeta\|^2_\infty \|u_h\|^2_\Omega)
\]
we see that all the a posteriori terms depending on $u_h$ are either on
the form $|u_h|_s$ or on the form $\|u_h\|^2_\Omega$ and we conclude
by applying Theorem \ref{thm:stab_stab}.
\end{proof}
\subsection{Time discretization and stabilized methods}\label{subsec:time}
As a rule of thumb any time integrator with non-trivial imaginary
stability boundary extending into the complex plane will be stable and accurate in the sense \eqref{eq:error_stab},
 under a CFL condition depending on $\bfbeta$ and $\gamma$.
In particular any time discretization method allowing for a time discrete version of
an energy estimate of the
type in Theorem \ref{thm:stab_stab} may be applied and will lead to
optimal error estimates similar to those above. This includes all
A-stable schemes, backward
differentiation methods of first and second order, the Crank-Nicolson
method. Explicit methods with good stability properties such as
explicit strongly stable Runge-Kutta (RK) methods of order higher
than, or equal to, 3 are stable \cite{ZS04,ZS10,BEF10,XSZW19,XSZ20}.
Similar stability results are expected to hold for Adams-Bashforth
(AB) methods of order 3, 4, 7, 8 under standard hyperbolic CFL, 
$\delta t \leq Co\, h$, where $\delta t$ denotes the timestep and $Co$
the Courant number. See for instance \cite{HV03} for a discussion of
time-discretization of advection--diffusion equation, 
\cite{GFR15} for a discussion of the stability boundaries of AB
methods and \cite{BG20} for numerical experiments using AB3. All these methods
are energy stable regardless of whether or not stabilization is added.  The second order
RK method is energy stable under hyperbolic CFL only for
piecewise affine approximation and with added stabilization of the
form \eqref{eq:grad_jump} \cite{BEF10} (for dG FEM and affine approximation upwind
stabilization must be added \cite{ZS04}). In the general case (no stabilization,
higher polynomial approximation) the RK2 method is
stable only under a slightly more strict $CFL$ condition, indeed one
needs to assume $dt \leq Co\, h^{\frac43}$, with $Co$ fixed, but small
enough. This condition is the same for both cG and dG methods (see
\cite{ZS04,BEF10}). Recently an
analysis of the second order backward differentiation formula and the
Crank-Nicolson method (AB2) with
convection extrapolated to second order from previous time steps was
proposed for the discretization of \eqref{eq:stab_Galerkin}
\cite{BG20}. It was shown that these schemes are stable under similar
conditions as the RK2 scheme.
Such multi step schemes are particularly appealing in the context of IMEX methods for
convection--diffusion and hence provide a one-stage alternative to the RK2 IMEX
method analysed in \cite{BE12}.

\section{Weighted error estimates}\label{sec:weight}
In this section we will consider the slightly more technically
advanced case of weighted estimates. The idea is to show that
stabilization makes information follow the characteristics similarly as in
the physics. This means that for solutions with a localized
sharp layer, the 
dependence of a local error in the smooth zone on the regularity of the exact solution decreases
exponentially with the distance to the singularity. Hence a locally
large gradients in the solution can not
destroy the solution globally. This is not the case for approximations
produced using cG FEM without stabilization. These results touch at
the very essence of stabilized FEM, unfortunately their proofs are
quite technical and therefore these results in my opinion have
received less attention than they deserve. Here we try to give the 
simplest possible exposition of these ideas, without striving for optimality of
exponential decay or generality of meshes. We let the domain be infinite ($L =
\infty$) and let $u_0$ have compact support. To simplify the
discussion assume that $\bfbeta\equiv e_x$, where $e_x$ is the Cartesian unit vector in the
$x$-direction, so that $\bfbeta \cdot
\nabla u = \partial_x u$. Since here $\beta_\infty = 1$, below the 
dependence on the speed will not be tracked. First the
  case of a globally smooth solution will be considered (Theorem
  4). The objective is to obtain an estimate for the error in some
  subdomain $\Omega_0(t) \subset \Omega$ defined as
\[
\Omega_0(t) := \{\bfx \in \Omega: |\bfx_0 + \bfbeta t - \bfx  | < r_0 \}
\]
for some $\bfx_0 \in \Omega$ and some $r_0>0$.
The derivatives of $u$ are assumed to be moderate in a neighbourhood
of $\Omega_0$ and we will prove that the accuracy in this subdomain 
is independent of large derivatives in other parts of the domain, provided
they are sufficiently far away, relative to the mesh size. This
is achieved using weights so that the effect of portions of the domain
where locally the Sobolev norm is large decays exponentially with the
distance to $\Omega_0$. Then we will show how the arguments of the
smooth case can be used to prove accuracy in $\Omega_0$ in the case where the solution is locally
only $L^2$ in the far field (Corollary 2). The key message is that the
local accuracy of the approximation depends only on the local
smoothness of the exact solution and that perturbations due to
roughness in the solution is exponentially damped, except along characteristics. Finally we will discuss how
the arguments can be extended to bounded domains with weakly imposed
boundary condition and time discretization.


Let $\varphi\in C^{k+1}(\Omega)$ be a smooth positive function defined using polar/spherical coordinates,
depending only on $r = |\bfx_0 - \bfx|$, with $\varphi'(r) \leq 0$, $\varphi(r) = 1$,
$r\leq r_0$, $\varphi(r) \sim \exp(-(r-r_0)/\sigma)$, $r>r_0$, with $\sigma = K\sqrt{h}$, $K>1$, and for some $C>0$,
\[
|\partial_r^l \varphi(r)| \leq C \sigma^{-l} \varphi(r), \quad l \ge 1.
\]
\begin{remark}
For the case $k=1$ we only require $\varphi \in C^1(\Omega)$. An
example of such a function with $r_0=1$ and $\sigma=5$ is given in
Fig. \ref{fig:bump} for illustration.
\begin{figure}
\begin{center}
\includegraphics[width=0.5\textwidth]{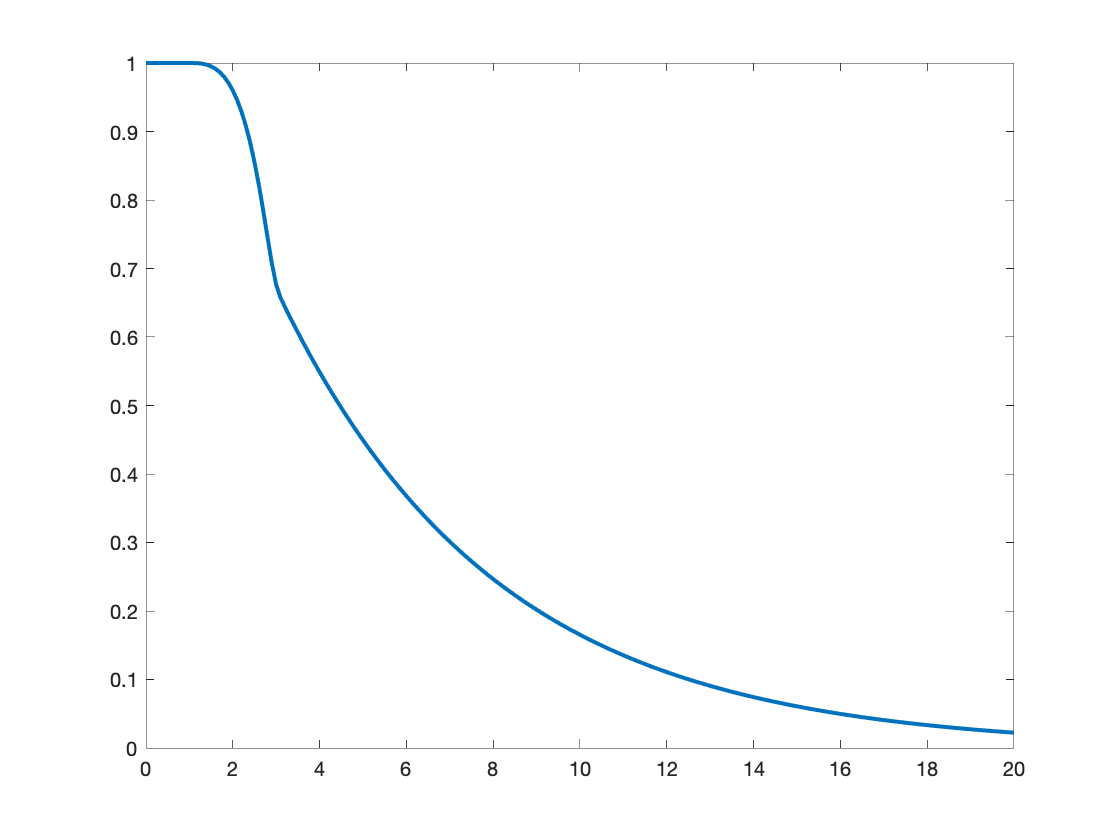} 
\end{center}
\caption{Example of the radial cross section of $\varphi \in
  C^1(\Omega)$ with $r_0=1$ and $\sigma=5$.}\label{fig:bump}
\end{figure}
\end{remark}

%
Define $\varpi(\bfx) = \varphi(\bfx - \bfbeta t)$ then, since $\varpi$
follows the characteristics $\mathcal{L}
\varpi = 0$,
and
\begin{equation}\label{eq:varpi_bound}
|D^l\varpi| \leq C \sigma^{-l} \varpi, \quad l \ge 1
\end{equation}
where the derivatives are taken with respect to space or time.
The objective is to prove stability and error estimates in the weighted norm
\[
\|v\|_\varpi:= \|\varpi v\|_\Omega.
\]
The same notation will be used occasionally below with different weight functions.
The rationale for the design of the weight function is that for all $v
\in L^{\infty}(0,T;L^2(\Omega))$ with $\mathcal{L} v \in
L^2(0,T;\Omega)$, by partial integration in space and
time,
\[
\int_0^T (\partial_t v  , \varpi^2 v)_\Omega = \|v(\cdot,
T)\|_\varpi^2 -  \|v(\cdot,
0)\|_\varpi^2 - \int_0^T (v  , \partial_t\varpi^2 v+\varpi^2 \partial_t v)_\Omega ~\mbox{d}t
\]
and
\[
(\bfbeta \cdot \nabla v, \varpi^2 v)_\Omega = - (v,(\bfbeta \cdot \nabla \varpi^2)
v +\varpi^2 \bfbeta \cdot \nabla v)_\Omega,
\]
there holds
\begin{alignat*}{1}
\int_0^T (\mathcal{L} v, \varpi^2 v)_\Omega ~\mbox{d}t & =
\|v(\cdot,T)\|_\varpi^2 -  \|v(\cdot,0)\|_\varpi^2\\
& - \int_0^T (v, \underbrace{(\mathcal{L}\varpi^2)}_{=0} v)_\Omega
+  (v, \varpi^2 \mathcal{L} v)_\Omega ~\mbox{d}t.
\end{alignat*}
Hence
\begin{equation}\label{eq:cont_weight_stab}
\int_0^T (\mathcal{L} v, \varpi^2 v)_\Omega ~\mbox{d}t =
\frac12\|v(\cdot,T)\|_\varpi^2 -  \frac12\|v(\cdot,0)\|_\varpi^2
\end{equation}
and therefore the following stability is satisfied by the continuous
equation, \eqref{eq:1st_order}, $\forall \sigma>0$,
\begin{equation}
\frac12\|u(\cdot,T)\|_\varpi^2 \leq  \frac12 \|u(\cdot,0)\|_\varpi^2 +
\int_0^T \|f\|_{\varpi}  \|u\|_\varpi ~\mbox{d}t
\end{equation}
from which we conclude
\[
\sup_{t \in (0,T)} \|u(\cdot,t)\|_\varpi \leq \|u(\cdot,0)\|_\varpi +
2 \int_0^T \|f\|_{\varpi}  ~\mbox{d}t.
\]
This relation expresses that the solution is transported along the
characteristics. The influence across
characteristics will be damped exponentially as
$\exp(-d/\sigma)$. However in the continuous case, since the bound
holds for all $\sigma>0$ the cut-off is sharp.

The aim is to make the error analysis for the solution of
\eqref{eq:stab_Galerkin} reproduce this type of
localization.
For the purposes of analysis we introduce the weighted stabilization operator 
\[
s_\varpi(v_h,w_h) = \sum_{F \in \mathcal{F}} \int_F h_F^2 \varpi^2 
\jump{\nabla v_h} \jump{\nabla w_h} ~\mbox{d} s,\mbox{ with semi-norm
} |w|_{s,\varpi} := s_\varpi(w,w)^{\frac12}
\]
and note that $s(v_h,\varpi^2 w_h) = s_\varpi(v_h,w_h)$.
Also recall the following weighted versions of \eqref{eq:stab_bound}
from  \cite[Lemma
  3.1, equation (3.1) and (3.2)]{BGL09}, here $\bfbeta_0\vert_{\ele} \in \mathbb{R}^n$ is some piecewise
  constant per element,
\begin{equation}\label{eq:stab_weight}
\|h^{\frac12} (\bfbeta_0 \cdot \nabla v_h
- \pi_h \bfbeta_0 \cdot \nabla v_h)\|^2_{\varpi} \leq C_{ws} ||\bfbeta_0| v_h|^2_{s,\varpi}
\end{equation}
and
\begin{equation}\label{eq:stab_weight2}
\|h^{\frac12} (\bfbeta \cdot \nabla  (\varpi^2 v_h)
- \pi_h(\bfbeta \cdot \nabla  (\varpi^2 v_h)))\|^2_{\varpi^{-1}} \leq C_{ws} |v_h|^2_{s,\varpi} + C_\beta
K^{-2} \|v_h\|^2_\varpi.
\end{equation}
The second bound differs from the bound in \cite {BGL09}, since there
the derivative of $v_h$ appears in the second term of the right hand side. The
proof however is similar. For completeness we detail it in Appendix.
We will need to use approximation in the weighted norm and therefore
collect some results on the $L^2$-projection in the following
Lemmas. The first one is taken from \cite{Bo06} and we refer to this
reference for the proof. The following two are
variations on results from \cite{BGL09} and for completeness we give
the proofs in Appendix. We note that all the above inequalities hold
both for the weight $\varpi$ and $\varpi^{-1}$, since by the construction of
the weight,
\[
|\nabla \varpi^{-1}| = |\varpi^{-2} \nabla \varpi| \leq C \varpi^{-2}
\sigma^{-1} \varpi = C 
\sigma^{-1} \varpi^{-1}.
\]
It follows that \eqref{eq:varpi_bound} is satisfied also for $\varpi^{-1}$.
\begin{lemma}(Stability $L^2$-projection)
Let $\pi_h$ denote the $L^2$-projection onto $V_h$. Then, if $\phi$ is a
function satisfying
\[
|\nabla \phi(x)| \leq \nu h^{-1} |\phi(x)|,
\]
for some $\nu>0$, sufficiently small then
there holds
\begin{equation}\label{L2_approx1}
\| \pi_h v\|_\phi \leq C\|v \|_\phi,
\end{equation}
\begin{equation}\label{L2_approx2}
\|\nabla\pi_h v\|_\phi \leq C \|\nabla v \|_\phi
\end{equation}
and
\begin{equation}\label{L2_approx3}
\|\nabla \pi_h v\|_\phi \leq C h^{-1} \|v\|_\phi , \quad \forall
v \in H^1(\Omega).
\end{equation}
\end{lemma}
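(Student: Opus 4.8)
The three claims are weighted-norm analogues of the standard local stability and inverse estimates for the $L^2$-projection, and the natural route is to reduce each of them to its unweighted counterpart by exploiting the slow variation of the weight $\phi$ on the scale of a mesh element. The essential observation is that if $|\nabla\phi|\le\nu h^{-1}|\phi|$ with $\nu$ small, then on any element $S$ (and on a fixed-size patch $\omega_S$ of neighbouring elements, needed because the $L^2$-projection is globally coupled) the ratio $\sup_{\omega_S}\phi/\inf_{\omega_S}\phi$ is bounded by a constant close to $1$, uniformly in $h$; indeed integrating the gradient bound along a path of length $O(h)$ gives $\phi(x)/\phi(y)\le e^{C\nu}$ for $x,y$ in the patch. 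Thus $\phi$ is, up to a uniformly bounded multiplicative factor, constant on each patch, and weighted norms over a patch are equivalent to $\phi(x_S)$ times unweighted norms with a constant independent of $h$.

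To prove \eqref{L2_approx1} I would use the well-known local-in-nature estimate for the $L^2$-projection on shape-regular, locally quasi-uniform meshes, namely that $\pi_h$ has exponentially decaying influence away from each element (or, more elementarily, the superapproximation/locality property that allows one to write $\|\pi_h v\|_S \le C\sum_{S'}\rho^{\mathrm{dist}(S,S')}\|v\|_{S'}$ with $\rho<1$). Multiplying by $\varpi$, moving the weight inside using the patchwise equivalence above, summing the geometric series in the distance (the extra factor $e^{C\nu\,\mathrm{dist}(S,S')/h}$ coming from the weight is controlled by $\rho<1$ once $\nu$ is small enough, which is exactly where the smallness of $\nu$ enters), and summing over $S$ yields $\|\pi_h v\|_\phi\le C\|v\|_\phi$. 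Estimate \eqref{L2_approx2} follows the same pattern applied to $\nabla\pi_h v$: one inserts a patchwise-constant approximation of $\nabla v$, uses the inverse inequality \eqref{eq:inverse_ineq} together with the $L^2$-stability just proved to handle the projection of the difference, and invokes the commutator/superapproximation estimate $\|\nabla(\pi_h v - v)\|\le Ch^k\ldots$ localized to patches; alternatively one uses that $\pi_h$ is $H^1$-stable on quasi-uniform meshes and repeats the weight-localization argument. Finally \eqref{L2_approx3} is immediate from \eqref{L2_approx1} and the inverse inequality: $\|\nabla\pi_h v\|_\phi\le Ch^{-1}\|\pi_h v\|_\phi\le Ch^{-1}\|v\|_\phi$, again using that $\phi$ is patchwise equivalent to a constant so that \eqref{eq:inverse_ineq} may be applied elementwise with the weight frozen.

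The main obstacle is bookkeeping the interaction between the nonlocality of the $L^2$-projection and the exponential weight: one must make sure that the geometric decay rate $\rho$ of the projection's influence function genuinely dominates the exponential growth $e^{C\nu\,d/h}$ of the weight over distance $d$, which forces the hypothesis that $\nu$ be sufficiently small (smaller than $\log(1/\rho)/C$). Everything else is a routine combination of shape-regularity, local quasi-uniformity, the scaled trace inequality \eqref{eq:trace}, and the inverse inequality \eqref{eq:inverse_ineq}; since the paper explicitly defers these proofs to an appendix and cites \cite{BGL09,Bo06}, I would present the argument at the level of detail above and relegate the patchwise weight-freezing estimates and the summation of the geometric series to the appendix.
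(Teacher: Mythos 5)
Your argument is correct and is essentially the proof that the paper delegates to its citations: the paper does not prove this lemma itself but takes the three bounds verbatim from \cite{Bo06} (see also the appendix of \cite{EJ95}), and those proofs rest on exactly the mechanism you describe --- the exponentially decaying influence of the $L^2$-projection combined with the fact that a weight satisfying $|\nabla\phi|\leq \nu h^{-1}|\phi|$ varies by at most a factor $e^{C\nu d/h}$ over a distance $d$, so the geometric series converges once $\nu$ is small enough relative to the decay rate. The only points worth tightening are notational (the decay factor should read $\rho^{\mathrm{dist}(S,S')/h}$, i.e.\ distance counted in element layers) and, for \eqref{L2_approx2}, the need to insert a quasi-interpolant that is reproduced by $\pi_h$ so that the inverse estimate together with \eqref{L2_approx1} and a weighted interpolation bound closes the argument; neither affects correctness.
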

\begin{proof}
The estimates \eqref{L2_approx1}-\eqref{L2_approx3} taken verbatim
from \cite[bounds (1.7) - (1.9)]{Bo06} (see also
\cite[Appendix]{EJ95}).
\end{proof}
The above stability estimates allows us to prove bounds on the
$L^2$-error in the weighted norm.
\begin{lemma}(Weighted approximation)\label{lem:weight_approx}
Let $\pi_h$ denote the $L^2$-projection onto $V_h$. Then for
$h^{\frac12}/K$ sufficiently small and $I_\delta = [t-\delta t, t+
\delta t]\cap [0,T]$ with $\delta t \in \mathbb{R}^+$,
$\delta t \sim h$, there holds
\begin{equation}\label{eq:move_varpi}
\max_{(x,t) \in \ele \times I_\delta} \varpi(x,t) \|v\|_{\ele} \leq 2  \min_{t
  \in I_\delta }\| v \varpi(\cdot,t)\|_{\ele}, \quad \forall v \in L^2(\ele),
\end{equation}
\begin{equation}\label{eq:error_est}
\|(v - \pi_h v)\|_\varpi + h \|\nabla(v - \pi_h v)\|_\varpi \leq C h^{k+1}
\|D^{k+1} v\|_\varpi, \quad \forall v \in H^{k+1}(\Omega)
\end{equation}
and
\begin{equation}\label{eq:stab_est}
|v - \pi_h v|_{s,\varpi} \leq C h^{k+\frac12} \|D^{k+1} v\|_\varpi \quad \forall v \in H^{k+1}(\Omega).
\end{equation}
\end{lemma}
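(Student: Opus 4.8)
The plan is to establish the three estimates in order, each building on the previous one and on the stability of the $L^2$-projection in weighted norms (the preceding Lemma).

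\emph{Step 1: the ``moving weight'' estimate \eqref{eq:move_varpi}.} First I would exploit that $\varpi$ only varies slowly on the scale of a single element over a time window of width $\delta t \sim h$. Since $\varpi(x,t) = \varphi(x - \bfbeta t)$ with $|D\varpi| \leq C\sigma^{-1}\varpi = C(K\sqrt h)^{-1}\varpi$, for any two points $(x,t),(x',t')$ in $\ele \times I_\delta$ the geodesic distance between them in space-time is bounded by $C h$ (the element diameter plus $\delta t$), so
\[
\frac{\varpi(x,t)}{\varpi(x',t')} \leq \exp\!\Big(C \frac{h}{K\sqrt h}\Big) = \exp\!\Big(\frac{C\sqrt h}{K}\Big) \leq 2
\]
provided $\sqrt h / K$ is small enough. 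Applying this with $(x,t)$ the maximizer on the left and integrating over $\ele$ at a fixed $t' \in I_\delta$ realizing the minimum on the right gives \eqref{eq:move_varpi}. This is the technical workhorse: it lets one freeze the weight on each element at the cost of a factor $2$.

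\emph{Step 2: weighted approximation \eqref{eq:error_est}.} The idea is to reduce to the standard unweighted bound \eqref{eq:L2approx} element by element. On each $\ele$, write $\varpi_\ele := \max_{x\in\ele}\varpi(x,t)$ (constant in $x$); then $\|(v-\pi_h v)\|_{\varpi,\ele} \leq \varpi_\ele \|v - \pi_h v\|_\ele$. The subtlety is that $\pi_h$ is global, not element-local; here I would instead compare $\pi_h v$ to the \emph{weighted} $L^2$-projection or, more simply, use a standard localization argument: $\|v-\pi_h v\|_\ele \leq \|v - I_h v\|_\ele$ up to controllable terms, where $I_h$ is a local (Scott--Zhang type) interpolant, and then sum $\varpi_\ell^2\, h^{2(k+1)} |v|_{H^{k+1}(\ele)}^2$ over $\ell$; by Step 1, $\varpi_\ell \leq 2\,\varpi(x,t)$ for any $x\in\ell$, so $\sum_\ell \varpi_\ell^2 |v|_{H^{k+1}(\ell)}^2 \leq 4\|\varpi D^{k+1}v\|_\Omega^2$. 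The cleaner route, and the one consistent with the references, is to invoke the weighted stability \eqref{L2_approx1}--\eqref{L2_approx2} of $\pi_h$ directly: $\|v-\pi_h v\|_\varpi \leq \|v - I_h v\|_\varpi + \|\pi_h(v - I_h v)\|_\varpi \leq C\|v - I_h v\|_\varpi$, and then estimate $\|v - I_h v\|_\varpi$ by the element-wise unweighted bound together with \eqref{eq:move_varpi}. The gradient estimate follows the same pattern using \eqref{L2_approx2} and the fact, noted in the excerpt, that $\varpi^{-1}$ also satisfies \eqref{eq:varpi_bound}, so one may freely pass the weight in and out.

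\emph{Step 3: weighted stabilization seminorm \eqref{eq:stab_est}.} Here I would bound $|v - \pi_h v|_{s,\varpi}^2 = \sum_F \int_F h_F^2 \varpi^2 \jump{\nabla(v-\pi_h v)}^2$ by the scaled trace inequality \eqref{eq:trace}: on each face $F = \bar\ele_1\cap\bar\ele_2$, $\|\nabla(v-\pi_h v)\|_F^2 \leq C(h^{-1}\|\nabla(v-\pi_h v)\|_\ele^2 + h\|D^2(v-\pi_h v)\|_\ele^2)$, and since $\pi_h v$ is piecewise polynomial, $D^2 \pi_h v$ contributes through an inverse inequality while $D^2 v$ is genuinely present only when $k\geq 1$. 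Freezing $\varpi$ on each $F$ via Step 1 and combining with the already-established \eqref{eq:error_est} (for the $\|\nabla\cdot\|$ term) and a direct element-wise bound $h\|D^2(v-\pi_h v)\|_\ele \leq C h^k \|D^{k+1}v\|_\ele$ (valid for $k\geq1$; for $k=1$ one uses that $D^2\pi_h v$ is handled by interpolation error plus inverse estimate) yields $h_F^2\varpi^2\|\nabla(v-\pi_h v)\|_F^2 \leq C h^{2k+1}\|\varpi D^{k+1}v\|_{\omega_F}^2$, and summing over faces with finite overlap gives the claim.

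\emph{Main obstacle.} The one genuinely delicate point is that $\pi_h$ is a \emph{global} operator, so none of the above localizations apply to $\pi_h v$ directly — it is only through the weighted stability bounds \eqref{L2_approx1}--\eqref{L2_approx3} (imported from \cite{Bo06}) that one can decouple the weight from the projection. Everything else — the trace inequality, the inverse inequality, unweighted approximation — is routine once Step 1 allows the weight to be treated as locally constant. I would therefore organize the proof so that Step 1 is proved first and cited repeatedly, and lean on the preceding Lemma for every manipulation involving $\pi_h$ itself.
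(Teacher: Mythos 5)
Your plan follows essentially the same route as the paper's proof: \eqref{eq:move_varpi} via the slow variation of $\varpi$ on an $O(h)$ space--time patch (the paper uses an additive triangle-inequality version of your multiplicative $\exp(C\sqrt{h}/K)\le 2$ bound), then the weighted $L^2$-stability of $\pi_h$ to reduce to the local Lagrange interpolant for \eqref{eq:error_est}, and the trace inequality plus add-and-subtract of the interpolant with an inverse estimate, combined with \eqref{eq:error_est} and \eqref{eq:move_varpi}, for \eqref{eq:stab_est}. The argument is correct and matches the paper's.
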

For the analysis we also need the following interpolation estimates on
weighted discrete functions.
\begin{lemma}(Super approximation)\label{lem:super}
Let $v_h \in V_h$. Assume that $h^{\frac12}/K$ is sufficiently small. Then there holds
\begin{equation}\label{super1}
\|\varpi^2 v_h - \pi_h (\varpi^2 v_h)\|_{\varpi^{-1}} + h \|\nabla (\varpi^2
v_h - \pi_h (\varpi^2
v_h))\|_{\varpi^{-1}}\leq C h^{\frac12}
K^{-1} \|v_h \|_{\varpi}
\end{equation}
and 
\begin{equation}\label{super2}
\left(\sum_{\ele \in \mathcal{T}} \|\varpi^{-1}\nabla(\varpi^2 v_h - \pi_h (\varpi^2 v_h))\|_{\partial
  \ele}^2 \right)^{\frac12} \leq C h^{-1} K^{-1} \|v_h\|_{\varpi}.
\end{equation}
\end{lemma}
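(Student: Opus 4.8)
The plan is to reduce both estimates of the lemma to three ingredients: (a) the weighted $L^2$-projection stability \eqref{L2_approx1}--\eqref{L2_approx3} of the preceding lemma, applied with the weight $\varpi^{-1}$; (b) elementwise Lagrange interpolation estimates for the function $\varpi^2 v_h$, which is smooth on each simplex (only its global regularity is merely $C^0$); and (c) the Leibniz rule together with the inverse inequality \eqref{eq:inverse_ineq} to absorb the spatial derivatives of $v_h$. First I would record that \eqref{L2_approx1}--\eqref{L2_approx3} apply with $\phi=\varpi^{-1}$: as noted above \eqref{eq:varpi_bound} holds for $\varpi^{-1}$, so $|\nabla\varpi^{-1}|\le C\sigma^{-1}\varpi^{-1}=C(h^{\frac12}/K)\,h^{-1}\varpi^{-1}$, and the smallness hypothesis holds once $h^{\frac12}/K$ is small. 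I would also record two consequences of \eqref{eq:varpi_bound} (equivalently \eqref{eq:move_varpi} at a fixed time) used throughout: for $h^{\frac12}/K$ small both $\varpi$ and $\varpi^{-1}$ vary by at most a factor $2$ over any element, so a factor $\varpi^{\pm2}$ may be pulled out of an elementwise $L^2$-norm; and, combined with \eqref{eq:inverse_ineq}, this gives the weighted inverse inequality $\|\varpi^{-1}\nabla w_h\|_\ele\le Ch^{-1}\|\varpi^{-1}w_h\|_\ele$ for $w_h\in V_h$. Since $\varphi$ is as regular as needed, $I_h(\varpi^2 v_h)\in V_h$ (the nodal interpolant) is defined and obeys $\|D^m(\varpi^2 v_h-I_h(\varpi^2 v_h))\|_\ele\le Ch^{k+1-m}\|D^{k+1}(\varpi^2 v_h)\|_\ele$ for $m=0,1,2$.

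For \eqref{super1} I would set $\zeta:=\varpi^2 v_h-\pi_h(\varpi^2 v_h)$ and insert $\pm I_h(\varpi^2 v_h)$. Since $I_h(\varpi^2 v_h)-\pi_h(\varpi^2 v_h)=\pi_h(I_h(\varpi^2 v_h)-\varpi^2 v_h)$, the stability estimates \eqref{L2_approx1} and \eqref{L2_approx3} reduce the claim to bounding $\varpi^2 v_h-I_h(\varpi^2 v_h)$ in $\|\cdot\|_{\varpi^{-1}}$ and in $h\|\nabla\cdot\|_{\varpi^{-1}}$. On a fixed element, after pulling $\varpi^{-1}$ out, Leibniz writes $D^{k+1}(\varpi^2 v_h)$ as a sum over $j$ of terms carrying $j$ derivatives on $\varpi^2$ (each costing $\sigma^{-1}$, since $|D^j(\varpi^2)|\le C\sigma^{-j}\varpi^2$ by \eqref{eq:varpi_bound}) and $k+1-j$ on $v_h$ (each costing $h^{-1}$, by iterating \eqref{eq:inverse_ineq}), so that
\[ h^{k+1}\|\varpi^{-1}D^{k+1}(\varpi^2 v_h)\|_\ele\le C\sum_{j=1}^{k+1}h^{k+1}\sigma^{-j}h^{-(k+1-j)}\|\varpi v_h\|_\ele=C\sum_{j=1}^{k+1}(h^{\frac12}/K)^j\|\varpi v_h\|_\ele\le Ch^{\frac12}K^{-1}\|\varpi v_h\|_\ele, \]
the last step using $h^{\frac12}/K\le1$. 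Summing over the elements and combining with the two stability estimates gives \eqref{super1}. (Equivalently one may quote \eqref{eq:error_est}, whose proof is local and applies to $\varpi^2 v_h$ with $D^{k+1}$ read elementwise.)

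For \eqref{super2} I would apply the scaled trace inequality \eqref{eq:trace} to $\varpi^{-1}\nabla\zeta$ on each element; using $|\nabla\varpi^{-1}|\le C\sigma^{-1}\varpi^{-1}$ and $\sigma^{-1}h^{\frac12}=K^{-1}\le1$ to absorb the cross term leaves
\[ \sum_{\ele\in\mathcal{T}}\|\varpi^{-1}\nabla\zeta\|_{\partial\ele}^2\le C\Big(h^{-1}\|\nabla\zeta\|_{\varpi^{-1}}^2+h\sum_{\ele\in\mathcal{T}}\|\varpi^{-1}D^2\zeta\|_\ele^2\Big). \]
The first term is $\le Ch^{-2}K^{-2}\|v_h\|_\varpi^2$ by \eqref{super1}. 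For the Hessian term I would split $\zeta=\eta_I+\eta_\pi$ with $\eta_I:=\varpi^2 v_h-I_h(\varpi^2 v_h)$ and $\eta_\pi:=I_h(\varpi^2 v_h)-\pi_h(\varpi^2 v_h)\in V_h$. On $\eta_\pi$ the weighted inverse inequality used twice together with $\|\eta_\pi\|_{\varpi^{-1}}\le Ch^{\frac12}K^{-1}\|v_h\|_\varpi$ (triangle inequality and \eqref{super1}) gives $h\sum_\ele\|\varpi^{-1}D^2\eta_\pi\|_\ele^2\le Ch^{-3}\|\eta_\pi\|_{\varpi^{-1}}^2\le Ch^{-2}K^{-2}\|v_h\|_\varpi^2$; on $\eta_I$ the interpolation bound with $m=2$ and the same Leibniz/inverse estimate as above give $\|\varpi^{-1}D^2\eta_I\|_\ele\le Ch^{-\frac32}K^{-1}\|\varpi v_h\|_\ele$ (the $j=1$ contribution dominating for $h^{\frac12}/K$ small), hence $h\sum_\ele\|\varpi^{-1}D^2\eta_I\|_\ele^2\le Ch^{-2}K^{-2}\|v_h\|_\varpi^2$. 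Adding the three contributions and taking square roots yields \eqref{super2}.

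The proof involves no deep idea; the one point that genuinely needs care is the power counting in the Leibniz expansion — a derivative landing on $v_h$ costs $h^{-1}$ while a derivative landing on $\varpi^2$ costs only $\sigma^{-1}=K^{-1}h^{-\frac12}$ — so in $h^{k+1}\|D^{k+1}(\varpi^2 v_h)\|$ the cheapest, and (for $h^{\frac12}/K$ small) therefore dominant, contribution is the one with a single derivative on $\varpi^2$, which is exactly what produces the gain $h\sigma^{-1}=h^{\frac12}K^{-1}$. All the auxiliary ingredients (weighted projection stability for the weight $\varpi^{-1}$, the $2$-slow variation of $\varpi^{\pm1}$ per element, and the resulting weighted inverse inequality) are immediate from \eqref{eq:varpi_bound} and the smallness of $h^{\frac12}/K$.
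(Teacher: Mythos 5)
Your proof is correct and follows essentially the same route as the paper: the inline Leibniz/inverse-inequality power counting you perform on $D^{k+1}(\varpi^2 v_h)$ is precisely the content of the paper's Lemma \ref{lem:Lagrange_sup}, and both arguments reduce to the Lagrange interpolant via the weighted stability of $\pi_h$ with weight $\varpi^{-1}$ before invoking it. The only (immaterial) difference is in \eqref{super2}, where you apply the trace inequality to the weighted function $\varpi^{-1}\nabla\zeta$ and split the Hessian term into $\eta_I+\eta_\pi$, whereas the paper pulls $\max_{x\in \ele}\varpi^{-1}$ out first and uses the $H^1$-stability \eqref{L2_approx2} of the projection on the discrete part; the bookkeeping lands on the same bound.
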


We will now derive a weighted stability estimate for the finite
element formulation \eqref{eq:stab_Galerkin}. First use similar arguments
as for \eqref{eq:cont_weight_stab} to obtain for any $v_h \in C^1(0,T;V_h)$,
\[
\int_0^T (\mathcal{L} v_h, \varpi^2 v_h)_\Omega ~\mbox{d}t =
\frac12 \|v_h(\cdot,T)\|_\varpi^2 -  \frac12 \|v_h(\cdot,0)\|_\varpi^2
\]
and, since $\varpi \in C^1(\Omega)$ we see that
\[
s(v_h,\varpi^2 v_h) = |v_h|_{s,\varpi}^2
\]
Therefore, 
\begin{alignat}{1}\label{eq:first_weight}
\|v_h(\cdot,T)\|_\varpi^2  + 2 \gamma\int_0^T |v_h|_{s,\varpi}^2~\mbox{d}t
& = 2 \int_0^T( (\mathcal{L} v_h,\varpi^2  v_h
)_\Omega + \gamma s(v_h,\varpi^2  v_h) )~\mbox{d}t
 +  \|v_h(\cdot,0)\|_\varpi^2.
\end{alignat}

However, since $\varpi^2 v_h\not \in V_h$ the equality
can not be used directly for the finite element formulation. We need to show
that stability similar to \eqref{eq:first_weight} can be obtained by testing by some interpolant of
$\varpi^2 v_h$. 
\begin{proposition}\label{prop:weight_stab}(Weighted stability)
Let $\gamma > 0$, $K>1$. Assume that $h^{\frac12}/K$ is sufficiently small. For all $v_h \in C^1(0,T;V_h)$ there holds 
\begin{alignat*}{1}
\|v_h(\cdot,T)\|_\varpi^2  + \gamma \int_0^T  |v_h|_{s,\varpi}^2
~\mbox{d}t   & \leq C/K^2  \int_0^T \|v_h\|_\varpi^2 ~\mbox{d}t \\
&+ 2 \int_0^T( (\mathcal{L} v_h, w_h)_\Omega + \gamma s(v_h,w_h) )~\mbox{d}t +  \|v_h(\cdot,0)\|_\varpi^2\nonumber
\end{alignat*}
where $w_h = \pi_h \varpi^2 v_h$ and the constant $C \sim \gamma + \gamma^{-1}$.
\end{proposition}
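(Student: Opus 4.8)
The plan is to bootstrap from the identity \eqref{eq:first_weight}, whose left-hand side already has the desired form, and to pass from the non-conforming test function $\varpi^2 v_h$ to its $L^2$-projection $w_h = \pi_h(\varpi^2 v_h)\in V_h$, carefully tracking the remainder $\eta := \varpi^2 v_h - w_h = (I-\pi_h)(\varpi^2 v_h)$. Using bilinearity of $(\mathcal{L} v_h,\cdot)_\Omega$ and of $s(v_h,\cdot)$ in \eqref{eq:first_weight}, one obtains
\[
\|v_h(\cdot,T)\|_\varpi^2 + 2\gamma\int_0^T |v_h|_{s,\varpi}^2\,\mbox{d}t = 2\int_0^T \big((\mathcal{L} v_h, w_h)_\Omega + \gamma s(v_h,w_h)\big)\,\mbox{d}t + \|v_h(\cdot,0)\|_\varpi^2 + R,
\]
with $R := 2\int_0^T \big((\mathcal{L} v_h,\eta)_\Omega + \gamma s(v_h,\eta)\big)\,\mbox{d}t$. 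The whole proof then reduces to the bound
\[
|R| \leq \gamma\int_0^T |v_h|_{s,\varpi}^2\,\mbox{d}t + C(\gamma+\gamma^{-1})K^{-2}\int_0^T\|v_h\|_\varpi^2\,\mbox{d}t ,
\]
after which the first term on the right is absorbed into the left-hand side, leaving the coefficient $\gamma$ in front of the stabilization seminorm.

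To bound the two contributions in $R$ I would work pointwise in time. For the material-derivative contribution, note that $\partial_t v_h\in V_h$ while $\eta\perp V_h$ in $L^2(\Omega)$, so the time part drops out and $(\mathcal{L} v_h,\eta)_\Omega = (\bfbeta\cdot\nabla v_h,\eta)_\Omega = (\bfbeta\cdot\nabla v_h - \pi_h(\bfbeta\cdot\nabla v_h),\eta)_\Omega$, again by $L^2$-orthogonality of $\eta$ against $V_h$. A weighted Cauchy--Schwarz with the splitting $1=(h^{1/2}\varpi)(h^{-1/2}\varpi^{-1})$, the weighted estimate \eqref{eq:stab_weight} applied with the constant field $\bfbeta_0\equiv\bfbeta$, and the super-approximation bound \eqref{super1} (which gives $\|\eta\|_{\varpi^{-1}} \leq C h^{1/2}K^{-1}\|v_h\|_\varpi$) then yield $|(\mathcal{L} v_h,\eta)_\Omega| \leq C K^{-1}|v_h|_{s,\varpi}\|v_h\|_\varpi$. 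For the stabilization contribution, a face-wise Cauchy--Schwarz in $s(\cdot,\cdot)$ distributing $h_F^2=(h_F\varpi)(h_F\varpi^{-1})$ gives $|s(v_h,\eta)| \leq |v_h|_{s,\varpi}\big(\sum_{F}h_F^2\|\varpi^{-1}\jump{\nabla\eta}\|_F^2\big)^{1/2}$; bounding the jumps by element-boundary traces and invoking \eqref{super2} (which controls $\big(\sum_{S}\|\varpi^{-1}\nabla\eta\|_{\partial S}^2\big)^{1/2}$ by $Ch^{-1}K^{-1}\|v_h\|_\varpi$) yields $|s(v_h,\eta)| \leq C K^{-1}|v_h|_{s,\varpi}\|v_h\|_\varpi$. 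Finally, inserting both into $R$ and applying $2ab\leq\tfrac{\gamma}{2}a^2+\tfrac{2}{\gamma}b^2$ to the first term and $2\gamma ab\leq\tfrac{\gamma}{2}a^2+2\gamma b^2$ to the second (with $a=|v_h|_{s,\varpi}$, $b\propto K^{-1}\|v_h\|_\varpi$) produces exactly the claimed bound on $|R|$, with constant of size $\gamma+\gamma^{-1}$. The hypotheses $K>1$ and $h^{1/2}/K$ small are precisely what make Lemma~\ref{lem:super} (and the weighted $L^2$-projection stability it rests on) applicable.

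I expect the stabilization term $\gamma s(v_h,\eta)$ to be the delicate point: one must verify that the jump $\jump{\nabla\eta}$ of the super-approximation error is genuinely of size $h^{-1}K^{-1}\|v_h\|_\varpi$ in the weighted face norm, with no surviving uncontrolled derivative of $v_h$ on the right-hand side. This is exactly why the present super-approximation estimate \eqref{super2}, carrying $\|v_h\|_\varpi$ rather than $\|\nabla v_h\|_\varpi$, is needed, and it is the step where the argument departs from the stationary analysis of \cite{BGL09}; the remaining steps are routine weighted Cauchy--Schwarz and Young inequalities.
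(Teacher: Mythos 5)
Your proposal is correct and follows essentially the same route as the paper's proof: the same add-and-subtract of $w_h=\pi_h(\varpi^2 v_h)$ starting from \eqref{eq:first_weight}, the same use of $L^2$-orthogonality to kill the $\partial_t v_h$ contribution and to insert $\pi_h(\bfbeta\cdot\nabla v_h)$, and the same three ingredients \eqref{eq:stab_weight}, \eqref{super1} and \eqref{super2} before absorbing with Young's inequality. The only difference is organizational (you bound the two remainder products directly and apply Young once at the end, whereas the paper splits into the terms $T_1$, $T_2$, $T_3$ with an $\epsilon$ fixed afterwards), and your identification of the weighted face estimate \eqref{super2} as the step that must carry $\|v_h\|_\varpi$ rather than a derivative of $v_h$ is exactly the delicate point the paper's argument is built around.
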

\begin{proof}
Starting from the equality \eqref{eq:first_weight} we add and subtract the finite element
formulation tested with some function $w_h$,
\begin{alignat}{1}\label{eq:second_weight}
\|v_h(\cdot,T)\|_\varpi^2  + 2 \gamma \int_0^T  |v_h|_{s,\varpi}^2 ~\mbox{d}t   & = 2\int_0^T( (\mathcal{L} v_h,
\varpi^2 v_h- w_h)_\Omega + \gamma s(v_h,\varpi^2 v_h - w_h) )~\mbox{d}t  \\
&+ 2 \int_0^T( (\mathcal{L} v_h, w_h)_\Omega + \gamma s(v_h,w_h) )~\mbox{d}t +  \|v_h(\cdot,0)\|_\varpi^2.\nonumber
\end{alignat}
We choose $w_h = \pi_h  (\varpi^2 v_h)$ to obtain, for
  an arbitrary $y_h \in V_h$
\begin{alignat*}{1}
(\mathcal{L} v_h,
\varpi^2 v_h - \pi_h (\varpi^2 v_h))_\Omega &= (\bfbeta \cdot \nabla v_h
- y_h, \varpi^2 v_h - \pi_h (\varpi^2 v_h))_\Omega \\[3mm]
&\leq \inf_{y_h \in V_h} \|h^{\frac12} (\bfbeta \cdot \nabla v_h
- y_h)\|_{\varpi} h^{-\frac12}\| (\varpi^2 v_h - \pi_h (\varpi^2 v_h))\|_{\varpi^{-1}}.
\end{alignat*}
Considering the stabilization term we see that
\[
s(v_h,\varpi^2 v_h -\pi_h (\varpi^2 v_h)) \leq |v_h|_{s,\varpi}
h \beta_\infty^{\frac12} \left(\sum_{F
  \in \mathcal{F}} \|\varpi^{-1} \jump{\nabla (\varpi^2 v_h -\pi_h (\varpi^2 v_h))}\|_F^2\right)^{\frac12}.
\]
Using the arithmetic-geometric inequality $ab \leq ((2 \epsilon)^{-1}
a^2 + (\epsilon 2^{-1}) b^2$, with $\epsilon =1$ or $2$, to split the terms in the
right hand side we obtain
\begin{alignat}{1}\label{eq:third_weight}
\|v_h(\cdot,T)\|_\varpi^2  + \frac74\gamma \int_0^T  s_\varpi(v_h,v_h)
~\mbox{d}t   & \leq \epsilon^{-1} \gamma^{-1} h^{-1}\int_0^T \underbrace{\|  (\varpi^2 v_h
- \pi_h (\varpi^2 v_h))\|_{\varpi^{-1}}^2}_{T_1} ~\mbox{d}t  \\
&+ \gamma h^2 \beta_\infty\int_0^T \underbrace{\sum_{F
  \in \mathcal{F}} \|\varpi^{-1} \jump{\nabla
  (\varpi^2 v_h -\pi_h (\varpi^2 v_h))}\|_F^2}_{T_2}~\mbox{d}t  \\
&+ \epsilon \gamma \int_0^T \underbrace{\inf_{y_h \in V_h} \|h^{\frac12} (\bfbeta \cdot \nabla v_h
- y_h)\|_{\varpi}^2}_{T_3}~\mbox{d}t  \\
&+ 2 \int_0^T( (\mathcal{L} v_h, w_h)_\Omega + \gamma s(v_h,w_h) )~\mbox{d}t +  \|v_h(\cdot,0)\|_\varpi^2.\nonumber
\end{alignat}

We need to bound the contributions $T_1$, $T_2$ and $T_3$ in terms of the quantities of the
left hand side and $\|v_h\|_\varpi$. Using \eqref{super1} immediately yields
\[
T_1 = \|  (\varpi^2 v_h
- \pi_h (\varpi^2 v_h))\|_{\varpi^{-1}}^2 \leq C K^{-2} h \|v_h\|_\varpi^2.
\]
By distribution of the integrals over the faces on simplices, splitting the jumps on the
contributions from the two sides and applying \eqref{super2} there holds
\begin{alignat*}{1}
T_2 \leq C\sum_{\ele \in \mathcal{T}}  \|\varpi^{-1} \nabla
  (\varpi^2 v_h -\pi_h (\varpi^2 v_h))\|_{\partial \ele}^2  \leq C/K^2
  h^{-2} \|v_h\|_\varpi^2.
\end{alignat*}
Finally for the term $T_3$ apply the weighted stabilization bound
\eqref{eq:stab_weight}, with $\bfbeta_0 \equiv e_x$, where $e_x$ is the Cartesian unit vector in the
$x$-direction
\[
T_3 = \inf_{y_h \in V_h} \|h^{\frac12} (\bfbeta \cdot \nabla v_h
- y_h)\|_{\varpi}^2 \leq C_{ws} |v_h|_{s,\varpi}^2.
\]
Collecting the bounds for $T_1$-$T_3$ and choosing $\epsilon = (2
C_{ws})^{-1}$ we see that
\begin{alignat}{1}\label{eq:final_stab_bound_proof}
\|v_h(\cdot,T)\|_\varpi^2  + \gamma \int_0^T   |v_h|_{s,\varpi}^2
~\mbox{d}t   & \leq (\gamma^{-1} + \gamma)  C/K^2  \int_0^T \|v_h\|_\varpi^2 ~\mbox{d}t \\
&+ 2 \int_0^T( (\mathcal{L} v_h, w_h)_\Omega + \gamma s(v_h,w_h) )~\mbox{d}t +  \|v_h(\cdot,0)\|_\varpi^2.\nonumber
\end{alignat}
\end{proof}
\begin{theorem}\label{weigh_error}
Assume that the hypothesis of Proposition \ref{prop:weight_stab} are
satisfied. Let $u \in L^\infty(0,T;H^{k+1}(\Omega))$ be the solution of \eqref{eq:1st_order} and $u_h$ the
solution of \eqref{eq:stab_Galerkin}. Then for all $T>0$ there holds
\[
\|(u - u_h)(\cdot,T)\|_{\varpi} \leq C_K h^{k+\frac12} \left(h \|D^{k+1} u
  (\cdot,T)\|_\varpi^2 + (\gamma + \gamma^{-1}) \int_0^T
  \|D^{k+1} u\|_\varpi^2 ~\mbox{d} t \right)^{\frac12}.
\]
The constant $C_K$ grows exponentially in time with coefficient
proportional to $(\gamma + \gamma^{-1}) K^{-2}$.
\end{theorem}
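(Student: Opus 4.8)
The plan is to follow the standard energy-error paradigm, now carried through in the weighted norm, exactly parallel to the proof of Theorem \ref{thm:error_bounds} but with Proposition \ref{prop:weight_stab} replacing Theorem \ref{thm:stab_stab}. First I would split the error $u - u_h = (u - \pi_h u) + (\pi_h u - u_h) =: (u - \pi_h u) + e_h$. The interpolation part is controlled directly by the weighted approximation estimates \eqref{eq:error_est} and \eqref{eq:stab_est} of Lemma \ref{lem:weight_approx}, which give $\|(u-\pi_h u)(\cdot,T)\|_\varpi \le C h^{k+1}\|D^{k+1}u(\cdot,T)\|_\varpi$ and $|u - \pi_h u|_{s,\varpi} \le C h^{k+\frac12}\|D^{k+1}u\|_\varpi$; note this already produces the $h \|D^{k+1}u(\cdot,T)\|_\varpi^2$ term in the statement (from the $h^{k+1}$ bound on the interpolation error, squared this is $h^{2k+2}$, i.e.\ $h^{2k+1}\cdot h$, matching the claimed form). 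So the work is entirely in bounding $e_h$ in the weighted norm.

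Next I would set up the error equation for $e_h$. Since $e_h \in C^1(0,T;V_h)$, Proposition \ref{prop:weight_stab} applies with $v_h = e_h$ and $w_h = \pi_h(\varpi^2 e_h)$, giving
\[
\|e_h(\cdot,T)\|_\varpi^2 + \gamma\int_0^T |e_h|_{s,\varpi}^2\,\mbox{d}t
\le \frac{C}{K^2}\int_0^T \|e_h\|_\varpi^2\,\mbox{d}t
+ 2\int_0^T\!\big((\mathcal{L}e_h,w_h)_\Omega + \gamma s(e_h,w_h)\big)\,\mbox{d}t + \|e_h(\cdot,0)\|_\varpi^2,
\]
with $e_h(0)=0$ by the choice $u_h(0)=\pi_h u_0$. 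Using $w_h \in V_h$ together with the consistency of \eqref{eq:stab_Galerkin} (tested against $w_h$) and the equation $\mathcal{L}u = f$, the middle term becomes a Galerkin-orthogonality residual: $(\mathcal{L}e_h,w_h)_\Omega + \gamma s(e_h,w_h) = -(\mathcal{L}(u-\pi_h u),w_h)_\Omega - \gamma s(\pi_h u, w_h)$, and using $L^2$-orthogonality of $\pi_h$ and that $\partial_t\pi_h u = \pi_h\partial_t u$, the time-derivative part of $\mathcal{L}(u-\pi_h u)$ drops against $w_h$, leaving $(u - \pi_h u, \bfbeta\cdot\nabla w_h - y_h)_\Omega$ for any $y_h \in V_h$, plus the $\gamma s(\pi_h u, w_h)$ term. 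Here I would choose $w_h = \pi_h(\varpi^2 e_h)$ and write $\bfbeta\cdot\nabla w_h = \pi_h(\bfbeta\cdot\nabla(\varpi^2 e_h)) + (\bfbeta\cdot\nabla w_h - \pi_h(\bfbeta\cdot\nabla(\varpi^2 e_h)))$ — no wait, more directly: insert $\varpi^{\pm1}$ weights and apply Cauchy--Schwarz, so that $(u-\pi_h u, \bfbeta\cdot\nabla w_h - y_h)_\Omega \le \|h^{-\frac12}\varpi^{-1}(u-\pi_h u)\|_{\varpi}\cdot\|h^{\frac12}\varpi(\bfbeta\cdot\nabla w_h - y_h)\|$, and then bound the second factor via \eqref{eq:stab_weight2} (which controls $\|h^{\frac12}(\bfbeta\cdot\nabla(\varpi^2 e_h) - \pi_h(\cdots))\|_{\varpi^{-1}}$ by $|e_h|_{s,\varpi} + K^{-1}\|e_h\|_\varpi$) together with \eqref{eq:move_varpi} to move the weight onto the interpolation error; the stabilization term $\gamma s(\pi_h u,w_h) = \gamma s_\varpi(\pi_h u, e_h) \le \gamma |\pi_h u|_{s,\varpi}|e_h|_{s,\varpi}$ is then handled by \eqref{eq:stab_est} and a triangle inequality with $|u|_{s,\varpi}=0$ (since $u\in H^{k+1}$ has no gradient jumps for $k\ge1$; for $k=1$, $u\in H^2$ so $\nabla u$ is continuous). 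Collecting, every residual term is bounded by $C\zeta(\gamma)^2 h^{k+\frac12}\|D^{k+1}u\|_\varpi$ times $(\|e_h\|_\varpi + |e_h|_{s,\varpi})$, and after a Young's inequality the $|e_h|_{s,\varpi}$ factor is absorbed into the left-hand side.

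The final step is to close with Gronwall. After absorption we arrive at an inequality of the form $\|e_h(\cdot,T)\|_\varpi^2 + \gamma\int_0^T|e_h|_{s,\varpi}^2 \le C\zeta(\gamma)^2\big(\frac{1}{K^2}\int_0^T\|e_h\|_\varpi^2\,\mbox{d}t + h^{2k+1}\int_0^T\|D^{k+1}u\|_\varpi^2\,\mbox{d}t\big)$, and Gronwall's inequality in $T$ yields $\|e_h(\cdot,T)\|_\varpi^2 \le C_K h^{2k+1}\int_0^T\|D^{k+1}u\|_\varpi^2\,\mbox{d}t$ with $C_K$ growing like $\exp(C\zeta(\gamma)^2 K^{-2}T)$, matching the stated exponential dependence. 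Combining with the interpolation bound via the triangle inequality gives the theorem. The main obstacle — and the only genuinely delicate point — is the bookkeeping of the weights in the residual estimates: one must repeatedly use \eqref{eq:move_varpi} to transfer $\varpi$ between adjacent elements (losing only a factor $2$ thanks to $\sigma = K\sqrt h$ with $K>1$), and one must be careful that the inverse-type factor $h^{-1/2}$ lost when passing from $s(\cdot,\cdot)$ to the $\varpi^{-1}$-weighted face norms (as in the proof of Proposition \ref{prop:weight_stab}, and using \eqref{super2}) is exactly compensated by the $h^{1/2}$ gained in \eqref{eq:stab_weight2}; keeping the powers of $h$, $\gamma$, and $K$ straight through all four residual contributions is where the care is needed, but each individual estimate is routine given the lemmas already established.
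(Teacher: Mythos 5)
Your proposal follows the paper's proof essentially step for step: the same splitting into $u-\pi_h u$ and $e_h=\pi_h u-u_h$, the same application of Proposition~\ref{prop:weight_stab} with $w_h=\pi_h(\varpi^2 e_h)$ and $e_h(0)=0$, the same treatment of the residual via $L^2$-orthogonality, \eqref{eq:stab_weight2}, \eqref{super1}--\eqref{super2}, and a concluding Gronwall argument. The only slip is writing $s(\pi_h u,w_h)=s_\varpi(\pi_h u,e_h)$ as an identity --- since $w_h=\pi_h(\varpi^2 e_h)\ne\varpi^2 e_h$ there is a commutator term $s(\pi_h u,\pi_h(\varpi^2 e_h)-\varpi^2 e_h)$ that must be controlled with \eqref{super2}, exactly as you acknowledge in your closing paragraph, so this is a matter of bookkeeping rather than a gap.
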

First note that we may split the error as $u - u_h = \underbrace{u -
\pi_h u}_{=-\eta} + \underbrace{\pi_h u - u_h}_{=e_h}$ and by \eqref{eq:error_est},
\[
\|(u - \pi_h u)(\cdot,T)\|_{\varpi}  \leq C h^{k+1} \|D^{k+1} u (\cdot,T)\|_\varpi.
\]
By the triangle inequality we only need to prove the bound on
$\|e_h(\cdot,T)\|_{\varpi}$.

Using the stability of Proposition \ref{prop:weight_stab} we see that, since
$e_h(\cdot,0) = 0$,
\begin{alignat}{1} \nonumber
\|e_h(\cdot,T)\|_\varpi^2  + \gamma \int_0^T  |e_h|_{s,\varpi}^2
~\mbox{d}t   & \leq C/K^2  \int_0^T \|e_h\|_\varpi^2 ~\mbox{d}t 
+ 2 \int_0^T( (\mathcal{L} e_h, w_h)_\Omega + \gamma s(e_h,w_h) )~\mbox{d}t.
\end{alignat}
with $w_h = \pi_h (\varpi^2 e_h)$.
Now observe that the following consistency property holds
\[
\int_0^T (\mathcal{L}  (e_h - \eta), v_h)_\Omega -
\gamma s(u_h,
v_h) ~\mbox{d}t= 0,\quad \forall v_h \in V_h
\]
and hence
\[
\int_0^T( (\mathcal{L} e_h, w_h)_\Omega + \gamma s(e_h,w_h)
)~\mbox{d}t = \int_0^T( (\mathcal{L} \eta, w_h)_\Omega + \gamma s(\pi_h u_h,w_h)
)~\mbox{d}t.
\]
This leads to a perturbation equation on the form
\begin{alignat}{1}
\|e_h(\cdot,T)\|_\varpi^2  + \gamma \int_0^T  |e_h|_{s,\varpi}^2
~\mbox{d}t   & \leq C (\gamma + \gamma^{-1}) K^{-2}  \int_0^T \|e_h\|_\varpi^2 ~\mbox{d}t 
+ 2 \int_0^T( (\mathcal{L} \eta, w_h)_\Omega + \gamma s(\pi_h u_h,w_h)
)~\mbox{d}t.\label{eq:pert_bound}
\end{alignat}

Considering the first term of the second integral in the right hand side we have using that
time derivation and the $L^2$-projection commute and the
$L^2$-orthogonality of $\eta$
\begin{alignat*}{1}
(\mathcal{L} \eta, w_h)_\Omega & = -(\eta, \bfbeta \cdot \nabla w_h -
y_h)_\Omega \leq h^{-\frac12} \|\eta\|_\varpi h^{\frac12} \inf_{y_h \in V_h} \|\bfbeta \cdot \nabla w_h -
y_h\|_{\varpi^{-1}} \\
&\leq h^{-1} \gamma^{-1}  C \|\eta\|_\varpi^2 + \frac14 \gamma
|e_h|^2_{s,\varpi} + C\gamma /K^2 \|e_h\|^2_\varpi.
\end{alignat*}
Here we used the inequality $ab \leq 4^{-1} a^2 + b^2$ and that by the triangle inequality followed by the bounds \eqref{super1},
and \eqref{eq:stab_weight2} there holds
\begin{alignat*}{1}
h^{\frac12} \inf_{y_h \in V_h}\|\bfbeta \cdot \nabla w_h -
y_h\|_{\varpi^{-1}} &\leq h^{\frac12} \|\bfbeta \cdot \nabla \pi_h (\varpi^2
e_h) - \bfbeta \cdot \nabla (\varpi^2
e_h)\|_{\varpi^{-1}} +
h^{\frac12} \inf_{y_h \in V_h}\|\bfbeta \cdot \nabla (\varpi^2
e_h) - y_h\|_{\varpi^{-1}} \\
&\leq h^{\frac12} \beta_\infty \|\nabla (\pi_h (\varpi^2
e_h) - \varpi^2
e_h)\|_{\varpi^{-1}} +  (C_{ws} |e_h|^2_{s,\varpi} + C_\beta
K^{-2} \|e_h\|^2_\varpi)^{\frac12} \\
&\leq C 
K^{-1} \|e_h\|_{\varpi}+  C_{ws} |e_h|_{s,\varpi} .
\end{alignat*}
For the last term in the right hand side of \eqref{eq:pert_bound} we have 
\begin{alignat*}{1}
s(\pi_h u_h ,w_h)& =  s(\pi_h u_h,\pi_h (\varpi^2 e_h) - \varpi^2 e_h)
 + s(\pi_h u_h, \varpi^2 e_h) \\
& \leq C
|\pi_h u_h|^2_{s,\varpi}  + \frac14 
|e_h|^2_{s,\varpi}  \\
&+  h^2 \beta_\infty^2 \sum_{F \in \mathcal{F}} \|\varpi^{-1} \jump{\nabla
  (\varpi^2 e_h -\pi_h (\varpi^2 e_h))}\|_F^2 .
\end{alignat*}
Applying the bound \eqref{super2} to the last term in the right
hand side and collecting the estimates it follows that
\[
(\mathcal{L} \eta, w_h)_\Omega + \gamma s(\pi_h u_h,w_h)
\leq C (\gamma
|\pi_h u_h|^2_{s,\varpi} + h^{-1} \gamma^{-1}  \|\eta\|_\varpi^2)
+ \frac12\gamma
|e_h|^2_{s,\varpi}  + \gamma C/K^2 \|e_h\|_\varpi^2.
\]
Applying this bound in \eqref{eq:pert_bound} we have
\begin{alignat}{1} \nonumber
\|e_h(\cdot,T)\|_\varpi^2  + \frac12 \gamma\int_0^T |e_h|^2_{s,\varpi}
~\mbox{d}t   & \leq C(\gamma + \gamma^{-1})/K^2  \int_0^T \|e_h\|_\varpi^2 ~\mbox{d}t \\
&+ C \int_0^T(\gamma
|\pi_h u_h|^2_{s,\varpi} + h^{-1} \gamma^{-1}  \|\eta\|_\varpi^2)
~\mbox{d}t. \label{eq:pert2}
\end{alignat}
Since the solution is assumed regular, $u(\cdot,t) \in H^{\frac32+\epsilon}(\Omega)$,
$\epsilon>0$ we have $|\pi_h u_h|^2_{s,\varpi} = |\eta|^2_{s,\varpi}$.
Applying Lemma \ref{lem:weight_approx} yields
\[
\int_0^T(\gamma
|\eta|^2_{s,\varpi} + h^{-1} \gamma^{-1}   \|\eta\|_\varpi^2
)~\mbox{d}t\leq  C h^{2 k+1} (\gamma + \gamma^{-1}) \int_0^T\|D^{k+1} u\|^2_\varpi ~\mbox{d}t.
\]
The claim now follows by an application of Gronwall's inequality.

Consider the following subsets of $\Omega$, $\Omega_0(t) := \{x \in \Omega: \varpi(x,t) = 1\}$
and $\Omega_p(t) :=  \{x \in \Omega: \varpi(x,t) \leq h^p, p>0\}$. Then
denoting  $d
= \mbox{dist}(\Omega_0,\Omega_p)$ it follows by the construction of
$\varpi$ that 
\[
d \sim K p \sqrt{h} |\log (h)|,
\]
and the following bound holds
\begin{alignat*}{1}
\|(u-u_h)(\cdot,T)\|_{\Omega_0} \leq C  h^{k+\frac12}  (\|D^{k+1} u\|_{L^
\infty(0,T;L^2(\Omega  \setminus \Omega_p))} + h^{p}  \|D^{k+1} u\|_{L^\infty(0,T;L^2(\Omega_p))}).
\end{alignat*}
It follows that $D^{k+1}
u$ can be large in $\Omega_p$ without destroying the solution in
$\Omega_0$.
To apply the argument to $u_0$ that is not (globally) in $H^2(\Omega)$ one can
use the weighted $L^2$-stability in the error analysis above and still
obtain estimates. We present a sketch of this result in a Corollary
\begin{corollary}
Assume that the hypothesis of Proposition \ref{prop:weight_stab} are
satisfied.  Assume that $u \in L^\infty(0,T;L^2(\Omega)) \cap L^
\infty(0,T;H^{k+1}(\Omega  \setminus \Omega_p))$, with $p = k+1$ is the solution of
\eqref{eq:1st_order} and $u_h$ the solution of
\eqref{eq:stab_Galerkin}. Then there holds (omitting for
simplicity the dependence on $\gamma$).
\begin{alignat*}{1}
\|(u-u_h)(\cdot,T)\|_{\Omega_0} \leq C_K h^{k+\frac12} ( \| u\|_{L^
\infty(0,T;H^{k+1}(\Omega  \setminus \Omega_{p}))} + \|u\|_{L^\infty(0,T;L^2(\Omega_{p}))}).
\end{alignat*}
\end{corollary}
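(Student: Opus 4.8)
The plan is to rerun the argument proving Theorem~\ref{weigh_error} and to localise the two places where the global $H^{k+1}$-regularity of $u$ entered, exploiting that on $\Omega_p$ the weight is as small as $\varpi\le h^p$ with $p=k+1$. Observe first that, with $\eta=\pi_h u-u$ (well defined since $u(\cdot,t)\in L^2(\Omega)$) and $e_h=\pi_h u-u_h$, nothing up to the perturbation estimate \eqref{eq:pert2} in the proof of Theorem~\ref{weigh_error} uses regularity of $u$: one still has $e_h(\cdot,0)=0$, the consistency relation $\int_0^T(\mathcal{L}(e_h-\eta),v_h)_\Omega-\gamma s(u_h,v_h)~\mbox{d}t=0$ for all $v_h\in V_h$ (the pairing $(\mathcal{L}\eta,w_h)_\Omega=-(\eta,\bfbeta\cdot\nabla w_h-y_h)_\Omega$ being legitimate because $\eta\perp V_h$ and $w_h=\pi_h(\varpi^2 e_h)$ is admissible), and Proposition~\ref{prop:weight_stab} together with the treatment of $(\mathcal{L}\eta,w_h)_\Omega$ and $\gamma s(\pi_h u,w_h)$ exactly as in the proof of Theorem~\ref{weigh_error} gives
\[
\|e_h(\cdot,T)\|_\varpi^2+\tfrac12\gamma\int_0^T|e_h|_{s,\varpi}^2~\mbox{d}t\le C(\gamma+\gamma^{-1})K^{-2}\int_0^T\|e_h\|_\varpi^2~\mbox{d}t+C\int_0^T\bigl(\gamma|\pi_h u|_{s,\varpi}^2+h^{-1}\gamma^{-1}\|\eta\|_\varpi^2\bigr)~\mbox{d}t .
\]
The only change from the smooth case is that the identity $|\pi_h u|_{s,\varpi}=|\eta|_{s,\varpi}$ is no longer available, since $\jump{\nabla u}$ need neither vanish nor be defined on faces inside $\Omega_p$.

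I would then split $\|\eta\|_\varpi$ and $|\pi_h u|_{s,\varpi}$ over $\Omega\setminus\Omega_p$ and $\Omega_p$ (both sets following the characteristics; by \eqref{eq:move_varpi} and \eqref{eq:varpi_bound} enlarging $\Omega_p$ by one layer of elements changes the relevant maxima of $\varpi$ only by a bounded factor). On $\Omega\setminus\Omega_p$ I use \eqref{eq:error_est} and \eqref{eq:stab_est} of Lemma~\ref{lem:weight_approx} localised to that region --- obtained, as in \cite{BGL09}, by inserting a local $L^2$-stable quasi-interpolant and invoking the weighted $L^2$-stability \eqref{L2_approx1} of $\pi_h$ --- together with $\jump{\nabla u}=0$ on the interior faces of the smooth region, to obtain $\|\eta\|_{\varpi,\Omega\setminus\Omega_p}\le Ch^{k+1}\|D^{k+1}u\|_{\varpi,\Omega\setminus\Omega_p}$ and $|\pi_h u|_{s,\varpi,\Omega\setminus\Omega_p}\le Ch^{k+\frac12}\|D^{k+1}u\|_{\varpi,\Omega\setminus\Omega_p}$. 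On $\Omega_p$ I use $\varpi\le Ch^p$, the contraction $\|\pi_h u\|_{L^2(\Omega)}\le\|u\|_{L^2(\Omega)}$, and the inverse inequality \eqref{eq:stab_invest} applied element by element, to obtain $\|\eta\|_{\varpi,\Omega_p}\le Ch^p\|u\|_{L^2(\Omega)}$ and $|\pi_h u|_{s,\varpi,\Omega_p}\le Ch^{p-\frac12}\|u\|_{L^2(\Omega)}$, and then bound $\|u\|_{L^2(\Omega)}^2\le\|u\|_{H^{k+1}(\Omega\setminus\Omega_p)}^2+\|u\|_{L^2(\Omega_p)}^2$.

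With $p=k+1$ the $\Omega_p$-contributions become $h^{-1}\gamma^{-1}\|\eta\|_{\varpi,\Omega_p}^2+\gamma|\pi_h u|_{s,\varpi,\Omega_p}^2\le C(\gamma+\gamma^{-1})h^{2k+1}\bigl(\|u\|_{H^{k+1}(\Omega\setminus\Omega_p)}^2+\|u\|_{L^2(\Omega_p)}^2\bigr)$, i.e.\ exactly the $h^{2k+1}$ scaling matching the target rate $O(h^{k+\frac12})$, and the $(\Omega\setminus\Omega_p)$-contributions give the same power with $\|D^{k+1}u\|_{\varpi,\Omega\setminus\Omega_p}^2\le\|u\|_{H^{k+1}(\Omega\setminus\Omega_p)}^2$ (since $\varpi\le1$). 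Inserting these in the displayed inequality, applying Gronwall's inequality in $\int_0^T\|e_h\|_\varpi^2~\mbox{d}t$ (whose coefficient is $C(\gamma+\gamma^{-1})K^{-2}$, so that $C_K$ is as in Theorem~\ref{weigh_error}), adding the triangle-inequality term $\|\eta(\cdot,T)\|_\varpi\le Ch^{k+1}\bigl(\|D^{k+1}u(\cdot,T)\|_{\varpi,\Omega\setminus\Omega_p}+\|u(\cdot,T)\|_{L^2(\Omega_p)}\bigr)\le Ch^{k+\frac12}(\cdots)$, and finally restricting to $\Omega_0$, where $\varpi=1$, yields the claim.

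The step I expect to be the main obstacle is the stabilisation term in the rough zone: since $u$ is only $L^2$ there one must abandon the super-approximation identity used in Theorem~\ref{weigh_error} and instead pay an inverse inequality, and the two lost half-powers of $h$ --- one from $|\cdot|_{s,\varpi}\lesssim h^{-\frac12}\|\cdot\|_\varpi$ and one from the $h^{-1}$ in front of $\|\eta\|_\varpi^2$ --- have to be absorbed by the exponentially small weight, which is exactly why $p=k+1$ is imposed in the definition of $\Omega_p$. One must also check that the non-locality of the $L^2$-projection does not spoil the localisation, which is the role of the weighted stability \eqref{L2_approx1}, and keep track of the harmless one-element-layer enlargements of $\Omega_p$ that $\varpi$ tolerates by \eqref{eq:varpi_bound}.
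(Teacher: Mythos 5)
Your proposal is correct and follows essentially the same route as the paper: the perturbation inequality \eqref{eq:pert2} is reused unchanged, the non-locality of $\pi_h$ is handled by passing to a local quasi-interpolant (the paper uses the Cl\'ement interpolant $C_h$) via the weighted $L^2$-stability \eqref{L2_approx1}, and the approximation and stabilization terms are split into the smooth region (local approximation estimates) and the rough region (trace/inverse inequalities absorbed by $\varpi\leq Ch^{p}$ with $p=k+1$, with the one-element-layer enlargements $\Omega_{p,ih}$ accounted for). The only cosmetic difference is that the paper first converts both $h^{-1}\|\eta\|_\varpi^2$ and $|\pi_h u|_{s,\varpi}^2$ entirely into quantities involving $C_h u$ (using $|v_h|_{s,\varpi}\leq Ch^{-\frac12}\|v_h\|_\varpi$ on $\pi_h u - C_h u$) before splitting over faces, whereas you split the regions first; the substance is identical.
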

\begin{proof}
The proof follows that of Theorem \ref{weigh_error} closely. We only
need to substitute the $L^2$-projection for an interpolant with more
local properties before applying approximation. 
Let the domain $\Omega_{p,ih}(t)$ be defined by the union of all the
elements that intersect $\Omega_p(T)$ and an integer $i$ layers of nearest
neighbours.
The norm over $\Omega_{p,ih}(t)$ will be denoted $\|\cdot\|_{\Omega_{p,ih}}$.
Let $C_h$ denote the Cl\'ement interpolant defined using local
projections. It is well known
\cite[Lemma 1.127]{EG04} that if
for a given $\ele \in \mathcal{T}$, $\Delta_{\ele}$ denotes the set of
simplices sharing at least one vertex with $S$ and for a face $F$,
$\Delta_F$ denotes the set of simplices sharing at least one vertex
with $F$, then
\begin{equation}\label{eq:clement_approx}
\|v - C_h v\|_{H^m(\ele)} \leq C h^{l-m} \|v\|_{H^l(\Delta_{\ele})},
\quad\|v - C_h v\|_{H^m(F)} \leq C h^{l-m-\frac12}
\|v\|_{H^2(\Delta_F)}, \, 0 \leq m \leq l \leq k+1.
\end{equation} 
It is then straightforward
to use the approximation properties of $C_h$ in $\Omega  \setminus
\Omega_{p,1h}$ and the local stability of $C_h$ in $\Omega_{p,1h}$ to show the estimates
\begin{multline}\label{eq:clem_approx_L2}
\|(u - C_h u)(\cdot,t)\|_{\varpi} \leq C(  h^{k+1}  \|D^{k+1}
u(\cdot,t)\|_{\Omega  \setminus \Omega_{p}} + h^p
\|u(\cdot,t)\|_{\Omega_{p,2h}}) \\
\leq C h^{k+1} (\|u(\cdot,t)\|_{H^2(\Omega  \setminus \Omega_{p})} + \|u(\cdot,t)\|_{\Omega_{p}})
\end{multline}
and 
\begin{multline}\label{eq:clem_approx_s}
|C_h u(\cdot,t)|_{s,\varpi} \leq C( h^{k+\frac12} \|D^{k+1}
u(\cdot,t)\|_{\Omega  \setminus \Omega_{p}} + h^{-\frac12+p}
\|u(\cdot,t)\|_{\Omega_{p,2h}}) \\
\leq C h^{k+\frac12} (\|u(\cdot,t)\|_{H^{k+1}(\Omega  \setminus \Omega_{p})} + \|u(\cdot,t)\|_{\Omega_{p}}).
\end{multline}
For the second inequality we divide $|C_h u(\cdot,t)|_{s,\varpi}$ into
the sum over faces in $\Omega\setminus \Omega_{p,1h}$  and
$\Omega_{p,1h}$. The two different sets are treated differently. For faces in $\Omega\setminus \Omega_{p,1h}$ we proceeded as usual using that
$u(\cdot,t)\vert_{\Omega\setminus \Omega_p} \in
H^{\frac32+\epsilon}(\Omega\setminus \Omega_p)$ and apply
the local approximation properties on faces of $C_h$ (right inequality
of \eqref{eq:clement_approx}. For
faces in $\Omega_{p,1h}$ we can not use approximation and instead 
apply \eqref{eq:trace} and
\eqref{eq:inverse_ineq}. We also used that $\varpi\vert_{\Omega_{p,1h}} \leq C h^p$ by construction.  Observe that
by the weighted $L^2$-stability \eqref{L2_approx1} we have
\begin{equation}\label{eq:clem_stab}
\|(u - \pi_h u)(\cdot,T)\|_\varpi \leq C \|(u - C_h u)(\cdot,T)\|_\varpi 
\end{equation}
and hence as before we only need to prove the bound for
$\|e_h(\cdot,T)\|_\varpi $.
The inequality \eqref{eq:pert2} still holds. To conclude we observe
that using \eqref{eq:clem_stab}
\begin{equation}\label{eq:L2_clem_part}
\int_0^Th^{-1}  \|\eta\|^2_\varpi ~\mbox{d}t \leq C \int_0^T h^{-1}  \|u - C_h u\|^2_\varpi ~\mbox{d}t .
\end{equation}
By combining the inequality
\[
|v_h|_{s,\varpi} \leq C h^{-\frac12} \|v_h\|_\varpi
\]
(that is immediate by \eqref{eq:trace}, \eqref{eq:inverse_ineq} and
\eqref{eq:move_varpi}) with \eqref{eq:clem_stab} we also have
\begin{equation}\label{eq:s_clem_part}
\int_0^T |\pi_h u_h|^2_{s,\varpi} ~\mbox{d}t  \leq C\int_0^T
(h^{-1} \|u - C_h u_h\|^2_{\varpi} +|C_h  u_h|^2_{s,\varpi})  ~\mbox{d}t.
\end{equation}
We conclude as before after applying \eqref{eq:clem_approx_L2} and
\eqref{eq:clem_approx_s} in \eqref{eq:L2_clem_part} and \eqref{eq:s_clem_part}.
\end{proof}
\subsection{Time discretization and weakly imposed boundary
  conditions}
In practice and in the numerical section below of course we need to
include boundary conditions and time discretizations in the above
arguments. Depending on the time-discretization this can be a
challening exercise, but we will here focus on the $\theta$-scheme and the main steps of its analysis using the ideas
above in the case of the backward Euler scheme ($\theta=1$). Boundary conditions
are imposed weakly using the
standard upwind technique known from discontinuous Galerkin methods.
We consider a polygonal domain $\Omega$ and denote its boundary by
$\Gamma:= \partial \Omega$ with outward pointing normal $n$. We decompose $\Gamma$ into an inflow part
\[
\Gamma_- := \{x \in \Gamma: \bfbeta(x) \cdot n<0\}
\]
and an outflow part $\Gamma_+ := \partial \Omega \setminus
\Gamma_-$. The space $V_h$ will here denote the standard finite element space
of continuous piecewise polynomial functions, without boundary
conditions defined on $\mathcal{T}$. We are now interested in the the solution of
\eqref{eq:1st_order} with the additional inflow boundary condition
\[
u = g \mbox{ on } \Gamma_-
\]
where $g \in L^2(0,T;L^2_{\beta \cdot n}(\Gamma-))$ with
$
L^2_{\bfbeta \cdot n}(\Gamma_-) := \{ v:\Gamma_- \mapsto \mathbb{R}: \||\bfbeta
\cdot n|^{\frac12} v\|_{L^2(\Gamma_-)} < \infty\}.
$
We will assume that the $g$, $\Gamma_-$ and $\Gamma_+$ are such that
the exact solution is smooth enough for our purposes.
The timestep $\delta t:= T/N$ for some $N \in \mathbb{N}^+$ will be
assumed satisfy $\delta t \leq C h$ for some $C>0$,
and the discrete solution $u_h := \{u_h^n\}_{n=0}^N$ collects the
finite element approximations on the discrete time levels $t^n = n
\delta t$. The
so-called $\theta$-scheme takes the form: find $u_h^{n} \in V_h$ such
that for $ n = 1,2,3 \dots N$,
\begin{equation}\label{eq:theta}
(\mathcal{L}_\theta^n u_h, v_h)_\Omega + \left< |\bfbeta \cdot n|
  u^{n_\theta}_h,v_h \right>_{\Gamma_-} + s(u_h^{n_\theta},v_h) = (f^{n_\theta},v_h)_\Omega +
\left<|\bfbeta \cdot n| g^{n_\theta},v_h \right>_{\Gamma_-},
\forall v_h \in V_h,
\end{equation}
where  $u_h^{n_\theta} := \theta u_h^n + (1-\theta) u_h^{n-1}$,
$g^{n_\theta} := g(\cdot, t^n+\theta \delta t)$,
$f^{n_\theta}:=f(\cdot, t^n+\theta \delta t)$,
\[
\mathcal{L}_\theta^n u_h := \delta t^{-1} (u_h^n - u_h^{n-1}) + \bfbeta \cdot
\nabla u_h^{n_\theta}, \quad \theta \in [1/2,1]
\]
and $u_h^0 = \pi_h u_0$.
Compared to the time continuous analysis we have two additional points
to study
\begin{enumerate}
\item the time discrete character of the equation
\item the boundary penalty term.
\end{enumerate}
We recall that the theta scheme includes the well-known backward Euler
scheme ($\theta =1$) and the Crank-Nicolson scheme ($\theta=1/2$). A
complete
analysis of the $\theta$ scheme is beyond the scope of the present
paper. To give some insight in the validity of the above arguments in
the fully discrete case we will show the modifications necessary to prove Proposition
\ref{prop:weight_stab} in the time discrete case with weakly imposed
boundary conditions, for $\theta=1$. The Theorem \ref{weigh_error} then follows using
the arguments above and standard truncation error analysis. We will
then show numerically that also the Crank-Nicolson scheme enjoys the
local accuracy property. For further evidence of the local accuracy
property we refer to \cite[Section 5.2 and Fig. 1]{BEF10} for examples
using explicit Runge-Kutta methods and \cite[Section 6]{BG20} for
examples using explicit extrapolated multistep methods.
For the analysis we need the following Lemma the proof of which is
given in the Appendix.
\begin{lemma}\label{lem:technical_tim}
Let $\varpi_n(x) = \varpi(x,t_n)$, where $\varpi$ is a weightfunction
satisfying \eqref{eq:varpi_bound} and $v_h \in V_h$, then for $\delta t$
small enough there holds
\[
 \|v_h\int_{t_{n-1}}^{t_n} \partial_t
  \varpi ~\mbox{d}t\|_\Omega + \|v_h\left|\int_{t_{n-1}}^{t_n} \int_{t}^{t_n}\partial_t^2
  \varpi^2 ~\mbox{d}s\,\mbox{d}t\right|^{\frac12}\|_\Omega\leq C K^{-1}\delta t^{\frac12} \|v_h\|_{\varpi_{n}}.
\]
\end{lemma}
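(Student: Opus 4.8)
The plan is to reduce the estimate to a \emph{pointwise} (in $x$) bound on the two weight-dependent factors appearing on the left-hand side, after which $v_h$ simply factors out of the $L^2(\Omega)$-norm. The argument uses only \eqref{eq:varpi_bound}, the CFL condition $\delta t \le Ch$ that is in force in this subsection, and the standing smallness of $h^{1/2}/K$; in particular it is insensitive to the discrete nature of $v_h$.

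\emph{Step 1: a pointwise weight comparison on the time slab.} Fix $x$ and set $g(t) = \log \varpi(x,t)$. By \eqref{eq:varpi_bound} with $l=1$ we have $|g'(t)| = |\partial_t \varpi|/\varpi \le C\sigma^{-1}$, hence for $t \in [t_{n-1},t_n]$
\[
\varpi(x,t) = \varpi_n(x)\, e^{g(t)-g(t_n)} \le \varpi_n(x)\, e^{C\sigma^{-1}\delta t}.
\]
Since $\sigma^{-1}\delta t = \delta t/(K\sqrt h) \le C\sqrt h/K$ by the CFL condition, smallness of $h^{1/2}/K$ gives $\varpi(x,t) \le 2\,\varpi_n(x)$ for all $x$ and all $t \in [t_{n-1},t_n]$ (this is the elementwise inequality \eqref{eq:move_varpi} read on the half-slab $I_\delta = [t_{n-1},t_n]$, here needed pointwise). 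I also record the elementary consequence of the CFL condition $\sigma^{-1}\delta t = (\delta t^{1/2}/K)(\delta t^{1/2}/h^{1/2}) \le C\,\delta t^{1/2}/K$.

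\emph{Step 2: the two terms.} For the first term, $\big|\int_{t_{n-1}}^{t_n}\partial_t\varpi\,\mathrm{d}t\big| \le C\sigma^{-1}\int_{t_{n-1}}^{t_n}\varpi(\cdot,t)\,\mathrm{d}t \le 2C\sigma^{-1}\delta t\,\varpi_n$ by \eqref{eq:varpi_bound} and Step 1; multiplying by $v_h$, taking the $L^2(\Omega)$-norm, and using $\sigma^{-1}\delta t \le C\delta t^{1/2}/K$ yields the bound $C K^{-1}\delta t^{1/2}\|v_h\|_{\varpi_n}$. For the second term, expanding $\partial_t^2\varpi^2 = 2(\partial_t\varpi)^2 + 2\varpi\,\partial_t^2\varpi$ and applying \eqref{eq:varpi_bound} with $l=1,2$ gives $|\partial_t^2\varpi^2| \le C\sigma^{-2}\varpi^2$, so by Step 1 and $\int_{t_{n-1}}^{t_n}(t_n-t)\,\mathrm{d}t = \delta t^2/2$,
\[
\left|\int_{t_{n-1}}^{t_n}\!\int_t^{t_n}\partial_t^2\varpi^2\,\mathrm{d}s\,\mathrm{d}t\right| \le C\sigma^{-2}\int_{t_{n-1}}^{t_n}\!\int_t^{t_n}\varpi(\cdot,s)^2\,\mathrm{d}s\,\mathrm{d}t \le 4C\sigma^{-2}\varpi_n^2 \cdot \tfrac{\delta t^2}{2};
\]
taking square roots, the left-hand factor is bounded pointwise by $C\sigma^{-1}\delta t\,\varpi_n$, and multiplying by $v_h$, taking the $L^2(\Omega)$-norm and again using $\sigma^{-1}\delta t \le C\delta t^{1/2}/K$ gives $C K^{-1}\delta t^{1/2}\|v_h\|_{\varpi_n}$. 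Adding the two contributions proves the Lemma.

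There is essentially no hard step: the only point requiring care is the weight comparison of Step 1, which is exactly where the standing smallness hypothesis on $h^{1/2}/K$ (together with $\delta t \le Ch$) is used; everything else is bookkeeping with \eqref{eq:varpi_bound} and the elementary integrals $\int_{t_{n-1}}^{t_n}1\,\mathrm{d}t = \delta t$ and $\int_{t_{n-1}}^{t_n}(t_n-t)\,\mathrm{d}t = \delta t^2/2$.
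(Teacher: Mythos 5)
Your proof is correct and follows essentially the same route as the paper: bound the time integrals of $\partial_t\varpi$ and $\partial_t^2\varpi^2$ by $\sigma^{-1}\delta t$, respectively $\sigma^{-2}\delta t^2$, times the weight via \eqref{eq:varpi_bound}, compare the weight across the time slab to $\varpi_n$, and convert $\sigma^{-1}\delta t=\delta t/(K\sqrt h)$ into $CK^{-1}\delta t^{1/2}$ using $\delta t\le Ch$. The only (harmless) deviation is that you establish the weight comparison pointwise in $x$ via the logarithmic derivative $|\partial_t\log\varpi|\le C\sigma^{-1}$ instead of invoking the elementwise inequality \eqref{eq:move_varpi} as the paper does; since the integrals here are purely in time at fixed $x$, this is sufficient and in fact slightly more direct.
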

The following weighted $L^2$-stability estimate is the key ingredient
of the analysis of the fully discrete scheme.
\begin{proposition}
Consider the scheme \eqref{eq:theta} with $\theta=1$, then assuming
$\delta t<1$ small enough there holds, with $w_h^n =
  \pi_h \varpi^2 v_h^n$,
\begin{align*}
\|v_h^{N}\|_{\varpi_{N}}^2 &+ \sum_{n=1}^N \|v_h^n -
v_h^{n-1}\|_{\varpi_{n}}^2+\delta t \sum_{n=1}^N(\||\bfbeta \cdot n|^{\frac12}
  v_h^n \varpi_n\|_{\Gamma}^2 + \gamma |v_h^n|_{s,\varpi_n}^2) \\
& \leq
  C_K (\|v_h^{0}\|_{\varpi_{0}}^2+\delta t \sum_{n=1}^N ( (\mathcal{L}_\theta^n v_h, w_h^n)_\Omega  + \left< |\bfbeta \cdot n|
  v^{n}_h,w^n_h \right>_{\Gamma_-} +
  \gamma s(v_h^{n},w^n_h))).
\end{align*}
The constant $C_K$ grows exponentially in time with exponential coefficient $1/K^2$.
\end{proposition}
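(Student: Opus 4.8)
The plan is to follow the proof of Proposition \ref{prop:weight_stab}, replacing the time-continuous weight by its snapshot $\varpi_n(x):=\varpi(x,t_n)$, testing \eqref{eq:theta} (with $\theta=1$, $u_h=v_h$) with $w_h^n=\pi_h(\varpi_n^2 v_h^n)\in V_h$, multiplying by $\delta t$ and summing over $n=1,\dots,N$. As in that proof I would split $\varpi_n^2 v_h^n = w_h^n - \rho_h^n$ with $\rho_h^n := \varpi_n^2 v_h^n-\pi_h(\varpi_n^2 v_h^n)\perp V_h$, so that $\delta t\sum_n[(\mathcal{L}_\theta^n v_h,w_h^n)_\Omega+\langle|\bfbeta\cdot n|v_h^n,w_h^n\rangle_{\Gamma_-}+\gamma s(v_h^n,w_h^n)]$ decomposes into an ``ideal'' part obtained by testing with $\varpi_n^2 v_h^n$ and a super-approximation remainder involving $\rho_h^n$. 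The three new features relative to the semi-discrete case are the backward difference quotient, the time variation of the weight, and the boundary penalty; Lemmas \ref{lem:weight_approx}, \ref{lem:super} and \ref{lem:technical_tim} apply to $\varpi_n$ since they only use the spatial bounds \eqref{eq:varpi_bound} and the smallness of $h^{1/2}/K$.

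For the ideal part I would use the identity $2(a,a-b)=\|a\|^2-\|b\|^2+\|a-b\|^2$ on the difference quotient, giving $\delta t\sum_n(\delta t^{-1}(v_h^n-v_h^{n-1}),\varpi_n^2 v_h^n)_\Omega=\tfrac12\sum_n(\|v_h^n\|_{\varpi_n}^2-\|v_h^{n-1}\|_{\varpi_n}^2+\|v_h^n-v_h^{n-1}\|_{\varpi_n}^2)$, then shift $\varpi_n\to\varpi_{n-1}$ in the middle term to get the telescoping contribution $\tfrac12(\|v_h^N\|_{\varpi_N}^2-\|v_h^0\|_{\varpi_0}^2)$ plus the remainder $-\tfrac12\sum_n(v_h^{n-1},(\varpi_n^2-\varpi_{n-1}^2)v_h^{n-1})_\Omega$. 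For the convective part, integration by parts with $\nabla\cdot\bfbeta=0$ and $\mathcal{L}\varpi=0$ (so $\bfbeta\cdot\nabla\varpi_n^2=-\partial_t\varpi^2|_{t_n}$) yields $\delta t\sum_n(\bfbeta\cdot\nabla v_h^n,\varpi_n^2 v_h^n)_\Omega=\tfrac{\delta t}{2}\sum_n(v_h^n,\partial_t\varpi^2|_{t_n}v_h^n)_\Omega+\tfrac{\delta t}{2}\sum_n(\langle|\bfbeta\cdot n|v_h^n,\varpi_n^2 v_h^n\rangle_{\Gamma_+}-\langle|\bfbeta\cdot n|v_h^n,\varpi_n^2 v_h^n\rangle_{\Gamma_-})$; adding the penalty $\delta t\sum_n\langle|\bfbeta\cdot n|v_h^n,\varpi_n^2 v_h^n\rangle_{\Gamma_-}$ turns the boundary contribution into the non-negative term $\tfrac{\delta t}{2}\sum_n\||\bfbeta\cdot n|^{1/2}\varpi_n v_h^n\|_\Gamma^2$, which is kept on the left, while the stabilization gives $\gamma\delta t\sum_n|v_h^n|_{s,\varpi_n}^2$ because $\varpi_n\in C^1$. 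The leftover from the ideal part is then the pair of weight-time-variation terms $\tfrac{\delta t}{2}\sum_n(v_h^n,\partial_t\varpi^2|_{t_n}v_h^n)_\Omega$ and $-\tfrac12\sum_n(v_h^{n-1},(\varpi_n^2-\varpi_{n-1}^2)v_h^{n-1})_\Omega$.

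The main obstacle is showing these two terms are harmless. Using the Taylor expansion $\varpi_n^2-\varpi_{n-1}^2=\delta t\,\partial_t\varpi^2|_{t_n}-R_n$ with $R_n=\int_{t_{n-1}}^{t_n}\int_t^{t_n}\partial_t^2\varpi^2\,\mbox{d}s\,\mbox{d}t$, they combine into $\tfrac{\delta t}{2}\sum_n[(v_h^n,\partial_t\varpi^2|_{t_n}v_h^n)_\Omega-(v_h^{n-1},\partial_t\varpi^2|_{t_n}v_h^{n-1})_\Omega]+\tfrac12\sum_n(v_h^{n-1},R_n v_h^{n-1})_\Omega$. The $R_n$ part is bounded directly by Lemma \ref{lem:technical_tim} and \eqref{eq:move_varpi} by $CK^{-2}\delta t\sum_n\|v_h^{n-1}\|_{\varpi_{n-1}}^2$, a Gronwall term with coefficient $K^{-2}$. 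The remaining sum I would treat by summation by parts in $n$: the endpoint contributions $\tfrac{\delta t}{2}(v_h^N,\partial_t\varpi^2|_{t_N}v_h^N)_\Omega$ and $\tfrac{\delta t}{2}(v_h^0,\partial_t\varpi^2|_{t_1}v_h^0)_\Omega$ are each bounded, using $|\partial_t\varpi^2|\leq C\sigma^{-1}\varpi^2$, $\sigma=K\sqrt h$ and $\delta t\leq Ch$, by $Ch^{1/2}K^{-1}(\|v_h^N\|_{\varpi_N}^2+\|v_h^0\|_{\varpi_0}^2)$, hence absorbed into the left-hand side and into $\|v_h^0\|_{\varpi_0}^2$ (smallness of $h^{1/2}/K$), while the interior terms $\tfrac{\delta t}{2}\sum_n(v_h^n,(\partial_t\varpi^2|_{t_n}-\partial_t\varpi^2|_{t_{n+1}})v_h^n)_\Omega$ are controlled via $|\partial_t^2\varpi^2|\leq C\sigma^{-2}\varpi^2$ and \eqref{eq:move_varpi} by $C\delta t^2\sigma^{-2}\sum_n\|v_h^n\|_{\varpi_n}^2\leq CK^{-2}\delta t\sum_n\|v_h^n\|_{\varpi_n}^2$, again a Gronwall term with coefficient $K^{-2}$. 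The crucial point—and the reason this is the delicate step—is that, unlike the semi-discrete case where the $\partial_t\varpi^2$ contributions cancel exactly, here they survive and must be shown to be of size $O(h^{1/2}/K)$ or $O(K^{-2})$, which rests entirely on the hyperbolic scaling $\delta t\sim h$ together with $\sigma=K\sqrt h$.

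Finally, the remainder $\rho_h^n$ is handled exactly as in Proposition \ref{prop:weight_stab} with $\varpi$ replaced by $\varpi_n$: the difference-quotient part of $(\mathcal{L}_\theta^n v_h,\rho_h^n)_\Omega$ vanishes by $L^2$-orthogonality, so only $(\bfbeta\cdot\nabla v_h^n,\rho_h^n)_\Omega=\inf_{y_h\in V_h}(\bfbeta\cdot\nabla v_h^n-y_h,\rho_h^n)_\Omega$ remains, bounded by \eqref{eq:stab_weight} and \eqref{super1} by $C|v_h^n|_{s,\varpi_n}K^{-1}\|v_h^n\|_{\varpi_n}$; likewise $\gamma s(v_h^n,\rho_h^n)$ is bounded by $C\gamma K^{-1}|v_h^n|_{s,\varpi_n}\|v_h^n\|_{\varpi_n}$ using \eqref{super2}. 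Splitting these with the arithmetic–geometric inequality, the $|v_h^n|_{s,\varpi_n}^2$ parts absorb into $\gamma\delta t\sum_n|v_h^n|_{s,\varpi_n}^2$ and the rest gives a Gronwall term with coefficient $(\gamma+\gamma^{-1})K^{-2}$. The only genuinely new term is $\delta t\sum_n\langle|\bfbeta\cdot n|v_h^n,\rho_h^n\rangle_{\Gamma_-}$ from testing the penalty with $\rho_h^n$; bounding $\|\varpi_n^{-1}\rho_h^n\|_{\Gamma_-}$ by the scaled trace inequality \eqref{eq:trace}, \eqref{eq:inverse_ineq} and \eqref{super1} gives $\|\varpi_n^{-1}\rho_h^n\|_{\Gamma_-}\leq CK^{-1}\|v_h^n\|_{\varpi_n}$, so this term is controlled by $\tfrac14\delta t\sum_n\||\bfbeta\cdot n|^{1/2}\varpi_n v_h^n\|_\Gamma^2$ plus a $CK^{-2}\delta t\sum_n\|v_h^n\|_{\varpi_n}^2$ Gronwall term. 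Collecting all contributions, absorbing the left-hand quantities that appear with the small factors $h^{1/2}/K$ or $\delta t$, and applying the discrete Gronwall inequality (which uses $\delta t$ small to move the $\|v_h^m\|_{\varpi_m}^2$ Gronwall term to the left) yields the stated estimate with $C_K$ growing like $\exp(C(\gamma+\gamma^{-1})K^{-2}T)$.
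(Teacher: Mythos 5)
Your proof is correct and its architecture coincides with the paper's: test with $w_h^n=\pi_h(\varpi_n^2 v_h^n)$, split off the super-approximation remainder $\varpi_n^2 v_h^n-w_h^n$ (handled exactly as in Proposition \ref{prop:weight_stab}, with the extra inflow term controlled by the trace inequality and \eqref{super1}), and conclude by the discrete Gronwall inequality. The one place where you genuinely deviate is the treatment of the weight-time-variation terms. The paper writes $((v_h^{n-1})^2,\varpi_n^2-\varpi_{n-1}^2)_\Omega=((v_h^{n-1})^2-(v_h^n)^2,\varpi_n^2-\varpi_{n-1}^2)_\Omega+((v_h^n)^2,\varpi_n^2-\varpi_{n-1}^2)_\Omega$, bounds the first piece via $a^2-b^2=(a+b)(a-b)$, Cauchy--Schwarz and Lemma \ref{lem:technical_tim}, absorbing it into a fraction of the dissipation term $\|v_h^n-v_h^{n-1}\|_{\varpi_n}^2$ kept on the left, and combines the second piece with $\delta t\,((v_h^n)^2,\bfbeta\cdot\nabla\varpi_n^2)_\Omega$ through $\mathcal{L}\varpi^2=0$ into the double integral of $\partial_t^2\varpi^2$. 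You instead Taylor-expand $\varpi_n^2-\varpi_{n-1}^2$ about $t_n$ and perform an Abel summation in $n$, producing endpoint terms of size $O(h^{1/2}/K)$ times the weighted norms and interior terms of size $O(\delta t\,K^{-2})$. Both routes rest on the same two ingredients ($\delta t\lesssim h$ together with $\sigma=K\sqrt{h}$, and \eqref{eq:move_varpi}) and both deliver the Gronwall coefficient $K^{-2}$; yours has the minor advantage of leaving the full $\sum_n\|v_h^n-v_h^{n-1}\|_{\varpi_n}^2$ term intact on the left rather than only a fraction of it, at the price of the extra endpoint bookkeeping and of invoking the smallness of $h^{1/2}/K$ once more to absorb the $t_N$ endpoint into $\|v_h^N\|_{\varpi_N}^2$.
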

\begin{proof}
First we observe that using standard partial integration and $\nabla
\cdot \bfbeta = 0$ we have
\[
(\bfbeta \cdot \nabla v_h, \varpi^2 v_h)_\Omega +\left< |\bfbeta \cdot n|
  v_h,\varpi^2 v_h \right>_{\Gamma_-}= - (\bfbeta \cdot \nabla v_h, \varpi^2 v_h)_\Omega- (v_h,(\bfbeta \cdot \nabla \varpi^2)
v_h)_\Omega + \left< |\bfbeta \cdot n| v_h, \varpi^2 v_h\right>_{\Gamma_+}.
\]
As a consequence
\[
(\bfbeta \cdot \nabla v_h, \varpi^2 v_h)_\Omega +\left< |\bfbeta \cdot n|
  v_h,\varpi^2 v_h \right>_{\Gamma_-}= - \frac12 (v_h,(\bfbeta \cdot \nabla \varpi^2)
v_h)_\Omega + \frac12 \left< |\bfbeta \cdot n| v_h, \varpi^2 v_h\right>_{\Gamma}.
\]
We also have 
\[
(v_h^n - v_h^{n-1},\varpi^2_{n} v^n_h)_\Omega =
\frac12 \|v_h^{n}\|_{\varpi_{n}}^2 + \frac12 \|v_h^n -
v_h^{n-1}\|_{\varpi_{n}}^2- \frac12 \|v_h^{n-1}\|_{\varpi_{n}}^2.
\]
It follows that 
\begin{align*}
\delta t \sum_{n=1}^N ( (\mathcal{L}_\theta^n v_h, \varpi^2_{n}
  v^n_h)_\Omega & + \left< |\bfbeta \cdot n|
  v^{n}_h,\varpi^2_{n} v^n_h \right>_{\Gamma_-} +
  \gamma s(v_h^{n},\varpi^2_{n} v^n_h) ) \\
& = \frac12 \|v_h^{N}\|_{\varpi_{N}}^2 +  \frac12 \sum_{n=1}^N (\|v_h^n -
v_h^{n-1}\|_{\varpi_{n}}^2-
  ((v_h^{n-1})^2,\varpi_{n}^2-\varpi_{n-1}^2)_\Omega) - \frac12
  \|v_h^{0}\|_{\varpi_{0}}^2\\
& - \frac12 \delta t \sum_{n=1}^N ((v_h^n)^2,\bfbeta \cdot \nabla \varpi^2_n)_\Omega + \frac12 \delta t \sum_{n=1}^N(\||\bfbeta \cdot n|^{\frac12}
  v_h^n \varpi_n,\|_{\Gamma}^2 + 2 \gamma s(v_h^n,\varpi_n^2 v_h^n)).
\end{align*}
Identifying the terms in the right hand side that do not have a sign
we see that we need to control
\[
 \sum_{n=1}^N
 ((v_h^{n-1})^2,\varpi_{n}^2-\varpi_{n-1}^2)_\Omega+\delta t ((v_h^n)^2,\bfbeta \cdot \nabla \varpi^2_n)_\Omega).
\]
We rewrite the first term 
\[
((v_h^{n-1})^2, (\varpi_{n}^2-\varpi_{n-1}^2) )_\Omega = ((v_h^{n-1})^2 -
(v_h^{n})^2,\varpi_{n}^2-\varpi_{n-1}^2)_\Omega + ((v_h^{n})^2, (\varpi_{n}^2-\varpi_{n-1}^2))_\Omega.
\]
For the first term on the right hand side we develop $a^2-b^2 =
(a+b)(a-b)$ and apply Cauchy-Schwarz inequality and the
arithmetic-geometric inequality, followed by Lemma
\ref{lem:technical_tim} and the inequality \eqref{eq:move_varpi} to
obtain the bound
\begin{align*}
 ((v_h^{n-1})^2 -
(v_h^{n})^2,\varpi_{n}^2-\varpi_{n-1}^2)_\Omega & =
                                                ((v_h^{n-1}+v_h^{n})(v_h^{n-1}-v_h^{n}),\varpi_{n}^2-\varpi_{n-1}^2)_\Omega\\
& =
  ((v_h^{n-1}+v_h^{n})(v_h^{n-1}-v_h^{n}),(\varpi_{n}+\varpi_{n-1})\int_{t_{n-1}}^{t_n} \partial_t
  \varpi(\cdot,t) ~\mbox{d}t)_\Omega\\
&\ge - \epsilon^{-1} \|(v_h^n+v_h^{n-1}) \int_{t_{n-1}}^{t_n} \partial_t
  \varpi(\cdot,t) ~\mbox{d}t\|_\Omega^2 - \frac{\epsilon}{2}
  ((v_h^n-v_h^{n-1})^2, \varpi_{n}^2+\varpi_{n-1}^2)_\Omega \\
&\ge - C K^{-2} \epsilon^{-1} \delta t (\|v_h^n\|_{\varpi_n}^2+
  \|v_h^{n-1}\|_{\varpi_{n-1}}^2)-  \frac{C \epsilon}{2} \|v_h^n-v_h^{n-1}\|_{\varpi_{n}}^2.
\end{align*}
Considering the remaining terms, using the relation $\mathcal{L}
\varpi^2 = 0$, and applying once again Lemma
\ref{lem:technical_tim}, yields the bound
\begin{align*}
((v_h^{n-1})^2,\varpi_{n}^2-\varpi_{n-1}^2)_\Omega +  \delta t
  ((v_h^n)^2,\bfbeta \cdot \nabla \varpi^2_n)_\Omega & =
                                                     ((v_h^n)^2,\int_{t_{n-1}}^{t_n} \partial_t
                                                     \varpi^2
                                                     ~\mbox{d}t-
                                                     \delta t\partial_t \varpi^2_n)_\Omega \\
& = ((v_h^n)^2,\int_{t_{n-1}}^{t_n} \int_t^{t_n} \partial_{tt} \varpi^2
  ~\mbox{d}s ~\mbox{d}t)_\Omega\\
& \ge  - \delta t C/K^2 \|v_h^n\|_{\varpi_n}^2.
\end{align*}
Taking $\epsilon$  sufficiently small so that $C
\epsilon/2\leq 1/4$ it
follows that
\begin{align*}
\|v_h^{N}\|_{\varpi_{N}}^2 &+ \sum_{n=1}^N (\|v_h^n -
v_h^{n-1}\|_{\varpi_{n}}^2+\delta t \sum_{n=1}^N(\||\bfbeta \cdot n|^{\frac12}
  v_h^n \varpi_n\|_{\Gamma}^2 + \gamma |v_h^n|_{s,\varpi_n}^2) \\
& \leq C (
  \|v_h^{0}\|_{\varpi_{0}}^2+\delta t \sum_{n=1}^N ( (\mathcal{L}_\theta^n v_h, \varpi^2_{n}
  v^n_h)_\Omega  + \left< |\bfbeta \cdot n|
  v^{n}_h,\varpi^2_{n} v^n_h \right>_{\Gamma_-} +
  \gamma s(v_h^{n},\varpi^2_{n} v^n_h) +\delta t C K^{-2} \|v_h^n\|_{\varpi_n}^2)).
\end{align*}
Proceeding as before we add and subtract $w_h^n := \pi_h (\varpi^2_{n}
v^n_h)$ in the right slot of the bilinear forms of the right hand side 
\begin{align*}
\|v_h^{N}\|_{\varpi_{N}}^2 &+ \sum_{n=1}^N \|v_h^n -
v_h^{n-1}\|_{\varpi_{n}}^2+\delta t \sum_{n=1}^N(\||\bfbeta \cdot n|^{\frac12}
  v_h^n \varpi_n\|_{\Gamma}^2 + \gamma |v_h^n|_{s,\varpi_n}^2) \\
& \leq
  C(\|v_h^{0}\|_{\varpi_{0}}^2 +\delta t \sum_{n=1}^N ( (\mathcal{L}_\theta^n v_h, w_h)_\Omega  + \left< |\bfbeta \cdot n|
  v^{n}_h,w_h \right>_{\Gamma_-} +
  s(v_h^{n},w_h) +\delta t C \|v_h^n\|_{\varpi_n}^2)\\
& +\delta t \sum_{n=1}^N ( (\mathcal{L}_\theta^n v_h, \varpi^2_{n} 
  v^n_h - w_h)_\Omega  + \left< |\bfbeta \cdot n|
  v^{n}_h,\varpi^2_{n} v^n_h - w_h \right>_{\Gamma_-} +
    \gamma s(v_h^{n},\varpi^2_{n} v^n_h - w_h))).
\end{align*}
Only the term introduced for the weak imposition of boundary
conditions differs from the time-continuous analysis. For this term we
observe that
\[
 \left< |\bfbeta \cdot n|
  v^{n}_h,\varpi^2_{n} v^n_h - w_h \right>_{\Gamma_-} \ge -\epsilon \||\bfbeta \cdot n|^{\frac12}
  v_h^n \varpi_n\|_{\Gamma}^2   - \frac{\beta_\infty}{4 \epsilon} \|\varpi_{n}^{-1}
(\varpi^2_{n} v^n_h - \pi_h \varpi^2_{n} v^n_h)\|_{\Gamma_-}^2.
\]
For the second term on the right hand side we have the bound
\[
 \|\varpi_{n}^{-1}
(\varpi^2_{n} v^n_h - \pi_h \varpi^2_{n} v^n_h)\|_{\Gamma_-}^2 \leq C/K^2 \|v^n_h \|_{\varpi}^2.
\]
This follows by applying the trace inequality \eqref{eq:trace}, the
properties of $\varpi$ and the inequality \eqref{super1}.
Proceeding as in the time-continuous case we then obtain the bound
\begin{align*}
\|v_h^{N}\|_{\varpi_{N}}^2 &+ \sum_{n=1}^N \|v_h^n -
v_h^{n-1}\|_{\varpi_{n}}^2+\delta t \sum_{n=1}^N(\||\bfbeta \cdot n|^{\frac12}
  v_h^n \varpi_n\|_{\Gamma}^2 + \gamma |v_h^n|_{s,\varpi_n}^2) \\
& \leq C(
  \|v_h^{0}\|_{\varpi_{0}}^2+ \delta t \sum_{n=1}^N ( (\mathcal{L}_\theta^n v_h, w_h)_\Omega  + \left< |\bfbeta \cdot n|
  v^{n}_h,w_h \right>_{\Gamma_-} +
  \gamma s(v_h^{n},w_h) +\delta t  K^{-2} \|v_h^n\|_{\varpi_n}^2)).
\end{align*}
Choosing $\delta t$ sufficiently small the term $\delta t C
\|v_h^N\|_{\varpi_N}^2$ in the right hand side can be absorbed in the
left hand side and we conclude by an application of the discrete
Gronwall's inequality.
\end{proof}
\begin{remark}
A consequence of the previous analysis is that the proposed
method can be used in the context of problems, where the boundary or
initial data
is unknown or partially known. Assume for example that
$g$ is unknown and replaced by zero. Then, since the effect of the
erroneous boundary condition is damped exponentially for
non-characteristic directions, the solution can still be approximated
with good accuracy in subsets $\Omega_0$ whose domain of dependence is
sufficiently far from the boundary. Similarly if the initial data is
unknown in some parts of the domain, the solution will still remain
accurate in subdomains where the initial data in the domain of
dependence is known. This result is a time-dependent
analogue to the analysis of \cite{BNO20}.
\end{remark}

\section{Numerical examples}\label{sec:numerics}
All numerical examples were produced using the package FreeFEM++
\cite{He12}.
The method \eqref{eq:theta} is considered with $\theta=1/2$,
corresponding to the second order Crank-Nicolson scheme. This choice
was made to minimize the perturbation of the global energy estimate by
the time-discretization. The
consistent mass matrix is used and exact quadrature is applied to all
the forms.
We first consider transport in the disc $\Omega:= \{(x,y) \in \mathbb{R}^2 :
x^2+y^2  < 1\}$ under the velocity field $\bfbeta =
(y,-x)$. Approximations are computed on a series of unstructured
meshes. We set
$f=0$ and consider two different functions $u_0$ as initial data.
One is smooth
\begin{equation}\label{eq:u0_smooth}
u_0 = e^{-30((x-0.5)^2+y^2)}
\end{equation}
and one is rough 
\[
\tilde u_0 = \left\{\begin{array}{l}
1 \quad \sqrt{(x+0.5)^2 +y^2} <0.2\\
0 \quad \mbox{otherwise}
\end{array} \right. .
\]
The velocity field simply turns the disc with the initial data and 
one full turn is computed so that the final solution should be equal to the
inital data. Two numerical experiments are considered where the
solution is approximated for
the initial data $u_0$ and $u_0 + \tilde u_0$.
\begin{figure}[t]
\centering
\hspace{-0.5cm}
\includegraphics[width=0.3\linewidth, angle=90]{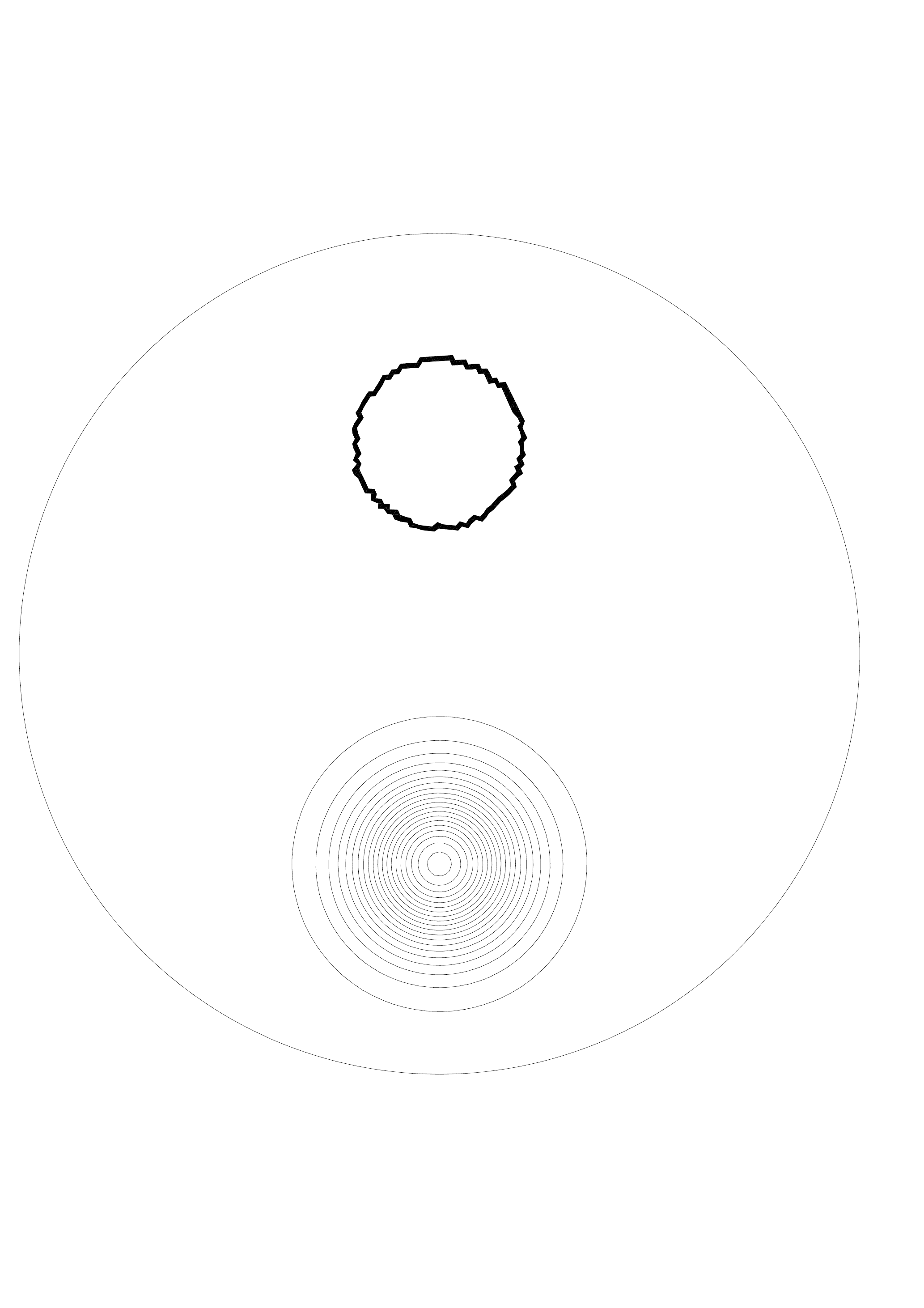} \hspace{-2cm}
\includegraphics[width=0.3\linewidth, angle=90]{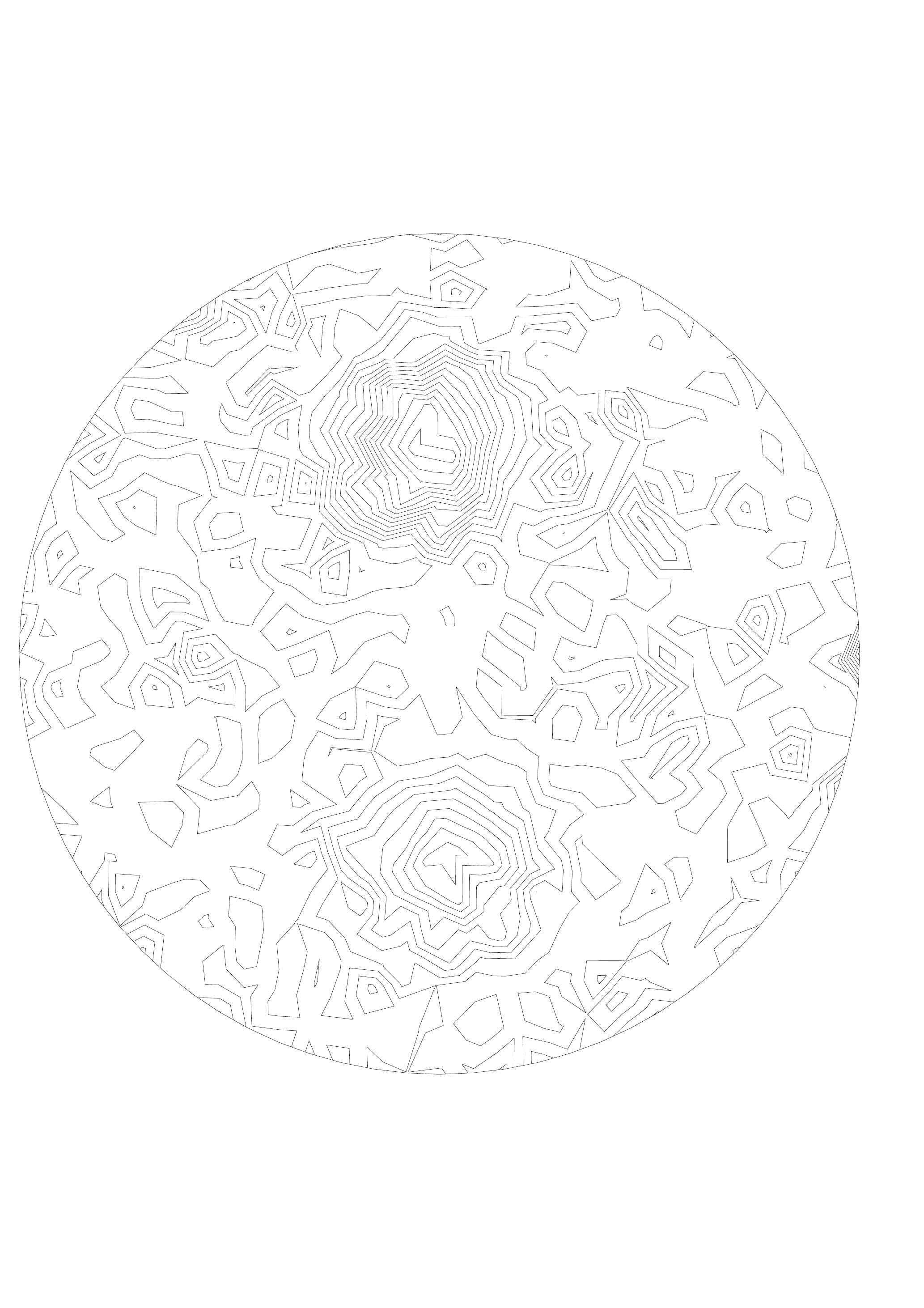}\hspace{-2cm}
\includegraphics[width=0.3\linewidth, angle=90]{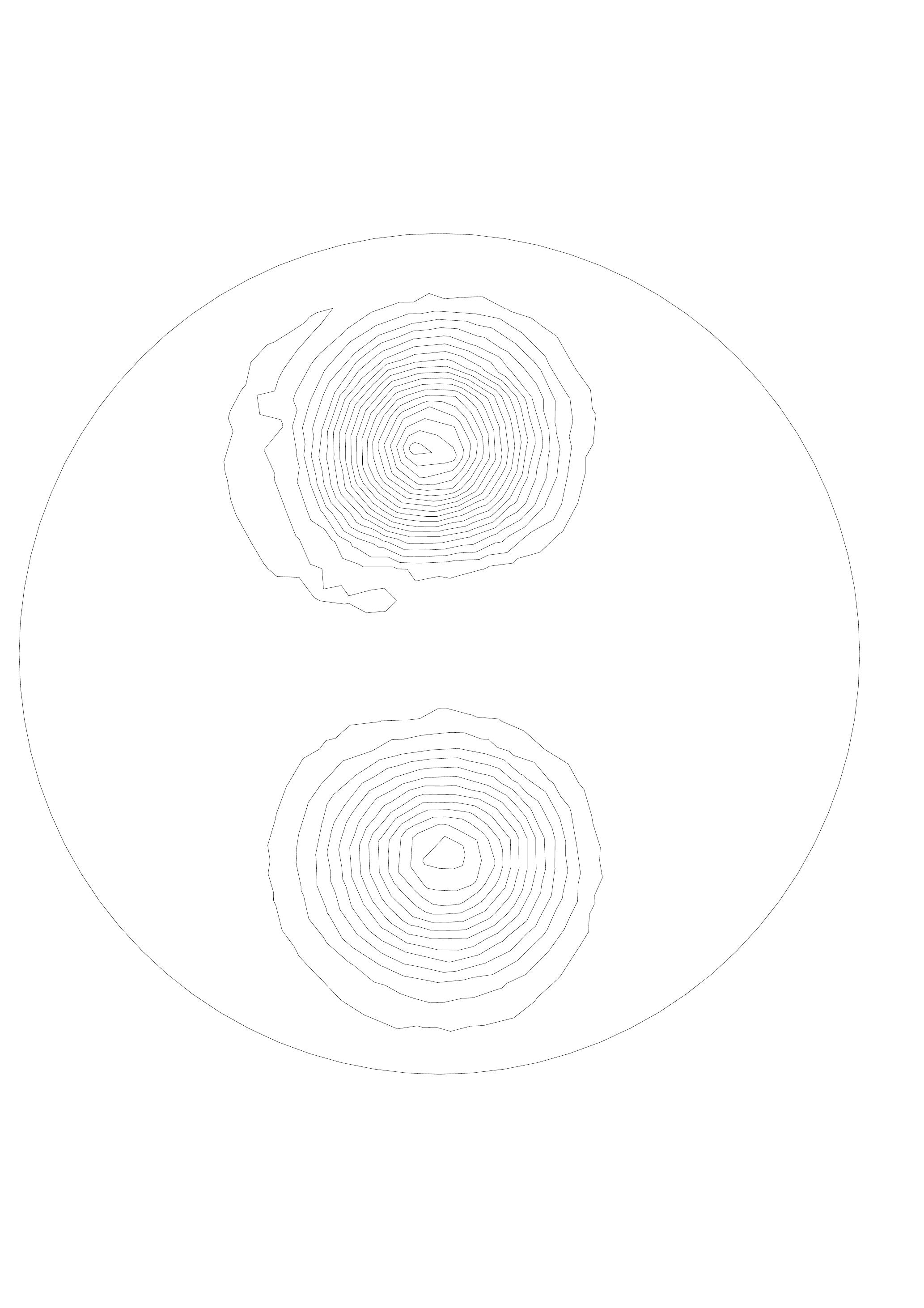}
\caption{From left to right: rough initial data on fine mesh $u_0 + \tilde u_0$, unstabilized
  solution, stabilized solution ($nele=80$, one turn).}
\label{fig:data1}
\end{figure}
We report the global error in the material derivative over the space
time domain, the global
$L^2$-norm of the error at the final time, and in the case where both the rough and the
smooth initial data are combined, the error obtained in the
smooth part, i.e. the $L^2$-norm over $\{(x,y) \in \Omega: x>0\}$. The discretization parameters
for piecewise affine ($P_1$ below) approximation have been chosen as $dt = \tfrac12 h =
\pi/nele$, where $nele$ is the number of cell faces on the disc
perimeter. For piecewise quadratic ($P_2$ below) approximation $h = 2
\pi/nele$ and $dt = \tfrac12 h^{\frac32}$, to make the error of the time
and space discretization similar.  In the left panel of figure
\ref{fig:data1} the smooth and rough initial data, interpolated on a very fine mesh, are presented. In the
middle panel the solution after one turn without stabilization and in
the right panel the solution after one turn with stabilization for $P_1$,
on the mesh resolution $nele =80$ are reported. We see
that the sharp layers are smeared on this coarse mesh when
the stabilized method is used, but contrary to the unstabilized case
the smooth part of the solution is accurately captured. 

 In figure \ref{fig:conv_smooth1} the convergence of stabilized and unstabilized methods with
$P_1$ and $P_2$ elements are compared for the smooth initial data. We
observe that when the solution is globally smooth both methods perform
well in the $L^2$-norm. Nevertheless, the improvement of the convergence rate for the
stabilized method is clearly visible for both approximation spaces,
both in the $L^2$-error and in the material derivative. The results when part of the solution is
rough (initial data from figure \ref{fig:data1}, left plot) are
reported in figure \ref{fig:conv_smooth2}. Note that both methods have similar global error
in the $L^2$-norm. The stabilized method on
the other hand still has optimal convergence in the part where the
solution is smooth, in accordance with the theory of section
\ref{sec:weight}. Its material derivative is also more stable under refinement.
The unstabilized method has equally poor convergence in the smooth and
in the rough part of the solution.
\begin{figure}[t]
\centering
\includegraphics[width=0.45\linewidth]{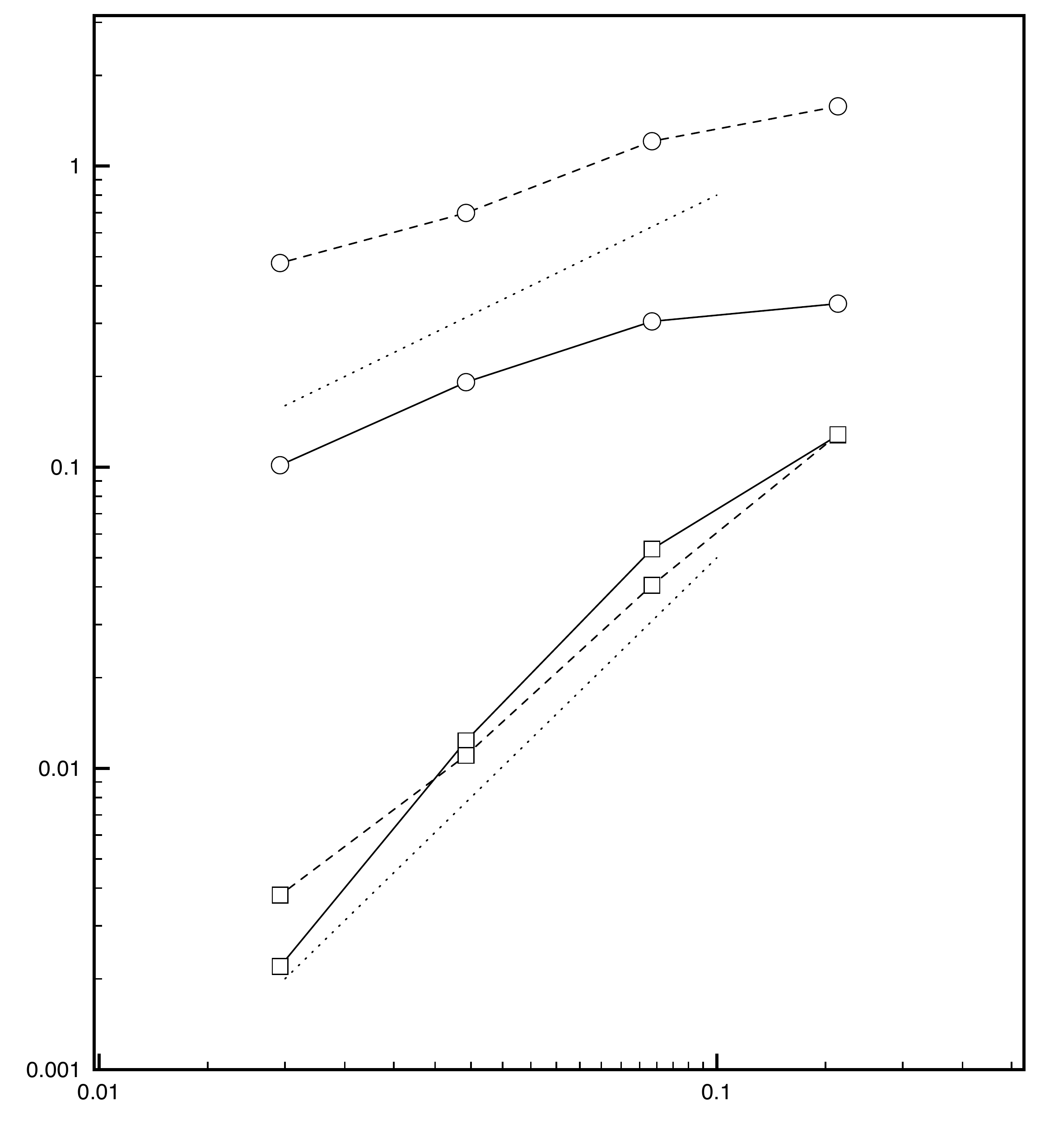}
\includegraphics[width=0.45\linewidth]{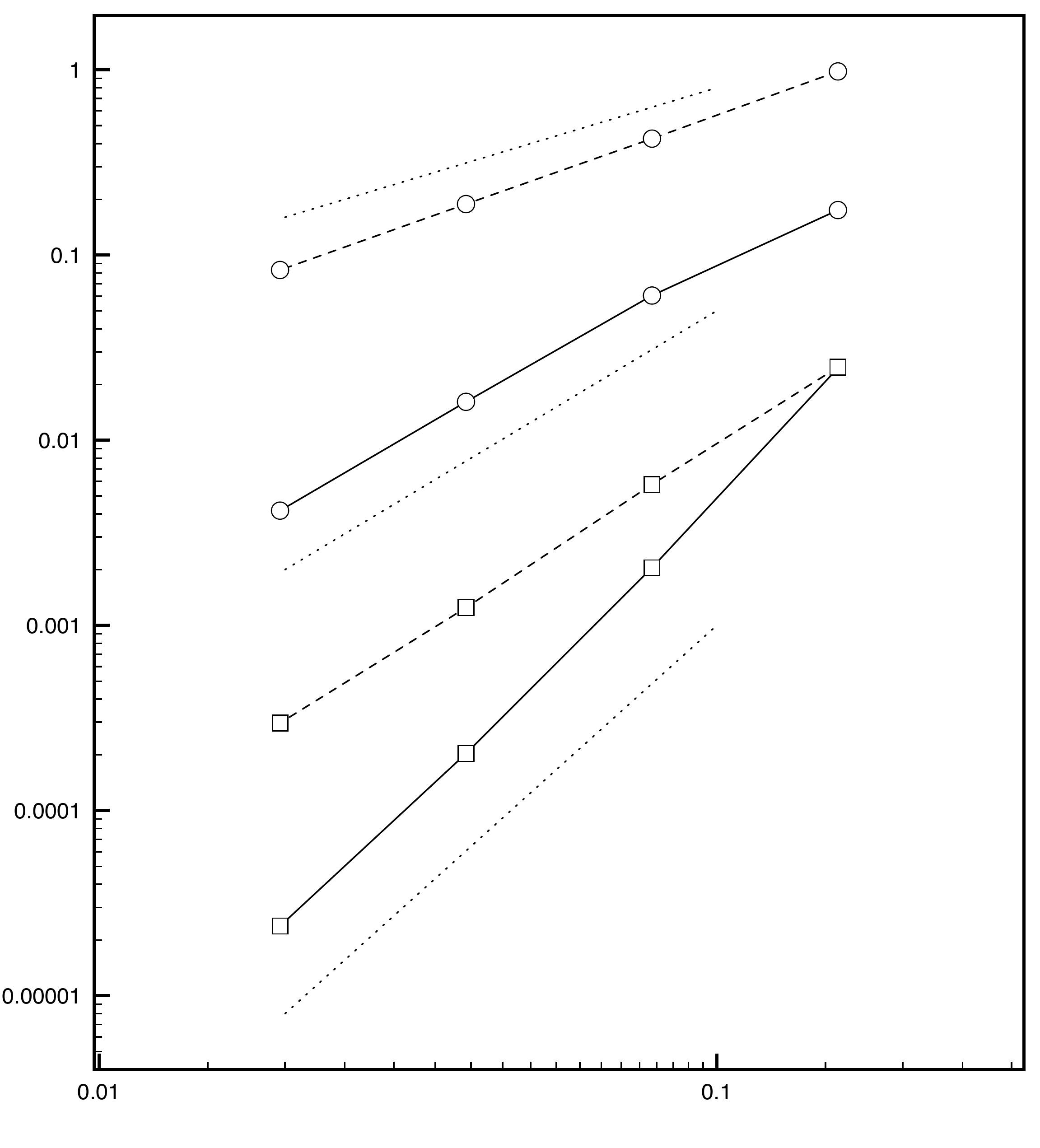}
\caption{Comparison of errors plotted against mesh size $h$ for stabilized (full line) and unstabilized (dashed
  line) methods with $P_1$ (left) and $P_2$ (right) approximation.
  Globally smooth initial data (equation \eqref{eq:u0_smooth}). The
  space time 
  error in
  material derivative has circle markers. The final time
 global $L^2$-error has square markers. The dotted reference lines
 have slope $1,2$ from top to bottom in the left graphic and $1,2,3$ from top to bottom in the right graphic.}
\label{fig:conv_smooth1}
\end{figure}
\begin{figure}[t]
\centering
\includegraphics[width=0.45\linewidth]{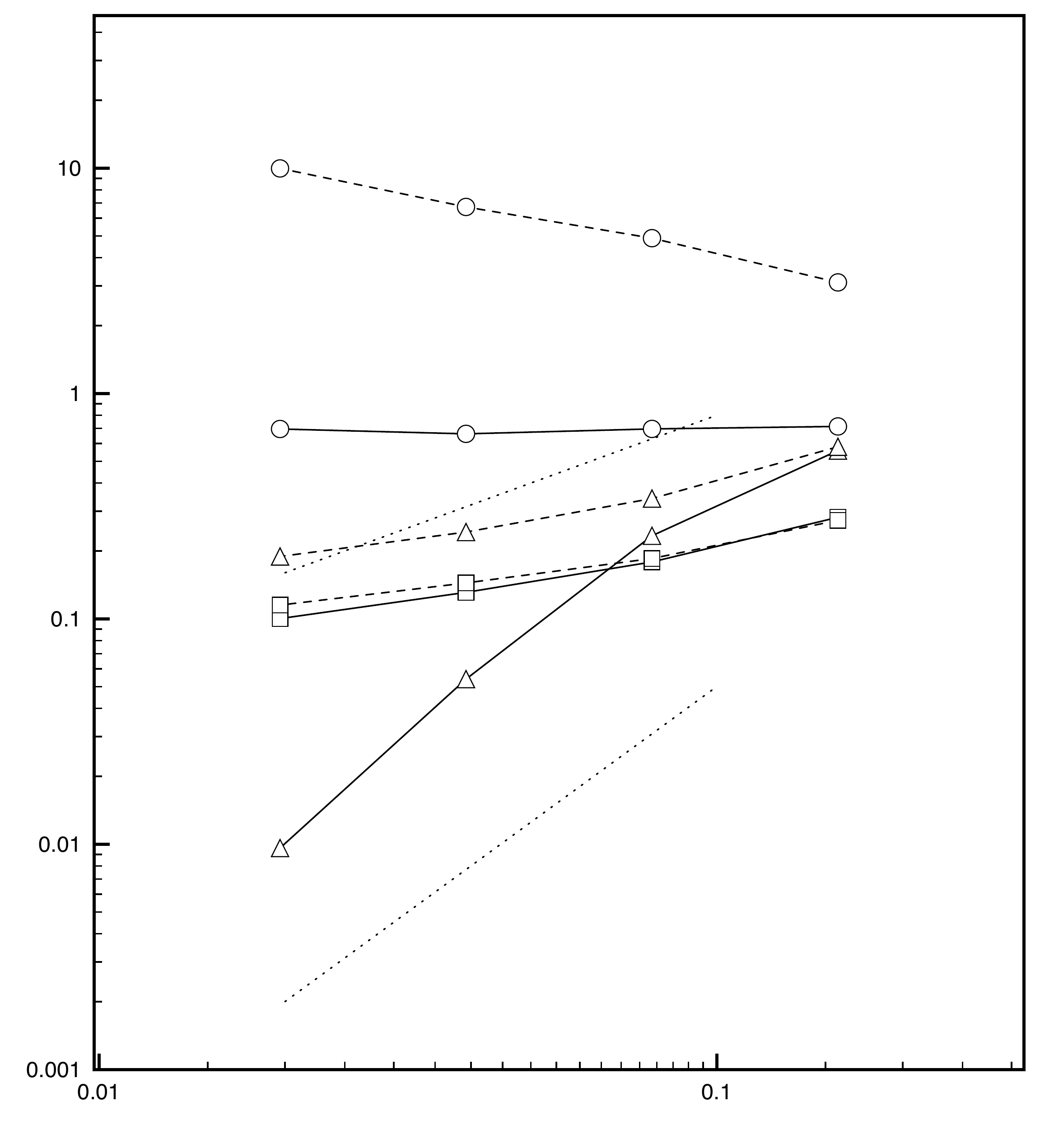}
\includegraphics[width=0.45\linewidth]{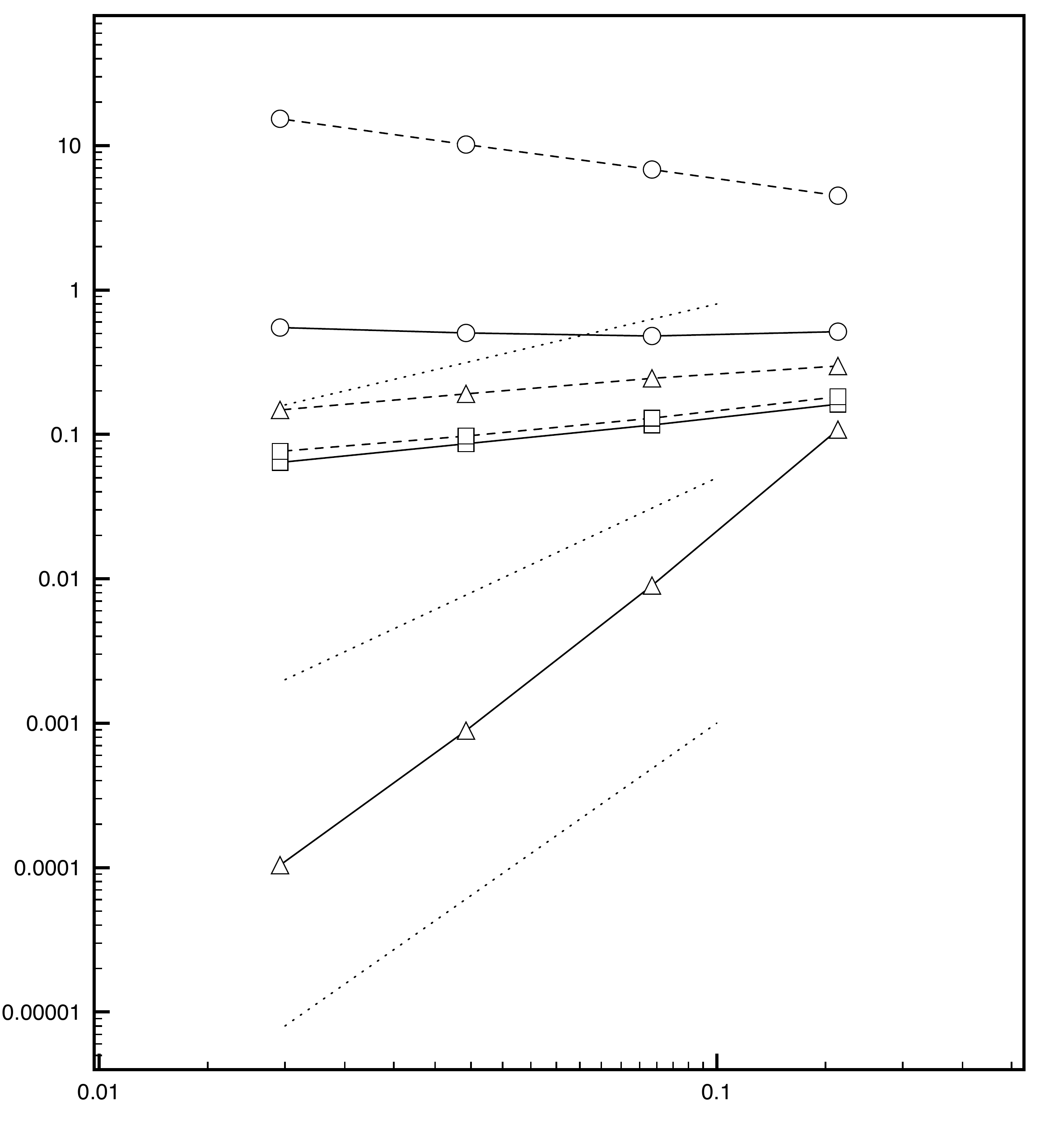}
\caption{Comparison of errors plotted against mesh size $h$ for stabilized (full line) and unstabilized (dashed
  line) methods with $P_1$ (left) and $P_2$ (right). Initial data
  from figure \ref{fig:data1} (left plot). The space time 
  error in
  material derivative has circle markers. The final time 
 global $L^2$-error has square markers and the final time local $L^2$-error has triangle markers. The dotted reference lines
 have slope $1,2$ from top to bottom in the left graphic and $1,2,3$ from top to bottom in the right graphic.}
\label{fig:conv_smooth2}
\end{figure}
\subsection{An example with inflow and outflow and weakly imposed
  boundary conditions}
Here we consider transport in the unit square with $\bfbeta =
(1,0)^T$. We use a structured mesh with $nele$ cell faces on the side of
the square. The initial data consists of a cylinder of radius $r=0.2$
centered in the middle of the square and a Gaussian centered on the
left boundary (See figure \ref{fig:data2}, left plot). The exact
shapes are the same as those of the previous example. The solution is
approximated over the interval $(0,1]$ so
that the cylinder leaves the domain at $t=0.7$ and at $t=1$ the
Gaussian is centered on the right boundary. The time dependent inflow boundary
condition $u =g$ on $\Gamma_-$ is imposed weakly as described in
\eqref{eq:theta} ($g$ is chosen as the trace of the known exact solution). In figure
\ref{fig:data2}, the final time approximation is reported in the middle
plot without
stabilization and the in right plot with stabilization. Observe that from
$t=0.7$ the solution is smooth. 
Nevertheless the unstabilized Galerkin
method fails to produce an accurate approximation of the smooth final
time solution. Spurious oscillations from the discontinuity have spread over the whole
computational domain and remain also when the rough part of the
solution has left. The convergence of the $L^2$-error at final times for
the stabilized and unstabilized  approaches is shown in figure \ref{fig:conv_tube}
($h=1/nele$, $nele = 40,80,160,320$). We see
that for the stabilized method both the $P_1$ and $P_2$ approximations have
optimal convergence to the smooth solution. The unstabilized method
converges approximately as $O(h^{\frac12})$ in both cases and its
material derivative diverges.
\begin{figure}[t]
\centering
\hspace{-0.5cm}
\includegraphics[width=0.3\linewidth, angle=90]{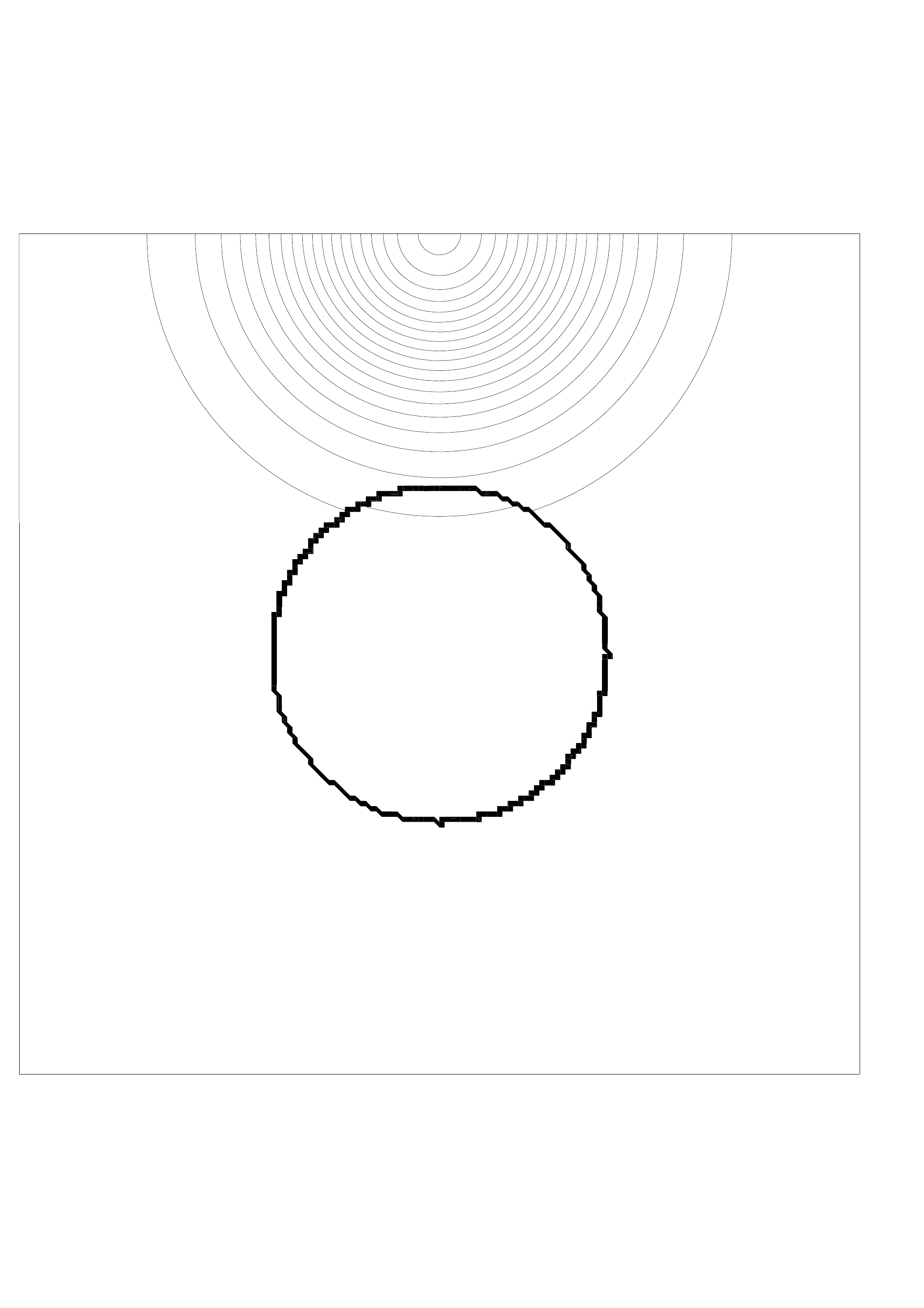} \hspace{-2cm}
\includegraphics[width=0.3\linewidth, angle=90]{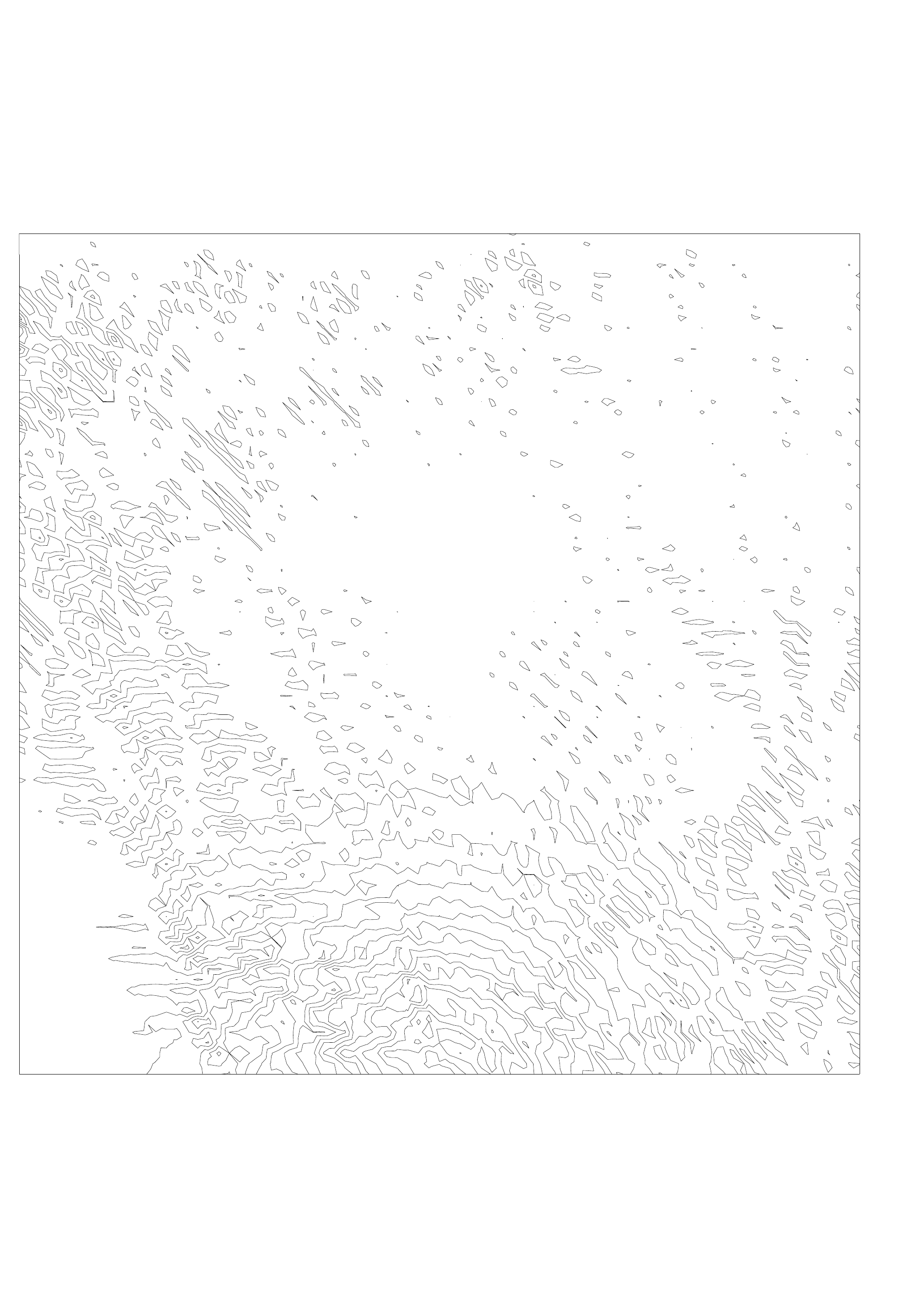}\hspace{-2cm}
\includegraphics[width=0.3\linewidth, angle=90]{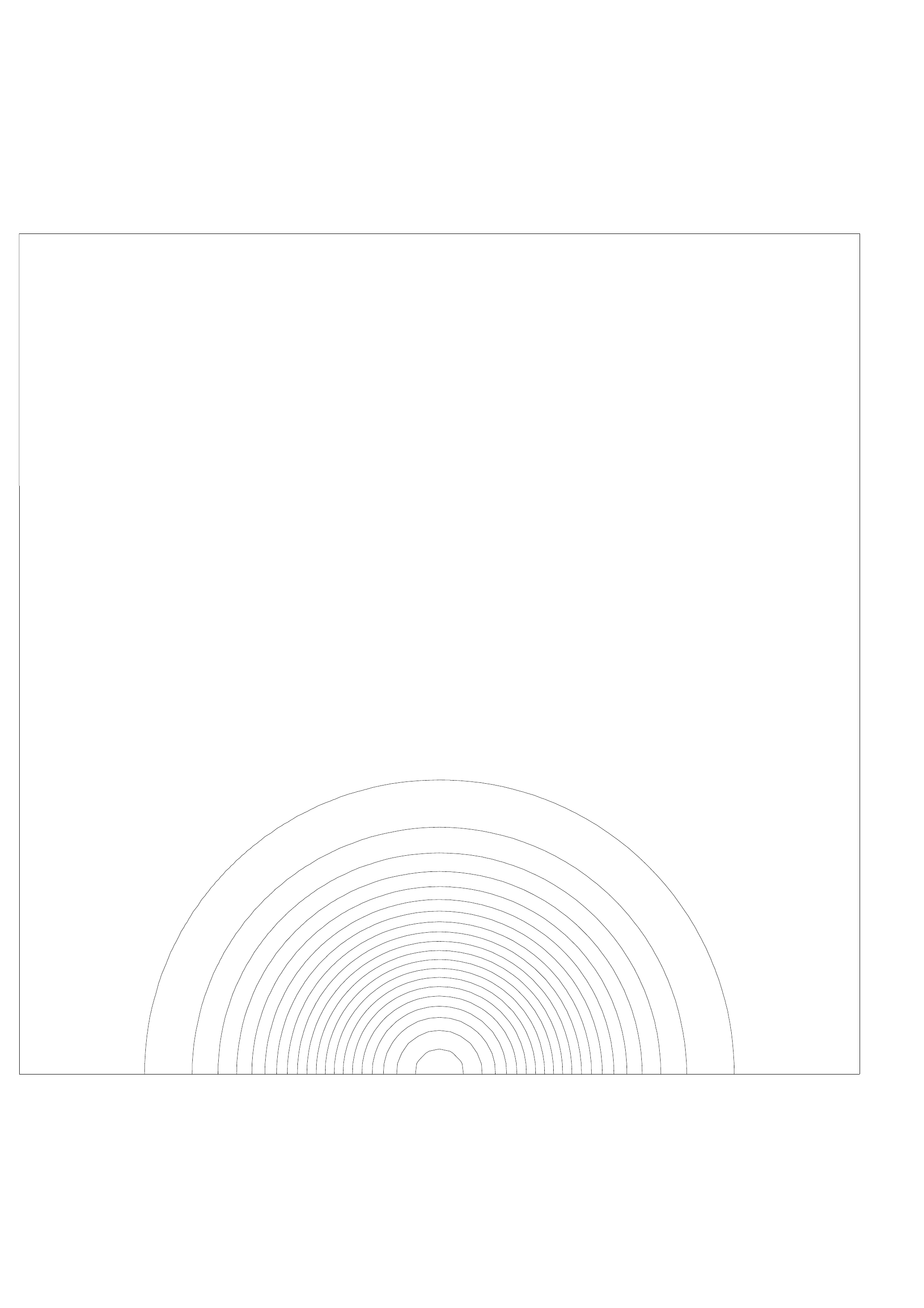}
\caption{From left to right: initial data on fine mesh, unstabilized
  solution, stabilized solution ($nele=80$, final time $t=1$).}
\label{fig:data2}
\end{figure}
\begin{figure}[t]
\centering
\includegraphics[width=0.45\linewidth]{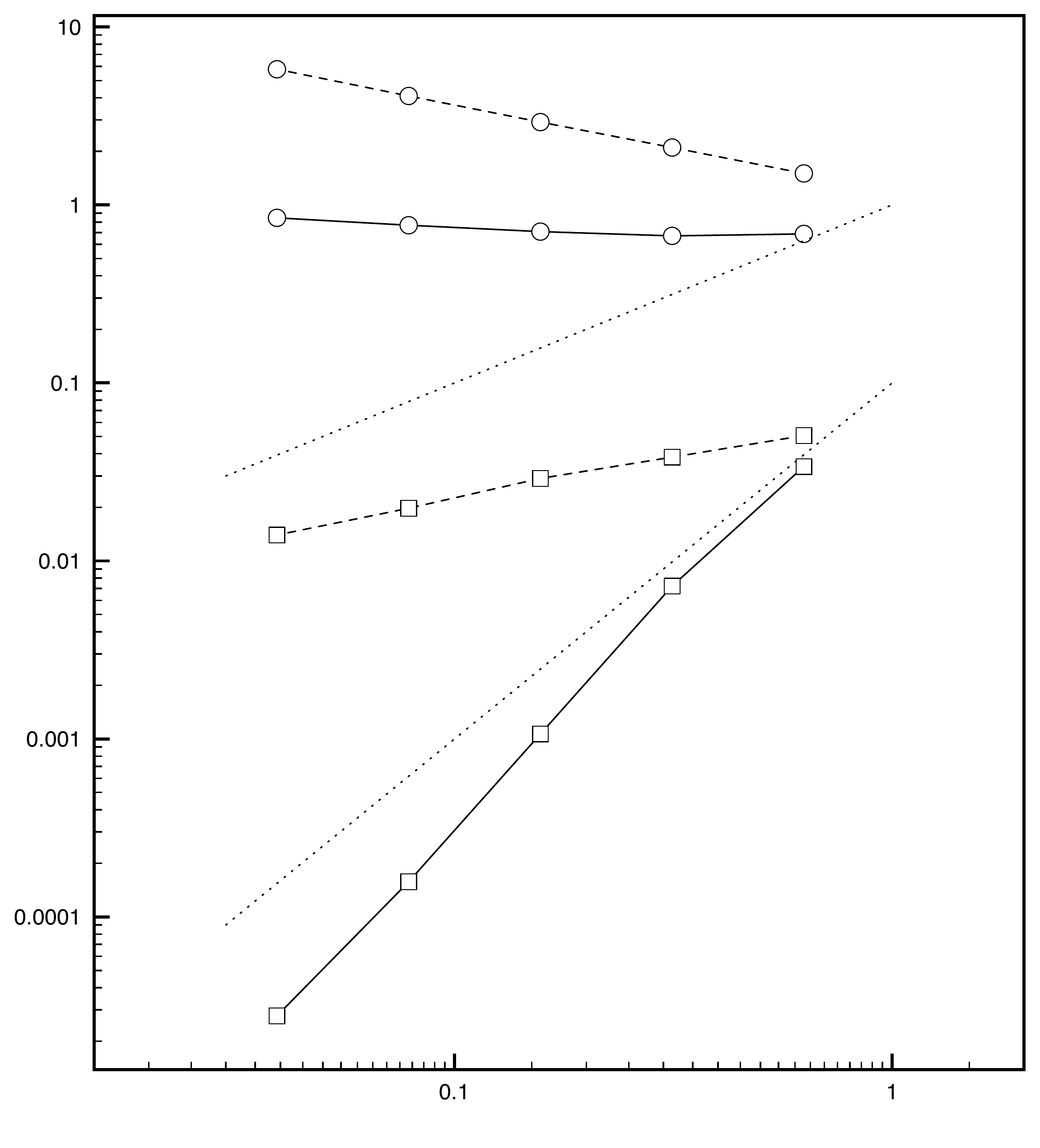}
\includegraphics[width=0.45\linewidth]{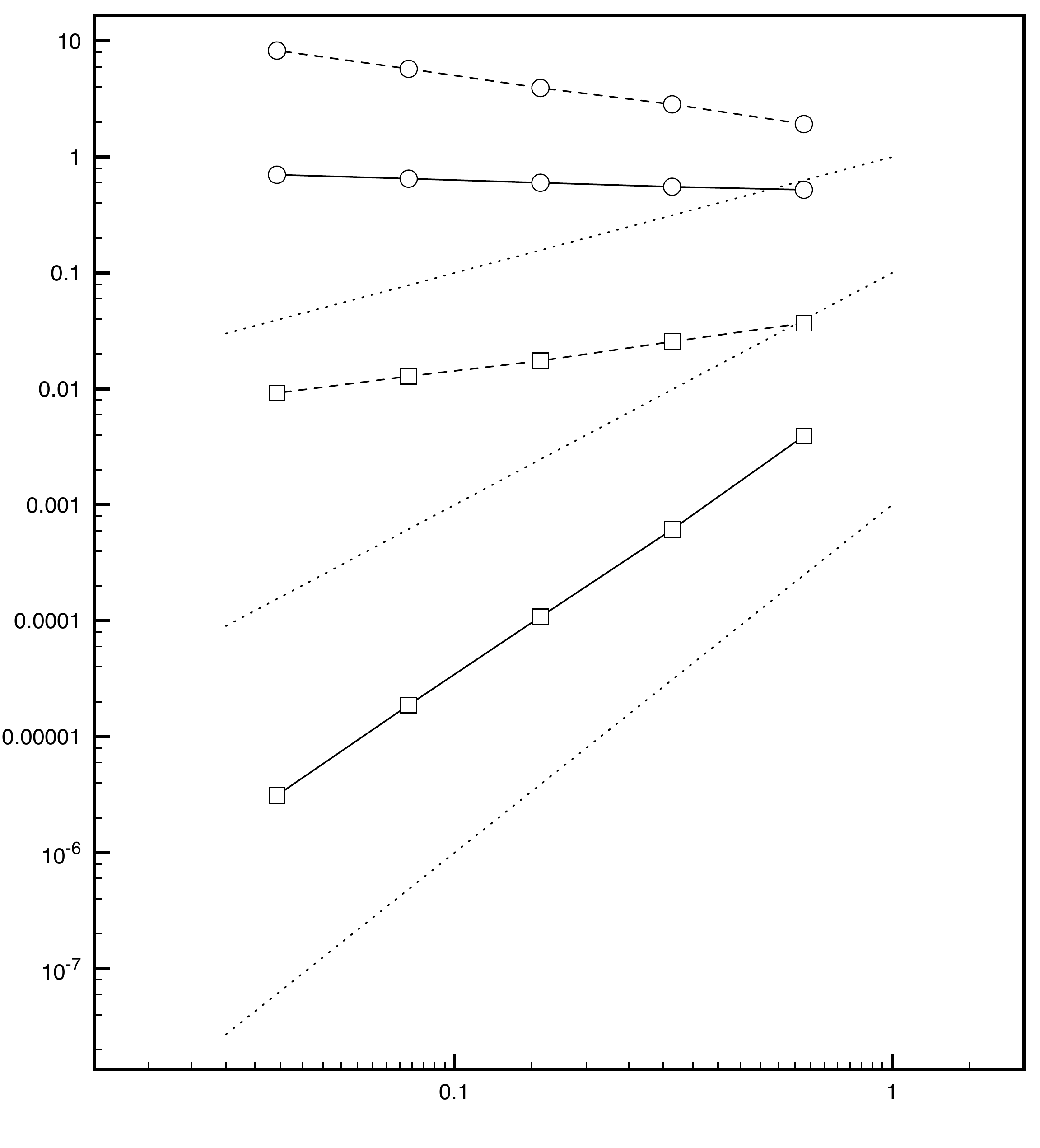}
\caption{Comparison of errors plotted against mesh size $h$ for stabilized (full line) and unstabilized (dashed
  line) methods with $P_1$ (left) and $P_2$ (right).  Initial data
  from figure \ref{fig:data2} (left plot). The space time
  error in
  material derivative has circle markers. The final time 
 global $L^2$-error has square marker. The dotted reference lines
 have slope $1,2$ from top to bottom in the left graphic and $1,2,3$ from top to bottom in the right graphic.}
\label{fig:conv_tube}
\end{figure}
\subsection{Long term stability}
To see the effect of perturbations on the solution for long time we
revisit the computational example of the previous section, but extend
the time interval to $(0,3)$. The cylinder leaves the domain at
$t=0.7$ and at the final time the solution is very small.
One would then expect the error of the method to go to zero with machine
precision, since the solution to approximate is very close to the trivial zero
solution. In figure \ref{fig:longtime} the global $L^2$-norm is reported, for two
consecutive meshes ($nele=40$ and $nele = 80$) and both the stabilized
(full line) and the unstabilized (dashed line) methods. In the
stabilized case the improvement of the approximation at $t=0.7$, when
the cylinder leaves the domain, is clearly visible and the solution
also improves as the Gaussian is evacuated. We see convergence to zero
at machine precision of the error and also convergence under mesh
refinement. In the unstabilized case the change at time $t=0.7$ is
barely visible, the error decreases only very slowly in time and not noticeably
under mesh refinement. Similarly as in the previous example, we conclude that the standard Galerkin method
with weakly imposed boundary conditions in our
  simulations fails to evacuate the high
frequency perturbations produced by the discontinuous initial data on
the two meshes considered.
\begin{figure}[t]
\centering
\includegraphics[width=0.45\linewidth]{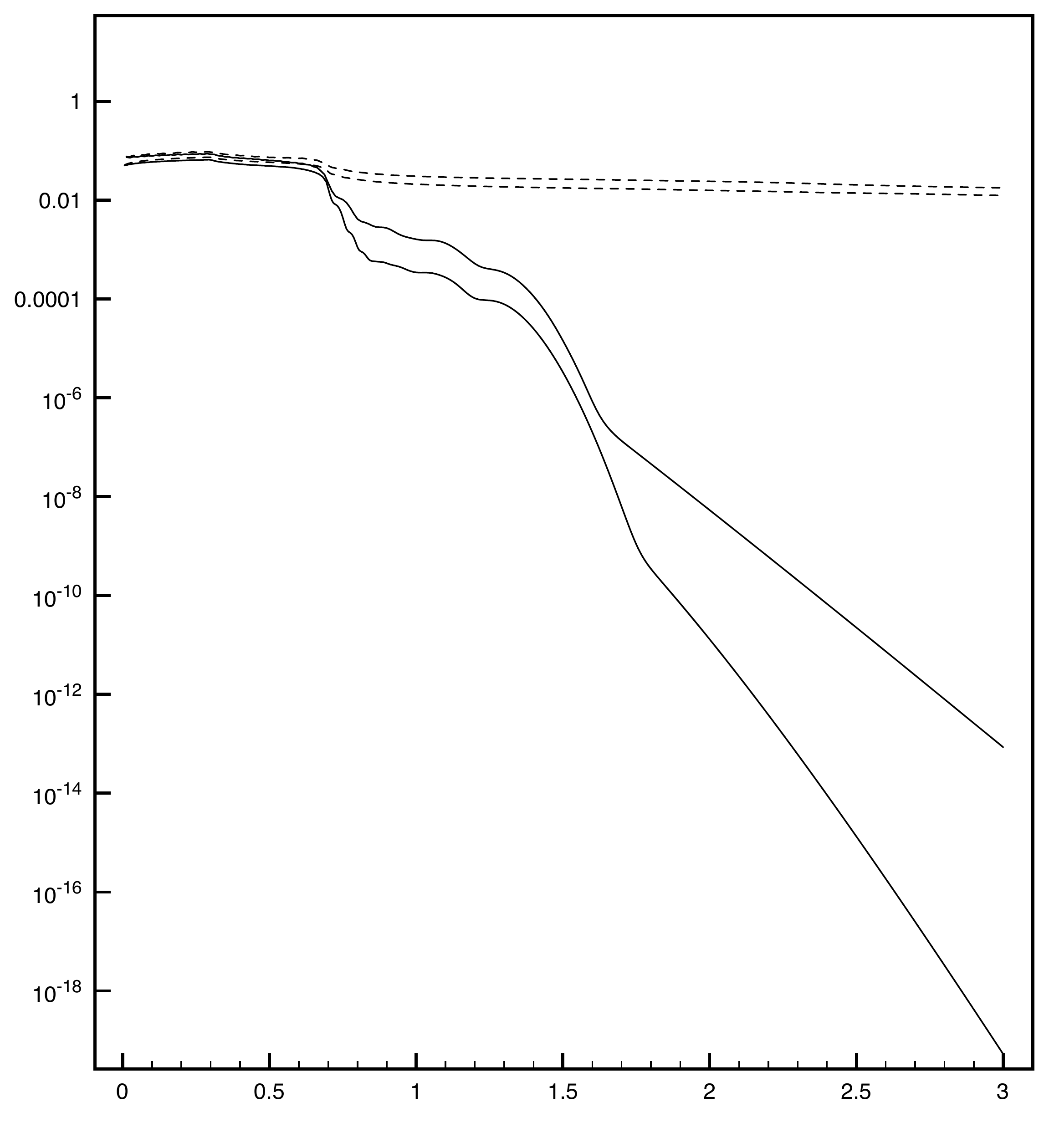}
\includegraphics[width=0.45\linewidth]{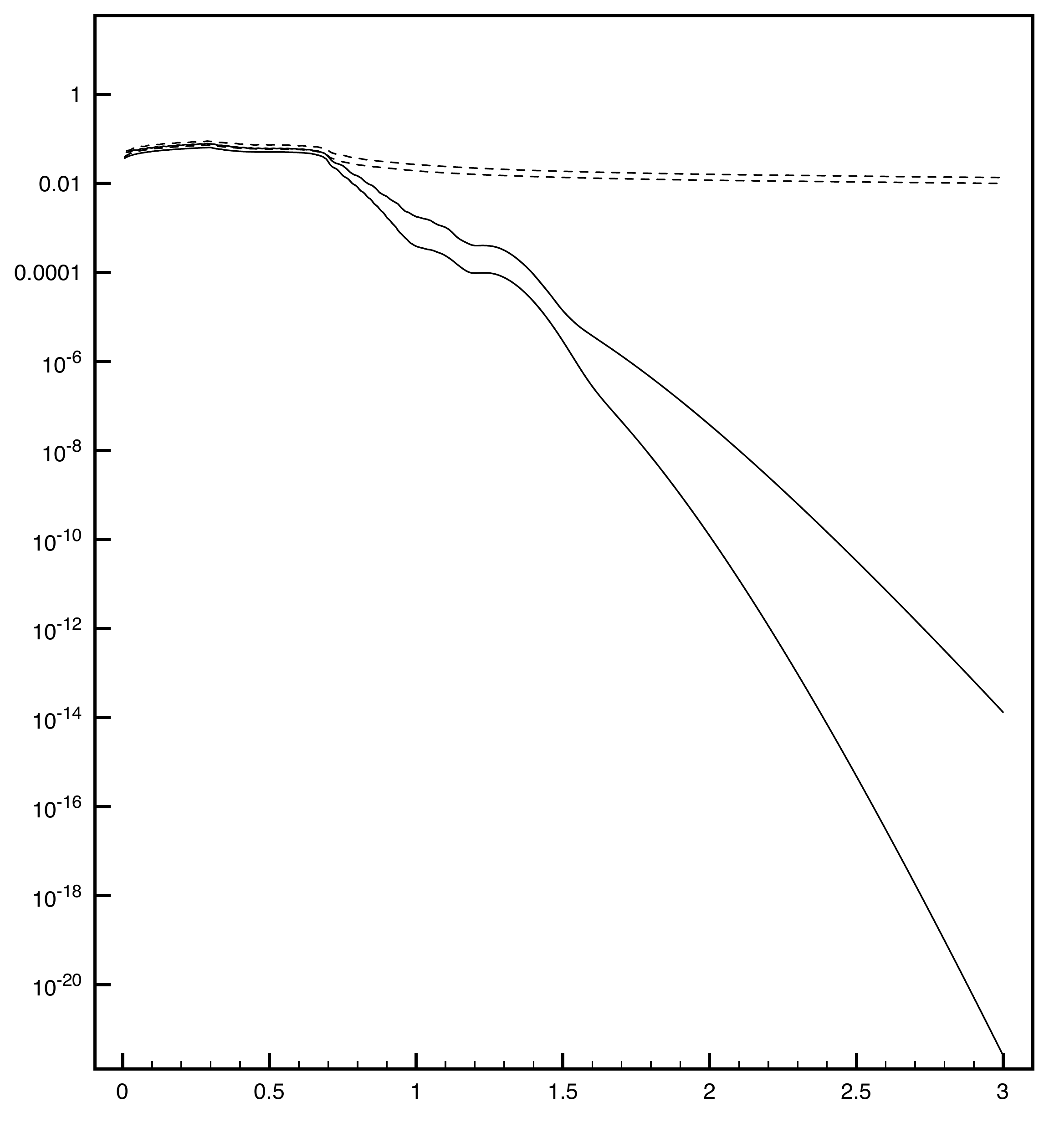}
\caption{Comparison of stabilized (full line) and unstabilized (dashed
  line) methods with $P_1$ (left) and $P_2$ (right)
  approximation. Evolution of the global $L^2$-error in time. Initial data from figure
  \ref{fig:data2}, left graphic. In each case the upper curve has
  $nele=40$ and the lower curve $nele=80$.}
\label{fig:longtime}
\end{figure}

\section*{Appendix}
Here we give the proofs of the approximation results for the
$L^2$-projection, Lemma \ref{lem:weight_approx} and Lemma
\ref{lem:super} and finally the weighted discrete interpolation result
\eqref{eq:stab_weight2}. 

First we give a simple super approximation result for
the Lagrange interpolant $i_h$ that will be useful for the proofs of
inequalities \eqref{super1} and \eqref{super2}. For a general
discussion of discrete commutator properties we refer to \cite{Bert99}.
\begin{lemma}\label{lem:Lagrange_sup}
Let $\phi\in W^{k+1,\infty}(\Omega)$ satisfying \eqref{eq:varpi_bound} with $K>1$
and $h<1$. Then for $h^{\frac12}/K$
sufficiently small,
there holds for all $v_h \in V_h$, $\ele \in \mathcal{T}$,
\[
|\phi v_h - i_h (\phi v_h) |_{H^s(\ele)} \leq C h^{\frac12-s}/K \,\|
\phi v_h\|_{\ele},\quad 0 \leq s \leq 2.
\]
\end{lemma}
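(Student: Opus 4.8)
The plan is to reduce everything to a single element $S \in \mathcal{T}$ and to combine the classical local interpolation error bound for $i_h$ with a Leibniz expansion of $D^{k+1}(\phi v_h)$, exploiting the decisive fact that the top-order contribution $\phi\,D^{k+1}v_h$ vanishes identically because $v_h\vert_S$ is a polynomial of degree $k$. Concretely, since $\phi \in W^{k+1,\infty}(\Omega)$ and $v_h\vert_S \in \mathbb{P}_k(S)$, the function $\phi v_h\vert_S$ is continuous and lies in $H^{k+1}(S)$, so the Lagrange interpolant $i_h(\phi v_h)$ is well defined and the standard scaling argument on shape-regular simplices gives, for every $0 \le s \le 2 \le k+1$,
\[
|\phi v_h - i_h(\phi v_h)|_{H^s(S)} \leq C h^{k+1-s}\,|\phi v_h|_{H^{k+1}(S)}.
\]

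Next I would estimate $|\phi v_h|_{H^{k+1}(S)}$. By the Leibniz rule, $D^{k+1}(\phi v_h) = \sum_{j=0}^{k+1}\binom{k+1}{j} D^j\phi\, D^{k+1-j}v_h$, and the $j=0$ term $\phi\,D^{k+1}v_h$ vanishes on $S$; hence every surviving term carries at least one derivative of $\phi$, and
\[
|\phi v_h|_{H^{k+1}(S)} \leq C\sum_{j=1}^{k+1}\|D^j\phi\|_{L^\infty(S)}\,\|D^{k+1-j}v_h\|_S.
\]
Now I apply the hypothesis \eqref{eq:varpi_bound} (which $\phi$ satisfies with $\sigma = K\sqrt h$) to get $\|D^j\phi\|_{L^\infty(S)} \le C\sigma^{-j}\max_S\phi$, and the inverse inequality \eqref{eq:inverse_ineq} applied recursively on the polynomial $v_h\vert_S$ to get $\|D^{k+1-j}v_h\|_S \le C h^{-(k+1-j)}\|v_h\|_S$. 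Finally, since $|\nabla\phi| \le C\sigma^{-1}\phi$ with $\sigma = K\sqrt h$, the relative oscillation of $\phi$ over an element of diameter $\lesssim h$ is of order $\sqrt h/K$, so for $h^{1/2}/K$ small enough $\max_S\phi \le 2\min_S\phi$ and therefore $\max_S\phi\,\|v_h\|_S \le 2\|\phi v_h\|_S$ (the single-time version of \eqref{eq:move_varpi}). Collecting these bounds,
\[
|\phi v_h|_{H^{k+1}(S)} \leq C h^{-(k+1)}\sum_{j=1}^{k+1}\Big(\tfrac{\sqrt h}{K}\Big)^{j}\|\phi v_h\|_S \leq C h^{-(k+1)}\,\tfrac{\sqrt h}{K}\,\|\phi v_h\|_S,
\]
where in the last step $\sqrt h/K < 1$ lets me keep only the lowest power of the geometric-type sum. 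Substituting into the interpolation estimate yields exactly $|\phi v_h - i_h(\phi v_h)|_{H^s(S)} \le C h^{1/2-s}K^{-1}\|\phi v_h\|_S$.

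The only genuinely delicate points are, first, the bookkeeping that makes the $j=0$ term disappear — this is precisely what upgrades a generic $O(1)$ super-approximation bound into one with the crucial gain of a factor $\sqrt h/K$ — and second, the passage from $\max_S\phi\,\|v_h\|_S$ to $\|\phi v_h\|_S$, which is where the smallness assumption on $h^{1/2}/K$ and shape regularity enter. Both are routine once stated carefully, but they must be spelled out, since the resulting $K^{-1}$ factor is exactly what the subsequent super-approximation estimates \eqref{super1} and \eqref{super2} rely on.
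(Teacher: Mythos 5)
Your argument coincides with the paper's proof: both start from the local interpolation estimate $|\phi v_h - i_h(\phi v_h)|_{H^s(\ele)}\leq Ch^{k+1-s}\|D^{k+1}(\phi v_h)\|_{\ele}$, expand $D^{k+1}(\phi v_h)$ by Leibniz so that the top-order term $\phi\,D^{k+1}v_h$ vanishes, eliminate the remaining derivatives of $v_h$ by repeated inverse inequalities, bound $|\phi|_{W^{l,\infty}(\ele)}$ via \eqref{eq:varpi_bound} to extract the geometric sum in $h^{1/2}/K$, and finally pass from $\|\phi\|_{L^\infty(\ele)}\|v_h\|_{\ele}$ to $\|\phi v_h\|_{\ele}$ using the bounded relative oscillation of $\phi$ on an element (the single-time version of \eqref{eq:move_varpi}). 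The proposal is correct and follows essentially the same route, with only cosmetic differences in how the geometric sum is organized.
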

\begin{proof}
By the approximation properties of $i_h$
there holds
\begin{equation}\label{eq:interpol_Lag}
|\phi v_h - i_h (\phi v_h)|_{H^{s}(\ele)} \leq C h^{k+1-s} \|D^{k+1}
(\phi v_h)\|_{\ele}.
\end{equation}
Using the product rule and the fact that $D^{k+1} v_h = 0$ since $v_h
\vert_\ele \in \mathbb{P}_k(\ele)$, we see that
\[
\|D^{k+1} (\phi v_h)\|_{\ele} \leq C\sum_{l=1}^{k+1} |
\phi|_{W^{l,\infty}(\ele)} |v_h|_{H^{k+1-l}(\ele)}. 
\]
By applying the inverse inequality \eqref{eq:inverse_ineq} repeatedly 
the derivatives on $v_h$ can be eliminated at the price of factors of
the inverse of $h$,
\begin{equation}\label{eq:dphi_bound}
 h^{k+1-s} \|D^{k+1}
(\phi v_h)\|_{\ele} \leq C h^{1-s} \|v_h\|_{\ele} \sum_{l=1}^{k+1} h^{l-1} |
\phi|_{W^{l,\infty}(\ele)}.
\end{equation}
Using the bound \eqref{eq:varpi_bound} it then follows that
\begin{equation}\label{eq:phi_prop_bound}
 \sum_{l=1}^{k+1} h^{l-1} |
\phi|_{W^{l,\infty}(\ele)} \leq  C  \sum_{l=1}^{k+1} h^{l-1} (K h^{\frac12})^{-l}
\|\phi\|_{L^\infty(\ele)}  \leq C  (K h^{\frac12})^{-1}\|\phi\|_{L^\infty(\ele)}.
\end{equation}
Where we used the assumption that $h<1$ and $K>1$ in the last
inequality. 
Combining the bounds \eqref{eq:interpol_Lag}, \eqref{eq:dphi_bound} and
\eqref{eq:phi_prop_bound} it follows that
\[
\|\phi v_h - i_h (\phi v_h) \|_{H^{s}(\ele)} \leq C h^{1-s} (K h^{\frac12})^{-1} \|
\phi\|_{L^\infty(\ele)} \|v_h\|_{\ele}.
\]
The claim now follows by applying \eqref{eq:move_varpi}.
\end{proof}
\begin{proof}(Lemma \ref{lem:weight_approx}).
First note that by the construction of $\varpi$ there holds
\[
|\nabla \varpi| \leq C (\sqrt{h} K)^{-1} \varpi \leq (C \sqrt{h}/K)
h^{-1} \varpi
\]
and we see that we may apply \eqref{L2_approx1}-\eqref{L2_approx3} with
$\phi  = \varpi$ for $ (C \sqrt{h}/K)$ small enough.
\paragraph{\bf{Proof of \eqref{eq:move_varpi}}}
To prove \eqref{eq:move_varpi}, consider a triangle $\ele$, assume that
the max value in $\max_{(x,t) \in \ele\times  I_\delta} \varpi(x,t)$ is taken
at $(x^*,t^*)\in \ele\times I_\delta$. Then
\begin{multline*}
\max_{(x,t) \in \ele \times  I_\delta} \varpi(x,t) \|v\|_{\ele} = \|\varpi(x^*,t^*) v\|_{\ele} \leq
\|(\varpi(x^*,t^*) - \varpi(\cdot, \tilde t)) v\|_{\ele} + \| \varpi v\|_{\ele} \\
 \leq C h^{\frac12}
K^{-1} \varpi(x^*,t^*)  \|v\|_{\ele} + \| \varpi(\cdot, \tilde t) v\|_{\ele},
\end{multline*}
for any $\tilde t \in I_\delta$.
Assuming that $C h^{\frac12}
K^{-1} \leq \frac12$ we see that
\begin{equation}\label{eq:move_varpi_end}
\max_{(x,t) \in \ele\times  I_\delta} \varpi(x,t) \|v\|_{\ele} \leq 2 \|
\varpi(\cdot, \tilde t) v\|_{\ele}, \forall \tilde t \in I_\delta.
\end{equation}
\paragraph{\bf{Proof of \eqref{eq:error_est}}}
For the proof of \eqref{eq:error_est} first apply the stabilities
\eqref{L2_approx1}-\eqref{L2_approx2}. For the $L^2$-norm this yields
\[
\|\varpi (v - \pi_h v)\|_\Omega  \leq \|\varpi (v - i_h v)\|_\Omega +
\|\varpi \pi_h ( i_h v- v)\|_\Omega \leq  C \|\varpi (v - i_h v)\|_\Omega.
\]
Then apply interpolation locally
and \eqref{eq:move_varpi}.
\begin{equation*}
\|\varpi (v - i_h v)\|_{\ele} \leq  \max_{x \in \ele} \varpi(x) \|v - i_h
v\|_{\ele} 
\leq C \max_{x \in \ele} \varpi(x) h^{k+1} \|D^{k+1} v\|_{\ele} \leq 2 C h^{k+1} \|\varpi D^{k+1} v\|_{\ele}.
\end{equation*}
The claim follows by summing over $\ele \in \mathcal{T}$.
The bound on the $H^1$-norm is identical.
\paragraph{\bf{Proof of \eqref{eq:stab_est}}}
The stabilization operator is defined by the sum of the jumps over the
faces of the element of the gradient. The first step is to split that
jump using the triangle inequality over each face. Given a face $F = \partial S_1
\cap \partial S_2$ for elements $S_1$ and $S_2$ this takes the form.
\[
\|\jump{\nabla (v - \pi_h v)}\|_F^2 \leq  2 (\|\nabla (v - \pi_h
v)\|_{\partial S_1 \cap F}^2 + \|\nabla (v - \pi_h
v)\|_{\partial S_2 \cap F}^2).
\]
By breaking up the jumps on the
contributions from
respective element faces in this was we have
\[
s_{\varpi}(v - \pi_h v,v - \pi_h v) \leq C \sum_{\ele \in \mathcal{T}}
(\max_{x \in \ele} \varpi(x))^2 h^2
\beta_\infty \|\nabla (v- \pi_h v) \|_{\partial \ele}^2.
\]
Now apply the trace inequality \eqref{eq:trace} on each element to see that
\[
\|\nabla (v- \pi_h v) \|_{\partial \ele} \leq h^{\frac12} |\nabla (v-
\pi_h v) |_{H^1(\ele)} + h^{-\frac12} \|\nabla (v- \pi_h v) \|_{\ele}.
\]
For the first term in the right hand side add and subtract $i_h u$,
split it using a triangle inequality and
use an inverse inequality in one of the terms and interpolation in the other to
see that
\begin{multline*}
|\nabla (v-
\pi_h v) |_{H^1(\ele)} \leq |\nabla (v-
i_h v) |_{H^1(\ele)} + |\nabla (i_h v-
\pi_h v) |_{H^1(\ele)} \\
\leq C h^{k-1} \|D^{k+1} v\|_{\ele} + C h^{-1} \|\nabla (v-
\pi_h v) \|_{\ele}.
\end{multline*}
It follows using \eqref{eq:move_varpi} that
\[
\sum_{\ele \in \mathcal{T}} \varpi(x)^2 h^2
\beta_\infty\|\nabla (v- \pi_h v) \|^2_{\partial \ele} \leq
C  \beta_\infty h^{2k+1} \| D^{k+1} v\|^2_\varpi  + C \beta_\infty h \|\nabla (v-
\pi_h v) \|_{\varpi}^2.
\]
The claim now follows by applying \eqref{eq:error_est} to the second
term of the right hand side.
\end{proof}

\begin{proof}(Lemma \ref{lem:super})
\paragraph{\bf{Proof of \eqref{super1}}}
To prove \eqref{super1} recall that 
$$
|\nabla \varpi^{-1}| = |\varpi^{-2} \nabla \varpi| \leq C (\sqrt{h} K)^{-1}\varpi^{-1}
$$
and we may apply \eqref{L2_approx1} with
$\phi=\varpi^{-1}$ to get
\[
\|\varpi^{-1}(\varpi^2 v_h - \pi_h (\varpi^2 v_h))\|_\Omega \leq C
\|\varpi^{-1}(\varpi^2 v_h - i_h (\varpi^2 v_h))\|_\Omega.
\]
Consider one simplex $\ele$, take out the weight and then then apply Lemma
\ref{lem:Lagrange_sup} followed by \eqref{eq:move_varpi}
\[
\|\varpi^{-1}(\varpi^2 v_h - i_h (\varpi^2 v_h))\|_{\ele} \leq (\max_{x \in
  \ele} \varpi^{-1}) \|\varpi^2 v_h - i_h (\varpi^2 v_h)\|_{\ele} \leq C
h^{\frac12}/K \|\varpi v_h\|_{\ele}.
\]
Finally take the square of both sides and sum over the simplices. The
$H^1$-norm estimate follows using similar arguments.
\paragraph{\bf{Proof of \eqref{super2}}} For the
inequality \eqref{super2} we consider one element of the sum and apply
the trace inequality \eqref{eq:trace},
\begin{alignat}{1}\nonumber
\|\varpi^{-1}\nabla(\varpi^2 v_h - \pi_h (\varpi^2 v_h))\|_{\partial
  \ele} & \leq \max_{x \in \ele} \varpi^{-1} h^{\frac12}|\nabla(\varpi^2 v_h
- \pi_h (\varpi^2 v_h))|_{H^1(\ele)}  \\
& + \max_{x \in \ele} \varpi^{-1} h^{-\frac12}\|\nabla(\varpi^2 v_h
- \pi_h (\varpi^2 v_h))\|_{\ele}. \label{eq:2nd_trace_term}
\end{alignat}
In the first term, add and subtract $\nabla i_h (\varpi^2 v_h)$ and
use the triangle inequality followed by an inverse inequality to
obtain
\begin{multline*}
\max_{x \in \ele} \varpi^{-1} h^{\frac12}|\nabla(\varpi^2 v_h
- \pi_h (\varpi^2 v_h))|_{H^1(\ele)} \\
\leq C\max_{x \in \ele} \varpi^{-1}
h^{\frac12} (|\nabla(\varpi^2 v_h
- i_h (\varpi^2 v_h))|_{H^1(\ele)} + h^{-1} \|\nabla(i_h \varpi^2 v_h
- \pi_h (\varpi^2 v_h))\|_{\ele}).
\end{multline*}
For the first term in the right hand side we use Lemma
\ref{lem:Lagrange_sup}, with $s=2$,
\begin{equation}\label{eq:hterm}
h^{\frac12} \max_{x \in \ele} \varpi^{-1}|\nabla(\varpi^2 v_h
- i_h (\varpi^2 v_h))|_{H^1(\ele)} \leq C \max_{x \in \ele}
\varpi^{-1}K^{-1} h^{-1} \|\varpi^2 v_h\|_{\ele} \leq C K^{-1} h^{-1} \|\varpi v_h\|_{\ele}.
\end{equation}

To bound the second term we use \eqref{eq:move_varpi}, sum over $\ele
\in \mathcal{T}$
and use the stability of the $L^2$-projection \eqref{L2_approx2} to
get
\[
\sum_{\ele \in \mathcal{T}} (\max_{x \in \ele} \varpi^{-2})
h^{-1} \|\nabla(i_h \varpi^2 v_h
- \pi_h (\varpi^2 v_h))\|_{\ele}^2 \leq C h^{-1} \|\varpi^{-1} \nabla(i_h \varpi^2 v_h
- \varpi^2 v_h)\|_{\Omega}^2.
\]
We see that after summation over $\ele$ the second term in the right hand
side of
\eqref{eq:2nd_trace_term} also is on this form. 

On every $\ele$ take out the factor $\max_{x \in \ele} \varpi^{-1}$ and
apply Lemma \ref{lem:Lagrange_sup} followed by
\eqref{eq:move_varpi} to arrive at
\[
h^{-\frac12} \|\varpi^{-1} \nabla(i_h \varpi^2 v_h
- \varpi^2 v_h)\|_{\Omega}\leq C K^{-1} h^{-1} \|\varpi  v_h\|_{\Omega}
\]
which together with \eqref{eq:hterm}, summed over $\ele$, concludes the proof of \eqref{super2}.
\end{proof}
\begin{proof}(Inequality \eqref{eq:stab_weight2}).
For simplicity consider the form $\bfbeta \cdot \nabla u_h = \partial_x
u_h$. Using the product rule $\partial_x (\varpi^2 v_h) = (\partial_x
\varpi^2) v_h+ \varpi^2 \partial_x v_h $ and the triangle inequality it follows that
\begin{multline}\label{eq:first_step}
\|h^{\frac12} (\partial_x  (\varpi^2 v_h)
- \pi_h(\partial_x (\varpi^2 v_h)))\|_{\varpi^{-1}}^2 \leq
2h \|(\partial_x \varpi^2) v_h - \pi_h( \partial_x \varpi^2
v_h)\|_{\varpi^{-1}}^2 \\
+ 
2h \|(\varpi^2 \partial_x v_h)
- \pi_h(\varpi^2\partial_x v_h)\|_{\varpi^{-1}}^2.
\end{multline}
Noting that by the $L^2$-stability of $\pi_h$, the bound of $\varpi$,
Lemma \ref{lem:Lagrange_sup}, \eqref{eq:varpi_bound} and \eqref{eq:move_varpi}
\[
h \|(\partial_x \varpi^2) v_h - \pi_h( \partial_x \varpi^2
v_h)\|_{\varpi^{-1}}^2  \leq C h \|(\partial_x \varpi^2) v_h - i_h( \partial_x \varpi^2
v_h)\|_{\varpi^{-1}}^2  \leq C K^{-2} \|v_h\|^2_\varpi.
\]
It only remains to bound the second term of \eqref{eq:first_step}. We add and subtract $\pi_0 \varpi^2$ defined by 
\[
\pi_0 \varpi^2 \vert_{\ele} = |\ele|^{-1} \int_{\ele} \varpi^2
\]
and use the triangle inequality to obtain
\begin{alignat*}{1}
h \|(\varpi^2 \partial_x v_h)
- \pi_h(\varpi^2 \partial_x v_h)\|_{\varpi^{-1}}^2 & \leq
Ch \|( \varpi^2 \partial_x v_h - (\pi_0 \varpi^2) \partial_x v_h
)\|_{\varpi^{-1}}^2\\
& +C h \|(  (\pi_0 \varpi^2) \partial_x v_h
- \pi_h( (\pi_0 \varpi^2) \partial_x v_h )\|_{\varpi^{-1}}^2\\
&+ Ch \| ( \pi_h( (\pi_0 \varpi^2) \partial_x v_h )
- \pi_h(\varpi^2 \partial_x v_h))\|_{\varpi^{-1}}^2 = T_1+T_2+T_3.
\end{alignat*}
First, for $T_3$, observe that by the stability of the $L^2$-projection \eqref{L2_approx1}
we have
\begin{equation}\label{eq:T3}
h \| ( \pi_h( (\pi_0 \varpi^2) \partial_x v_h )
- \pi_h(\varpi^2 \partial_x v_h))\|_{\varpi^{-1}}^2 \leq C h \|( \varpi^2 \partial_x v_h - (\pi_0 \varpi^2) \partial_x v_h
)\|_{\varpi^{-1}}^2 \leq C T_1,
\end{equation}
so only $T_1$ and $T_2$ need to be bounded. For $T_1$, by the
approximation $\|\varpi^2 - \pi_0 \varpi^2\|_{L^\infty(\ele)} \leq C h^{\frac12}/K
\|\varpi\|_{L^\infty(\ele)}^2$ and \eqref{eq:move_varpi} we have for one
simplex $\ele$,
\[
\|\varpi^{-1} ( \varpi^2 \partial_x v_h - (\pi_0 \varpi^2) \partial_x v_h
)\|_{\ele} \leq  h^{\frac12}/K 
\max_{x \in \ele}\varpi^2 \max_{x \in \ele} \varpi^{-1} \|\partial_x v_h\|_{\ele}
\leq C h^{-\frac12} K^{-1} \|\varpi v_h\|_{\ele}.
\]
Taking the square of both sides and summing over all simplices yields the bound for $T_1$,
\begin{equation}\label{eq:T1}
h \|( \varpi^2 \partial_x v_h - (\pi_0 \varpi^2) \partial_x v_h
)\|_{\varpi^{-1}}^2 \leq C K^{-2} \|v_h\|_\varpi^2.
\end{equation}
Finally for the term $T_2$ we use \eqref{eq:stab_weight} with $\bfbeta_0
= (\pi_0 \varpi^2) e_x$. This leads to 
\begin{equation}\label{eq:T2b}
h \|(  (\pi_0 \varpi^2) \partial_x v_h
- \pi_h( (\pi_0 \varpi^2) \partial_x v_h )\|_{\varpi^{-1}}^2 \leq
C_{ws} s_{\varpi^{-1}}( (\pi_0 \varpi^2) v_h,(\pi_0 \varpi^2) v_h).
\end{equation}
Adding and subtracting $\varpi^2$ and using the triangle inequality
and the fact that $\varpi^2$ is smooth leads to 
\begin{equation}\label{eq:T2c}
 s_{\varpi^{-1}}( (\pi_0 \varpi^2) v_h,(\pi_0 \varpi^2) v_h) \leq
 2 s_{\varpi}(v_h,v_h) + 2 s_{\varpi^{-1}}( (\varpi^2 -\pi_0 \varpi^2)
 v_h,(\varpi^2 -\pi_0 \varpi^2) v_h).
\end{equation}
For the second term of the right hand side consider the boundary of
one triangle and
apply the trace inequality \eqref{eq:trace}, followed by the approximation of $\pi_0$ to get
\begin{equation*}
\|h (\varpi^2 -\pi_0 \varpi^2) \nabla v_h\|_{\partial \ele} \leq C \max_{x
  \in \ele} \varpi^2 K^{-1} h^{\frac32} (h^{\frac12} |\nabla v_h|_{H^1(\ele)}
+h^{-\frac12} \|\nabla v_h\|_{\ele}) \\
\leq C K^{-1} \|\varpi^2 v_h\|_{\ele}.
\end{equation*}
The last step followed using the inverse inequality \eqref{eq:inverse_ineq} and
\eqref{eq:move_varpi}. Proceeding by
applying the previous bound to all triangle faces, it follows that
\begin{multline}\label{eq:T2d}
 s_{\varpi^{-1}}( (\varpi^2 -\pi_0 \varpi^2)
 v_h,(\varpi^2 -\pi_0 \varpi^2) v_h)\leq
 C \sum_{\ele \in \mathcal{T}}  \max_{x
  \in \ele} \varpi^{-2} \|h (\varpi^2 -\pi_0 \varpi^2) \nabla
v_h\|_{\partial \ele}^2 \\
 \leq C\sum_{\ele \in \mathcal{T}}  \max_{x
  \in \ele} \varpi^{-2}  K^{-2} \|\varpi^2 v_h\|_{\ele}^2 \leq C K^{-2} \| v_h\|_\varpi^2
\end{multline}
where the last step follows using \eqref{eq:move_varpi}. The proof is
now finished by collecting the bounds \eqref{eq:T3} -- \eqref{eq:T2d}.
\end{proof}
\begin{proof}(Lemma \ref{lem:technical_tim}).
Using $\delta t \leq C h$ and \eqref{eq:varpi_bound} there holds
\begin{multline*}
 \|v_h\int_{t_{n-1}}^{t_n} \partial_t
  \varpi ~\mbox{d}t\|_\Omega \leq C\delta t/(K h^{\frac12}) \left( \sum_{\ele
    \in \mathcal{T}} \max_{(x,t) \in \ele \times [t_{n-1},t_n]}
  \varpi(x,t)^2 \|v_h \|_{\ele}^2\right)^{\frac12}\leq \delta t^{\frac12}   C/K \|v_h\|_{\varpi_{n}}.
\end{multline*}
For the second inequality we applied \eqref{eq:move_varpi} elementwise
and then upper bounded $\min_{t \in [t_{n-1},t_n]}
  \| v_h \varpi(\cdot,t)\|_S$ by $\|v_h\|_{\varpi_{n}}$.
For the bound of the second term 
observe that, estimating
\[
|\int_{t_{n-1}}^{t_n} \int_{t}^{t_n}\partial_{tt}
  \varpi^2 ~\mbox{d}s\,\mbox{d}t| \leq \delta t^2 \max_{t \in [t_{n-1},t_n] }|\partial_{tt}
  \varpi^2 |
\]
and then
applying \eqref{eq:varpi_bound} repeatedly with $l=1$ and $2$, to show
\[
\max_{t \in [t_{n-1},t_n] }|\partial_{tt}
  \varpi^2 | \leq C^2 h^{-1} K^{-2} \max_{t \in [t_{n-1},t_n] }\varpi^2.
\] 
It follows that for all $\ele \in \mathcal{T}$,
\[
\|v_h\left|\int_{t_{n-1}}^{t_n} \int_{t}^{t_n}\partial_{tt}
  \varpi^2 ~\mbox{d}s\,\mbox{d}t\right|^{\frac12}\|_{\ele} \leq \delta
t^{\frac12} C K^{-1}\max_{(x,t)
  \in \ele \times [t_{n-1},t_n] }\varpi \|v_h\|_{\ele}.
\]
Applying \eqref{eq:move_varpi} we conclude that
\[
\sum_{\ele
    \in \mathcal{T}} \|v_h\left|\int_{t_{n-1}}^{t_n} \int_{t}^{t_n}\partial_{tt}
  \varpi^2 ~\mbox{d}s\,\mbox{d}t\right|^{\frac12}\|_{\ele}^2 \leq \delta
t
 C^2 K^{-2} \sum_{\ele
    \in \mathcal{T}} \min_{t \in
  [t_{n-1},t_n]} \|v_h \varpi(\cdot,t)\|^2_{\ele} \leq \delta
t
 C^2 K^{-2} \|v_h\|^2_{\varpi_{n}}.
\]
\end{proof}
\section*{Declarations}
\begin{itemize}
\item[--]{\bf{Funding}}. The author acknowledges funding from EPSRC grants
  EP/P01576X/1 and EP/T033126/1.
\item[--] {\bf{Conflicts of interest/Competing interests.}} None.
\item[--] {\bf{Availability of data and material.}} The data used to
  produce figures can be made available upon
  reasonable request.
\item[--] {\bf{Code availability.}} Codes used to produce approximate
  solutions can be made available upon
  reasonable request.
\item[--] {\bf{Authors' contributions.}} N/A.
\end{itemize}
\bibliographystyle{plain}
\bibliography{references}

\begin{thebibliography}{10}

\bibitem{Bert99}
Silvia Bertoluzza.
\newblock The discrete commutator property of approximation spaces.
\newblock {\em C. R. Acad. Sci. Paris S\'{e}r. I Math.}, 329(12):1097--1102,
  1999.

\bibitem{Bo06}
Mats Boman.
\newblock Estimates for the {$L_2$}-projection onto continuous finite element
  spaces in a weighted {$L_p$}-norm.
\newblock {\em BIT}, 46(2):249--260, 2006.

\bibitem{BS08}
Susanne~C. Brenner and L.~Ridgway Scott.
\newblock {\em The mathematical theory of finite element methods}, volume~15 of
  {\em Texts in Applied Mathematics}.
\newblock Springer, New York, third edition, 2008.

\bibitem{BMS04}
F.~Brezzi, L.~D. Marini, and E.~S\"{u}li.
\newblock Discontinuous {G}alerkin methods for first-order hyperbolic problems.
\newblock {\em Math. Models Methods Appl. Sci.}, 14(12):1893--1903, 2004.

\bibitem{BOG20}
E.~{Burman}, J.~J.~J. {Gillissen}, and L.~{Oksanen}.
\newblock {Stability estimate for scalar image velocimetry}.
\newblock {\em arXiv e-prints}, page arXiv:2008.09451, August 2020.

\bibitem{BS07}
E.~Burman and B.~Stamm.
\newblock Minimal stabilization for discontinuous {G}alerkin finite element
  methods for hyperbolic problems.
\newblock {\em J. Sci. Comput.}, 33(2):183--208, 2007.

\bibitem{Bu05}
Erik Burman.
\newblock A unified analysis for conforming and nonconforming stabilized finite
  element methods using interior penalty.
\newblock {\em SIAM J. Numer. Anal.}, 43(5):2012--2033, 2005.

\bibitem{Bu09}
Erik Burman.
\newblock A posteriori error estimation for interior penalty finite element
  approximations of the advection-reaction equation.
\newblock {\em SIAM J. Numer. Anal.}, 47(5):3584--3607, 2009.

\bibitem{Bu14}
Erik Burman.
\newblock Robust error estimates in weak norms for advection dominated
  transport problems with rough data.
\newblock {\em Math. Models Methods Appl. Sci.}, 24(13):2663--2684, 2014.

\bibitem{BE07}
Erik Burman and Alexandre Ern.
\newblock Continuous interior penalty {$hp$}-finite element methods for
  advection and advection-diffusion equations.
\newblock {\em Math. Comp.}, 76(259):1119--1140, 2007.

\bibitem{BE12}
Erik Burman and Alexandre Ern.
\newblock Implicit-explicit {R}unge-{K}utta schemes and finite elements with
  symmetric stabilization for advection-diffusion equations.
\newblock {\em ESAIM Math. Model. Numer. Anal.}, 46(4):681--707, 2012.

\bibitem{BEF10}
Erik Burman, Alexandre Ern, and Miguel~A. Fern\'{a}ndez.
\newblock Explicit {R}unge-{K}utta schemes and finite elements with symmetric
  stabilization for first-order linear {PDE} systems.
\newblock {\em SIAM J. Numer. Anal.}, 48(6):2019--2042, 2010.

\bibitem{BG20}
Erik {Burman} and Johnny {Guzm\'{a}n}.
\newblock {Implicit-explicit multistep formulations for finite element
  discretisations using continuous interior penalty}.
\newblock {\em arXiv e-prints}, page arXiv:2012.05727, December 2020.

\bibitem{BGL09}
Erik Burman, Johnny Guzm\'{a}n, and Dmitriy Leykekhman.
\newblock Weighted error estimates of the continuous interior penalty method
  for singularly perturbed problems.
\newblock {\em IMA J. Numer. Anal.}, 29(2):284--314, 2009.

\bibitem{BH04}
Erik Burman and Peter Hansbo.
\newblock Edge stabilization for {G}alerkin approximations of
  convection-diffusion-reaction problems.
\newblock {\em Comput. Methods Appl. Mech. Engrg.}, 193(15-16):1437--1453,
  2004.

\bibitem{BNO20}
Erik {Burman}, Mihai {Nechita}, and Lauri {Oksanen}.
\newblock {A stabilized finite element method for inverse problems subject to
  the convection-diffusion equation. II: convection-dominated regime}.
\newblock {\em arXiv e-prints}, page arXiv:2006.13201, June 2020.

\bibitem{BQS08}
Erik Burman, Alfio Quarteroni, and Benjamin Stamm.
\newblock Stabilization strategies for high order methods for transport
  dominated problems.
\newblock {\em Boll. Unione Mat. Ital. (9)}, 1(1):57--77, 2008.

\bibitem{BQS10}
Erik Burman, Alfio Quarteroni, and Benjamin Stamm.
\newblock Interior penalty continuous and discontinuous finite element
  approximations of hyperbolic equations.
\newblock {\em J. Sci. Comput.}, 43(3):293--312, 2010.

\bibitem{CDBS20}
Rodrigo Costa~Moura, De~Castro da~Silva Andre~Fernando, Burman Erik, and
  Spencer Sherwin.
\newblock Eigenanalysis of gradient-jump penalty (gjp) stabilisation for cg.
\newblock Technical report, Affiliation: Instituto Tecnologico de Aeronautica,
  2020.
\newblock DOI: 10.13140/RG.2.2.32887.85924.

\bibitem{FGN16}
Javier de~Frutos, Bosco Garc\'{\i}a-Archilla, and Julia Novo.
\newblock Local error estimates for the {SUPG} method applied to evolutionary
  convection-reaction-diffusion equations.
\newblock {\em J. Sci. Comput.}, 66(2):528--554, 2016.

\bibitem{DD76}
Jim Douglas, Jr. and Todd Dupont.
\newblock Interior penalty procedures for elliptic and parabolic {G}alerkin
  methods.
\newblock In {\em Computing methods in applied sciences ({S}econd {I}nternat.
  {S}ympos., {V}ersailles, 1975)}, pages 207--216. Lecture Notes in Phys., Vol.
  58. 1976.

\bibitem{EJ95}
Kenneth Eriksson and Claes Johnson.
\newblock Adaptive finite element methods for parabolic problems. {II}.
  {O}ptimal error estimates in {$L_\infty L_2$} and {$L_\infty L_\infty$}.
\newblock {\em SIAM J. Numer. Anal.}, 32(3):706--740, 1995.

\bibitem{EG04}
Alexandre Ern and Jean-Luc Guermond.
\newblock {\em Theory and practice of finite elements}, volume 159 of {\em
  Applied Mathematical Sciences}.
\newblock Springer-Verlag, New York, 2004.

\bibitem{EG21}
Alexandre Ern and Jean-Luc Guermond.
\newblock {\em Finite Elements III}.
\newblock Springer, 2021.

\bibitem{GFR15}
Michelle~L. Ghrist, Bengt Fornberg, and Jonah~A. Reeger.
\newblock Stability ordinates of {A}dams predictor-corrector methods.
\newblock {\em BIT}, 55(3):733--750, 2015.

\bibitem{GS10}
V.~Girault and L.~Ridgway Scott.
\newblock On a time-dependent transport equation in a {L}ipschitz domain.
\newblock {\em SIAM J. Math. Anal.}, 42(4):1721--1731, 2010.

\bibitem{Guer01}
Jean-Luc Guermond.
\newblock Subgrid stabilization of {G}alerkin approximations of linear
  contraction semi-groups of class {$C^0$} in {H}ilbert spaces.
\newblock {\em Numer. Methods Partial Differential Equations}, 17(1):1--25,
  2001.

\bibitem{Guz06}
J.~Guzm\'{a}n.
\newblock Local analysis of discontinuous {G}alerkin methods applied to
  singularly perturbed problems.
\newblock {\em J. Numer. Math.}, 14(1):41--56, 2006.

\bibitem{He12}
F.~Hecht.
\newblock New development in {F}ree{F}em++.
\newblock {\em J. Numer. Math.}, 20(3-4):251--265, 2012.

\bibitem{HMSW99}
P.~Houston, J.~A. Mackenzie, E.~S\"{u}li, and G.~Warnecke.
\newblock A posteriori error analysis for numerical approximations of
  {F}riedrichs systems.
\newblock {\em Numer. Math.}, 82(3):433--470, 1999.

\bibitem{HV03}
Willem Hundsdorfer and Jan Verwer.
\newblock {\em Numerical solution of time-dependent
  advection-diffusion-reaction equations}, volume~33 of {\em Springer Series in
  Computational Mathematics}.
\newblock Springer-Verlag, Berlin, 2003.

\bibitem{JSW87}
C.~Johnson, A.~H. Schatz, and L.~B. Wahlbin.
\newblock Crosswind smear and pointwise errors in streamline diffusion finite
  element methods.
\newblock {\em Math. Comp.}, 49(179):25--38, 1987.

\bibitem{JNP84}
Claes Johnson, Uno N\"{a}vert, and Juhani Pitk\"{a}ranta.
\newblock Finite element methods for linear hyperbolic problems.
\newblock {\em Comput. Methods Appl. Mech. Engrg.}, 45(1-3):285--312, 1984.

\bibitem{KK00}
G-S. Karamanos and G.~E. Karniadakis.
\newblock A spectral vanishing viscosity method for large-eddy simulations.
\newblock {\em J. Comput. Phys.}, 163(1):22--50, 2000.

\bibitem{MT89}
Yvon Maday and Eitan Tadmor.
\newblock Analysis of the spectral vanishing viscosity method for periodic
  conservation laws.
\newblock {\em SIAM J. Numer. Anal.}, 26(4):854--870, 1989.

\bibitem{MAPS20}
Rodrigo~C. Moura, Mansoor Aman, Joaquim Peir\'{o}, and Spencer~J. Sherwin.
\newblock Spatial eigenanalysis of spectral/{\it hp} continuous {G}alerkin
  schemes and their stabilisation via {DG}-mimicking spectral vanishing
  viscosity for high {R}eynolds number flows.
\newblock {\em J. Comput. Phys.}, 406:109112, 20, 2020.

\bibitem{MCBS21}
Rodrigo~C Moura, Andrea Cassinelli, Erik Burman, and Spencer Sherwin.
\newblock Gradient jump penalty stabilisation of spectral/$hp$-element method
  for under-resolved simulations.
\newblock in preparation, 2021.

\bibitem{PS95}
Todd~E. Peterson and David~B. Shuster.
\newblock Non-optimal behaviour of finite element methods for first order
  hyperbolic problems.
\newblock {\em Appl. Math. Comput. Sci.}, 5(4):579--596, 1995.

\bibitem{WLZS19}
Haijin Wang, Yunxian Liu, Qiang Zhang, and Chi-Wang Shu.
\newblock Local discontinuous {G}alerkin methods with implicit-explicit
  time-marching for time-dependent incompressible fluid flow.
\newblock {\em Math. Comp.}, 88(315):91--121, 2019.

\bibitem{XSZ20}
Yuan Xu, Chi-Wang Shu, and Qiang Zhang.
\newblock Error estimate of the fourth-order {R}unge-{K}utta discontinuous
  {G}alerkin methods for linear hyperbolic equations.
\newblock {\em SIAM J. Numer. Anal.}, 58(5):2885--2914, 2020.

\bibitem{XSZW19}
Yuan Xu, Qiang Zhang, Chi-wang Shu, and Haijin Wang.
\newblock The {$\rm L^2$}-norm stability analysis of {R}unge-{K}utta
  discontinuous {G}alerkin methods for linear hyperbolic equations.
\newblock {\em SIAM J. Numer. Anal.}, 57(4):1574--1601, 2019.

\bibitem{ZS04}
Qiang Zhang and Chi-Wang Shu.
\newblock Error estimates to smooth solutions of {R}unge-{K}utta discontinuous
  {G}alerkin methods for scalar conservation laws.
\newblock {\em SIAM J. Numer. Anal.}, 42(2):641--666, 2004.

\bibitem{ZS10}
Qiang Zhang and Chi-Wang Shu.
\newblock Stability analysis and a priori error estimates of the third order
  explicit {R}unge-{K}utta discontinuous {G}alerkin method for scalar
  conservation laws.
\newblock {\em SIAM J. Numer. Anal.}, 48(3):1038--1063, 2010.

\bibitem{Zhou95}
Guo~Hui Zhou.
\newblock A local {$L^2$}-error analysis of the streamline diffusion method for
  nonstationary convection-diffusion systems.
\newblock {\em RAIRO Mod\'{e}l. Math. Anal. Num\'{e}r.}, 29(5):577--603, 1995.

\end{thebibliography}
\end{document}